%
%

\documentclass[11pt,reqno,british,a4paper,twoside]{article}


\usepackage[utf8]{inputenc}
\usepackage[sc]{mathpazo}
\usepackage{babel,mathtools,amsmath,amssymb,url,paralist,xspace,mathrsfs}
\usepackage{booktabs,microtype,array,fixltx2e,fancyhdr,amscd,csquotes,tikz}

\usetikzlibrary{matrix}

\usepackage[margin=30pt,font=small,labelfont=bf,labelsep=period]{caption}
\usepackage[style=numeric,backend=biber,maxbibnames=10]{biblatex}
\addbibresource{refs.bib}

\usepackage[amsmath,thmmarks]{ntheorem}
\usepackage[capitalise]{cleveref}
\rmfamily
\paperheight = 845.04684pt
\textheight = 595.80026pt
\paperwidth = 597.50787pt
\textwidth = 365.00153pt
\topmargin = 18.05319pt
\headsep = 25.0pt
\headheight = 13.6pt
\footskip = 30.0pt
\columnsep = 10.0pt
\marginparsep = 10.0pt
\marginparwidth = 101.3859pt
\evensidemargin = 67.23349pt
\oddsidemargin = 20.73286pt

\mathtoolsset{mathic=true}
\setdefaultenum{\upshape (i)}{}{}{}

\setlength{\bibitemsep}{0pt}

\theoremstyle{plain}
\theoremseparator{.}
\newtheorem{theorem}{Theorem}[section]
\newtheorem{proposition}[theorem]{Proposition}
\newtheorem{lemma}[theorem]{Lemma}
\newtheorem{corollary}[theorem]{Corollary}

\theorembodyfont{\normalfont}
\newtheorem{definition}[theorem]{Definition}

\theoremheaderfont{\itshape}
\theoremsymbol{\begin{small}\ensuremath{\quad\triangle}\end{small}}
\newtheorem{remark}[theorem]{Remark}
\theoremsymbol{\begin{small}\ensuremath{\quad\diamondsuit}\end{small}}
\newtheorem{example}[theorem]{Example}

\theoremstyle{nonumberplain}
\theoremheaderfont{\itshape}
\theorembodyfont{\normalfont}
\theoremsymbol{\begin{small}\ensuremath{\quad\Box}\end{small}}
\newtheorem{proof}{Proof}

\qedsymbol{\begin{small}\ensuremath{\quad\Box}\end{small}}

\numberwithin{equation}{section}
\numberwithin{table}{section}

\makeatletter
\setlength{\@fptop}{5pt}
\makeatother

\newcolumntype{C}{>{$}c<{$}}
\newcolumntype{L}{>{$}l<{$}}
\newcolumntype{R}{>{$}r<{$}}

\let\oldbibliography\thebibliography
\renewcommand{\thebibliography}[1]{%
\oldbibliography{#1}%
\small
\setlength{\itemsep}{0pt}%
\setlength{\parskip}{0pt plus 1pt}%
}

\newcommand{\Lie}[1]{\operatorname{\textsl{#1}}}
\newcommand{\lie}[1]{\operatorname{\mathfrak{#1}}}
\newcommand{\Lel}[1]{{\mathsf{#1}}}  
\newcommand{\GL}{\Lie{GL}}
\newcommand{\PSU}{\Lie{PSU}}
\newcommand{\SL}{\Lie{SL}}
\newcommand{\SO}{\Lie{SO}}
\newcommand{\Sp}{\Lie{Sp}}
\newcommand{\SU}{\Lie{SU}}

\newcommand{\Spin}{\Lie{Spin}}
\newcommand{\la}{\lie{a}}
\newcommand{\g}{\lie{g}}
\newcommand{\h}{\lie{h}}
\newcommand{\lk}{\lie{k}}
\newcommand{\n}{\lie{n}}
\newcommand{\lr}{\lie{r}}

\newcommand{\su}{\lie{su}}
\newcommand{\lt}{\lie{t}}

\newcommand{\Ld}{\mathcal L}
\newcommand{\Ldc}{(\hook\mathcal L)}
\newcommand{\Pg}[1][\g]{\mathcal P_{#1}}
\newcommand{\Pgk}{\Pg[\g,k]}
\newcommand{\dP}{d_{\mathcal P}}

\newcommand{\Hodge}{{*}}

\newcommand{\bC}{{\mathbb C}}
\newcommand{\bH}{{\mathbb H}}
\newcommand{\bR}{{\mathbb R}}
\newcommand{\bZ}{{\mathbb Z}}

\newcommand{\fX}{\lie X}

\DeclareMathOperator{\ad}{ad}
\DeclareMathOperator{\Ad}{Ad}
\DeclareMathOperator{\codim}{codim}
\DeclareMathOperator{\coker}{coker}

\DeclareMathOperator{\End}{End}

\DeclareMathOperator{\im}{im}

\DeclareMathOperator{\stab}{stab}
\DeclareMathOperator{\vol}{vol}

\newcommand{\Cyclic}{\mathop{\text{\Large$\mathfrak S$}\vrule width 0pt
depth 2pt}}

\newcommand{\hook}{{\lrcorner\,}}

\newcommand{\NB}{\nabla}
\newcommand{\LC}{\NB^{\textup{LC}}}

\newcommand{\sslash}{\mathbin{{/}\mkern-6mu{/}\mkern-2mu}}
\newcommand{\mred}[1][\nu]{\sslash_{#1}}

\DeclarePairedDelimiter{\norm}{\lVert}{\rVert}
\DeclarePairedDelimiter{\Span}{\langle}{\rangle}

\DeclarePairedDelimiter{\inpd}{\langle}{\rangle}
\newcommand{\inp}[2]{\inpd{#1,#2}}

\newcommand{\eqbreak}{\\&\qquad}

\makeatletter
\newcommand{\mythanks}{\xdef\@thefnmark{}\@footnotetext}
\makeatother

\pagestyle{fancy}
\fancyhead{}
\fancyhead[LE]{\textsc{Closed forms and multi-moment maps}}
\fancyhead[RO]{\textsc{Thomas Bruun Madsen \& Andrew Swann}}

\hyphenation{al-ge-bras com-pli-cated iso-mor-phic}

\begin{document}

\thispagestyle{empty}

\begin{center}
  \LARGE\bfseries Closed forms and multi-moment maps
\end{center}
\begin{center}
  \Large Thomas Bruun Madsen and Andrew Swann
\end{center}


\begin{abstract}
  We extend the notion of multi-moment map to geometries defined by
  closed forms of arbitrary degree.  We give fundamental existence and
  uniqueness results and discuss a number of essential examples,
  including geometries related to special holonomy.  For forms of
  degree four, multi-moment maps are guaranteed to exist and are
  unique when the symmetry group is \( (3,4) \)-trivial, meaning that
  the group is connected and the third and fourth Lie algebra Betti
  numbers vanish.  We give a structural description of some classes of
  \( (3,4) \)-trivial algebras and provide a number of examples.
\end{abstract}

\bigskip
\begin{center}
  \begin{minipage}{0.8\linewidth}
    \begin{small}
      \tableofcontents
    \end{small}
  \end{minipage}
\end{center}

\bigskip
\begin{small}\noindent
  2010 Mathematics Subject Classification: Primary 53C15; Secondary
  22E25, 53C29, 53C30, 53C55, 53D20, 70G45.
\end{small}
\newpage

\section{Introduction}
\label{sec:introduction}

The rich and varied field of symplectic geometry is the study of
closed non-degenerate two-forms.  It has origins in the study of
Hamiltonian dynamics and the geometry of phase space.  From a
mathematical point of view it is natural to try to see how much of
this theory may be extended to closed forms of higher degree.  A
number of authors have already made attempts at generalising the
Hamiltonian picture to higher-degree, or multi-, phase spaces, often
motivated by the interest in various field theories
\cite{Carinena-CGM:reduction,Carinena-CI:multisymplectic,Gotay-IMM:covariant,Baez-HR:string,Baez-R:2-algebra}.
Indeed string- and \( M \)-theories with fluxes give a number of
geometries equipped with closed differential forms of varying degrees,
see \cite{Gran-G-P:blackh5fI} for one such example.

The purpose of this article is to study the geometry of closed
differential forms in general, with particular emphasis on new
techniques that are available in the presence of symmetry.  One main
tool in the construction of various symplectic manifolds is the
Marsden-Weinstein quotient formed by taking quotients of the level
sets of a moment map.  One important feature of the moment map in
symplectic geometry, is that it takes values in a finite-dimensional
vector space depending only on the symmetry group and not on the
underlying manifold.  Previous attempts to extend moment maps to forms
of higher degrees, have produced maps taking values in
infinite-dimensional spaces of forms over the manifold, see the
references above, though \cite{Neeb-C:Hamiltonian} provides an
interesting alternative.  In \cite{Madsen-S:2-3-trivial} we
introduced a new notion of multi-moment map for geometries with a
closed three-form, which shares the above basic property of symplectic
moment maps.  A thorough study of these new maps was made
in~\cite{Madsen-S:multi-moment}.  In this paper we will show how this
theory extends to forms of arbitrary degree, in large part based on
ideas developed in the thesis \cite{Madsen:mm-thesis}.  Not only do
these multi-moment maps take values in a finite-dimensional vector
space, but there are existence results based on easily satisfied
properties of the manifold or its symmetry group.  We will thus
describe the general theory, give examples of multi-moment reduction
of various geometries, particular ones with a closed four-form, and
study an algebraic condition on Lie groups that guarantees existence
and uniqueness of multi-moment maps for four-form geometries.

One salient feature of symplectic geometry is that the two-form is
non-degenerate.  What this means for a form of higher degree is less
clear and we start the paper in \cref{sec:distinguished} by discussing
a number of different possibilities.  These distinguish a number of
geometries that have importance in their own right, for example
geometries with exceptional holonomy, but do not lead to any one good
constraint, so for the general theory we do not impose such
assumptions.

In \cref{sec:multi-moment} we then introduce the notion of
multi-moment map for symmetries of closed geometries of arbitrary
degree.  In order to facilitate the proofs we develop some theory of
multi-vectors on manifolds and in particular give an extension of the
classical Cartan formula which expresses the Lie derivative of forms
in terms of exterior derivatives and contractions.  Multi-moment maps
are then defined, and existence and uniqueness theorems proved under
topological and under algebraic assumptions.
\Cref{sec:example-geometries} then gives a number of examples of
closed geometries, computes multi-moment maps in a number of cases and
discusses the geometries of quotients.  Finally, in
\cref{sec:cohomology} we study the algebraic condition found in
\cref{sec:symmetries} for the existence and uniqueness of multi-moment
maps for geometries with a closed four-form.  These conditions are
expressed as the vanishing of the third and fourth Lie algebra
cohomology groups.  We show how to exploit the Hochschild-Serre
spectral sequence to determine the algebraic structure of a wide class
of such Lie algebras and give a number of examples.

\paragraph*{Acknowledgements}
We gratefully acknowledge financial support from \textsc{ctqm},
\textsc{geomaps} and \textsc{opalgtopgeo}.  AFS is also partially
supported by the Danish Council for Independent Research, Natural
Sciences, Symmetry Techniques in Differential Geometry and by the
Ministry of Science and Innovation, Spain, under Project
\textsc{mtm}{\small 2008-01386}.  AFS thanks the organisers of the
\textsc{gesta} meeting 2011 for a stimulating event and the
opportunity to present aspects of this material.

\section{Distinguished differential forms}
\label{sec:distinguished}

We will be considering geometries defined by closed differential
forms.  So as a first question we address the issue of whether there
are any algebraically distinguished forms on a vector space.  Recall
that in symplectic geometry one makes repeated use of the
\enquote{non-degeneracy} of the symplectic two-form~\( \omega \).
Algebraically this leads to the fact that a symplectic manifold is of
even dimension and then the closure of \( \omega \) is used for
Darboux's Theorem, that there are local coordinates so that \( \omega
= dx_1\wedge dy_1 +\dots +dx_n\wedge dy_n \).  For higher degree
forms, the situation is not simple and it is not clear which
definition is appropriate.  Let us discuss some of the possibilities.

Let \( V \) be an \( n \)-dimensional vector space over \( \bR \).
Write \( \Lambda^*V^* \) for the algebra of forms on \( V \).

\begin{definition}
  A form \( \alpha \in \Lambda^rV^* \) is said to be \emph{fully
  non-degenerate} if
  \begin{equation*}
    \alpha(v_1,v_2,\dots,v_{r-1},\cdot) 
  \end{equation*}
  is non-zero whenever \( v_1,\dots,v_{r-1} \in V \) are linearly
  independent.
\end{definition}

For \( r=2 \), this is the usual non-degeneracy of a two-form.  For
any two-form \( \alpha \) there is a basis of \( V^* \) such that
\begin{equation}
  \label{eq:deg2}
  \alpha = e_1\wedge e_2 + \dots + e_{2k-1} \wedge e_{2k},
\end{equation}
for some \( k \leqslant \tfrac12\dim V \).  To see this start with a
non-zero vector \( X \in V \) and put \( e_2 = \alpha(X,\cdot) \).
Choose a vector \( Y \) such that \( e_2(Y) = 1 \) then choose a
one-form \( e_1 \) with \( e_1(X) = 1 \) and \( e_2(Y) = 0 \).  We now
have \( \alpha' = \alpha - e_1\wedge e_2 \) is zero on \( X \) and \(
Y \), and the result follows by induction.  We see that \( \alpha \)
is non-degenerate if and only if \( \dim V = 2k \).

For forms of degree \( 3 \), full non-degeneracy already gives much
stronger restrictions.

\begin{theorem}
  \label{thm:fully}
  A vector space of dimension \( n \) admits a fully non-degenerate of
  form of degree \( r \geqslant 3 \) if and only if \( r=n \) or the
  pair \( (r,n) \) is either \( (3,7) \) or \( (4,8) \).
\end{theorem}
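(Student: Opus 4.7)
\emph{Sufficiency.} The three listed cases admit explicit examples. For $r=n$, any nonzero top-degree form is fully non-degenerate: contracting with $n-1$ linearly independent vectors yields a 1-form that vanishes precisely on their span. For $(r,n)=(3,7)$, the associative 3-form $\varphi(x,y,z)=\langle xy,z\rangle$ on $\mathrm{Im}\,\mathbb{O}\cong\bR^7$ works; and for $(4,8)$, the Cayley 4-form on $\mathbb{O}\cong\bR^8$. In both octonionic cases full non-degeneracy reflects the absence of zero-divisors in $\mathbb{O}$.

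\emph{Two key reductions.} Suppose $\alpha\in\Lambda^rV^*$ is fully non-degenerate with $3\leqslant r<n$. First, contracting $\alpha$ with any $r-2$ linearly independent vectors produces a 2-form whose kernel is precisely their span, hence a symplectic form on the $(n-r+2)$-dimensional quotient; so $n-r$ is even. Second, for any $v\neq 0$, the contraction $\iota_v\alpha$ vanishes on $v$ and descends to a form $\bar\alpha_v\in\Lambda^{r-1}(V/\bR v)^*$ which is again fully non-degenerate, as one checks by lifting linearly independent tuples and invoking full non-degeneracy of $\alpha$ on the extended tuple through $v$.

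\emph{Induction on $r$ and the pair $(5,9)$.} If the classification is known in degree $r-1$, the second reduction forces $(r-1,n-1)\in\{(3,7),(4,8)\}$ for any candidate counterexample with $r<n$, leaving only $(r,n)\in\{(4,8),(5,9)\}$ at $r=4,5$ and nothing new beyond. The pair $(4,8)$ is the Cayley case. To rule out $(5,9)$, set $\omega_v=\iota_v\alpha\in\Lambda^4V^*$; then $\iota_v\omega_v=0$ together with the graded Leibniz rule give
\begin{equation*}
  \iota_v(\alpha\wedge\omega_v)=\omega_v\wedge\omega_v.
\end{equation*}
By the degree-4 classification just established, $\bar\alpha_v$ is a Cayley form on $V/\bR v$, whose wedge square is a non-zero top form; hence $\omega_v\wedge\omega_v\neq 0$. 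But $\alpha\wedge\omega_v=h(v)\cdot\vol$ with $h$ linear in $v$, so $h$ would have to be a linear form on $V$ vanishing only at the origin, which is impossible for $\dim V\geqslant 2$.

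\emph{Base case $r=3$ and main obstacle.} The crux is $r=3$, where $n$ is odd. For $v\neq 0$, the 2-form $\omega=\iota_v\alpha$ has rank $n-1$, and the same Leibniz trick yields
\begin{equation*}
  \iota_v\bigl(\alpha\wedge\omega^{\wedge(n-3)/2}\bigr)=\omega^{\wedge(n-1)/2}\neq 0,
\end{equation*}
so the polynomial $g$ defined by $\alpha\wedge\omega^{\wedge(n-3)/2}=g(v)\cdot\vol$ vanishes nowhere on $V\setminus\{0\}$. For $n\equiv 1\pmod 4$, the degree $(n-3)/2$ is odd, so $g|_{S^{n-1}}$ is an odd continuous function and Borsuk--Ulam produces a zero, excluding $n\in\{5,9,13,\dots\}$. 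The remaining dimensions $n\equiv 3\pmod 4$ with $n\geqslant 11$ form the principal obstacle: after fixing an inner product on $V$, the alternating bilinear map $(u,w)\mapsto\alpha(u,w,\cdot)^\sharp$ is a vector cross product on $V$ (orthogonal to its inputs and non-zero precisely on linearly independent pairs), and the classical Brown--Gray classification of such cross products --- resting ultimately on Adams' theorem on parallelisable spheres --- forces $n\in\{3,7\}$. The essential conceptual difficulty lies in reducing full non-degeneracy of a three-form to this classical topological obstruction; once this is in place, the inductive observations above complete the proof.
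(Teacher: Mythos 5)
Your overall architecture differs substantially from the paper's. The paper's proof is a single reduction: an inner product turns a fully non-degenerate \( r \)-form into an \( (r-1) \)-fold cross product, whose normalisation is a section of the Stiefel fibration \( V_{r,n}\to V_{r-1,n} \), and Whitehead's theorem \cite{Whitehead:Stiefel} says such sections exist only in the listed cases --- this handles every degree at once. You instead induct on \( r \) via the quotient \( V/\bR v \); that reduction, the parity observation \( n\equiv r\pmod 2 \), and the odd-degree argument killing \( n\equiv 1\pmod 4 \) when \( r=3 \) are all correct and appealingly elementary. But the induction leaves you owing two separate debts, and both payments have problems.

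First, the exclusion of \( (5,9) \): you assert that \( \bar\alpha_v \) \enquote{is a Cayley form} on \( V/\bR v\cong\bR^8 \), hence has non-zero wedge square. But \Cref{thm:fully} is an existence statement, not a uniqueness statement, and the uniqueness claim is in fact false: full non-degeneracy is an open condition, so the fully non-degenerate four-forms fill an open subset of the \( 70 \)-dimensional space \( \Lambda^4(\bR^8)^* \), whereas the \( \GL(8,\bR) \)-orbit of \( \Phi_0 \) has dimension only \( 43 \). So \( \bar\alpha_v \) need not be linearly equivalent to \( \Phi_0 \), and the inequality \( \omega_v\wedge\omega_v\ne 0 \) that your argument hinges on is unproved. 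Second, the residual case \( r=3 \), \( n\equiv 3\pmod 4 \), \( n\geqslant 11 \), which you yourself flag as the crux: the Brown--Gray classification is an \emph{algebraic} classification of cross products satisfying the norm axiom \( \norm{x\times y}^2=\norm{x}^2\norm{y}^2-\inp{x}{y}^2 \), which the product \( (u,w)\mapsto\alpha(u,w,\cdot)^\sharp \) does not satisfy; only orthogonality and non-vanishing survive, and for those one needs precisely the topological statement about sections of \( V_{3,n}\to V_{2,n} \) (Whitehead, or Massey's elementary account \cite{Massey:cross-products}). Once that topological input is imported, the same statement for \( V_{r,n}\to V_{r-1,n} \) disposes of all degrees simultaneously --- which is the paper's proof --- so the inductive superstructure, though largely sound, does not in the end yield a more elementary route.
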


\begin{proof}
  Choose an inner product \( \inp\cdot\cdot \) on \( V \).  A form \(
  \alpha \in \Lambda^rV^* \) defines a cross-product like operation \(
  V^{r-1} \to V \) via
  \begin{equation*}
    \inp{v_1 \times v_2 \times \dots \times v_{r-1}}{w} =
    \alpha(v_1,v_2,\dots,v_{r-1},w).
  \end{equation*}
  This operation is continuous and has the property that the product
  \( v_1\times\dots\times v_{r-1} \) is orthogonal to each of the \(
  v_i \).  When \( \alpha \) is fully non-degenerate, this product on
  linearly independent vectors is non-zero.  Let \( V_{r,n} \) denote
  the Stiefel manifold consisting of all \( r \)-tuples \(
  (f_1,\dots,f_r) \) of orthonormal vectors in \( \bR^n \).  The map
  \begin{equation*}
    (f_1,\dots,f_{r-1}) \mapsto \left(f_1, \dots, f_{r-1}, \frac{f_1
      \times\dots\times f_{r-1}}{\norm{f_1 \times\dots\times
      f_{r-1}}}\right)   
  \end{equation*}
  is a cross section of the projection \( V_{r,n} \to V_{r-1,n} \).
  It is a topological result of \textcite{Whitehead:Stiefel} that such
  a cross-section exists only in the given cases.  An elementary proof
  for the case of two-fold cross-products, \( r=3 \), may be found in
  \cite{Massey:cross-products}.
\end{proof}

For \( r=n \), a volume form on \( V \) provides a fully
non-degenerate form.  Examples for the other two cases of this result
are given by the three-forms
\begin{equation}
  \label{eq:G2}
  \phi_0 = e_{123} + e_{145} + e_{167} + e_{246} - e_{257} - e_{347}
  - e_{356}
\end{equation}
on \( \bR^7 \) and the four-form
\begin{equation}
  \label{eq:Spin7}
  \begin{split}
    \Phi_0 &= e_{1234} + e_{1256} + e_{3478} + e_{3456} + e_{1278} +
    e_{1357} - e_{1368} \eqbreak - e_{2457} + e_{2468} - e_{1458} -
    e_{1467} - e_{2358} - e_{2367} + e_{5678}
  \end{split}
\end{equation}
on \( \bR^8 \).  Here \( e_1,\dots,e_n \) is a basis for \( (\bR^n)^*
\) and wedge products have been omitted from the notation, so \(
e_{123} = e_1\wedge e_2\wedge e_3 \), etc.

The forms \( \phi_0 \) and \( \Phi_0 \) have interesting geometric
properties.  In particular, if we consider the action of \( \GL(n,\bR)
\) then the isotropy groups \( \{ g\in\GL(n,\bR) : g\cdot \alpha =
\alpha \} \) are the compact 14-dimensional exceptional Lie group \(
G_2 \) for \( \alpha = \phi_0 \) and the compact 21-dimensional group
\( \Spin(7) \), the simply-connected double cover of \( \SO(7) \), for
\( \alpha = \Phi_0 \), see \textcite{Bryant:exceptional}.  We now see
that the dimensions of the orbits of these forms are
\begin{gather*}
  \dim(\GL(7,\bR)\cdot\phi_0) = 49 - 14 = 35,\\
  \dim(\GL(8,\bR)\cdot\Phi_0) = 64 - 21 = 43.
\end{gather*}
The first of these is notable since \( \dim\Lambda^3\bR^7 = 35 \), so
the orbit of \( \phi_0 \) in \( \Lambda^3\bR^7 \) is open.

\begin{definition}[\textcite{Hitchin:forms}]
  \label{def:stable}
  A form \( \alpha \in \Lambda^rV^* \) is \emph{stable} if the orbit
  \( {\GL(V)\cdot\alpha} \) is open in \( \Lambda^rV^* \).
\end{definition}

For general forms the condition of stability provides restrictions on
the dimension of \( V \).

\begin{proposition}
  A vector space of dimension \( n \) admits a stable form of degree
  \( r \) if and only if either \( r\in\{1,2,n-2,n-1,n\} \) or \( r
  \in \{3,n-3\} \) with \( n \in \{6,7,8\} \).
\end{proposition}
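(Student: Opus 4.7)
My plan splits into a necessity step (dimension count) and a sufficiency step (construction of examples).

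\textbf{Necessity.} If \( \alpha \in \Lambda^r V^* \) is stable, the orbit map yields
\[
  \binom{n}{r} = \dim\Lambda^r V^* = \dim\GL(V) - \dim\stab(\alpha) \leqslant n^2.
\]
For \( r \in \{1, 2, n-2, n-1, n\} \) this inequality is automatic. For \( 3 \leqslant r \leqslant n - 3 \) we have \( \binom{n}{r} \geqslant \binom{n}{3} = n(n-1)(n-2)/6 \), and \( \binom{n}{3} \leqslant n^2 \) simplifies to \( n^2 - 9n + 2 \leqslant 0 \), hence \( n \leqslant 8 \). For \( n = 8 \) the bound \( \binom{8}{r} \leqslant 64 \) rules out \( r = 4 \) (since \( \binom{8}{4} = 70 \)), leaving only \( r \in \{3, 5\} = \{3, n - 3\} \). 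Thus middle-degree stability is possible only when \( n \in \{6, 7, 8\} \) and \( r \in \{3, n - 3\} \).

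\textbf{Sufficiency.} For \( r = 1 \) or \( r = n \) any non-zero form has open orbit. For \( r = 2 \) the normal form \eqref{eq:deg2} shows that two-forms of maximal rank form a single open \( \GL(V) \)-orbit. The perfect pairing \( \Lambda^r V^* \otimes \Lambda^{n-r} V^* \to \Lambda^n V^* \) identifies \( \Lambda^r V^* \) with \( (\Lambda^{n-r} V^*)^* \otimes \Lambda^n V^* \) as \( \GL(V) \)-modules; since dualising and twisting by a character both preserve open orbits, stability in degree \( r \) is equivalent to stability in degree \( n - r \), which reduces \( r \in \{n-2, n-1\} \) to \( r \in \{1, 2\} \) and \( r = n - 3 \) to \( r = 3 \). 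For \( (r, n) = (3, 7) \) the form \( \phi_0 \) of \eqref{eq:G2} is stable by the computation \( 49 - 14 = 35 = \binom{7}{3} \) already recorded in the excerpt. For \( (3, 6) \) the three-form \( \re(dz_1 \wedge dz_2 \wedge dz_3) \) on \( \bC^3 = \bR^6 \) has stabilizer \( \SL(3, \bC) \subset \GL(6, \bR) \) of real dimension \( 16 \), so the orbit has dimension \( 36 - 16 = 20 = \binom{6}{3} \). For \( (3, 8) \) one invokes the classical existence of a stable three-form whose \( \GL(8, \bR) \)-stabilizer has dimension \( 8 \), yielding orbit dimension \( 64 - 8 = 56 = \binom{8}{3} \).

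\textbf{Main obstacle.} The most delicate step is the \( (3, 8) \) case: exhibiting the stable three-form and computing its stabilizer rests on the classical but non-trivial classification of \( \GL(8, \bR) \)-orbits on \( \Lambda^3(\bR^8)^* \), which I would cite rather than reprove. The \( (3, 6) \) case is by now standard after Hitchin, and the duality argument connecting \( r \) to \( n - r \) is routine once the correct equivariance of the top-form pairing is formalised.
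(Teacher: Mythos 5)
Your argument is correct and follows essentially the same route as the paper: the orbit-dimension bound \( \binom nr \leqslant n^2 \) for necessity (checked at \( r=3 \) and at \( (r,n)=(4,8) \)), explicit stable forms in degrees \( 1,2,3 \) for sufficiency, and a duality reducing degree \( n-r \) to degree \( r \) (the paper uses a Hodge star where you use the canonical pairing into \( \Lambda^nV^* \) — a cosmetic difference). The only place you defer to a citation is the \( (3,8) \) case, which the paper settles by exhibiting the form explicitly as the Cartan three-form \( \rho_0 \) of \( \su(3) \) in~\eqref{eq:PSU3}, with infinitesimal stabiliser \( \su(3) \) and hence orbit dimension \( 64-8=56=\binom 83 \).
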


\begin{proof}
  We give the basic arguments, following \textcite{Hitchin:forms}.

  The dimension of the orbit \( \GL(n,\bR)\cdot\alpha \) is at most \(
  \dim(\GL(n,\bR)) = n^2 \).  To have a stable form we thus need \(
  n^2 \geqslant \dim\Lambda^r\bR^n = \binom nr \).  The binomial
  coefficient \( \binom nr \) is a polynomial of degree \( r \) in \(
  n \), which for \( 3 \leqslant r \leqslant n-3 \) grows quicker than
  \( n^2 \).  Now for \( r<n/2 \), we have \( \dim \Lambda^r\bR^n < \dim
  \Lambda^{r+1}\bR^n \), so we start by considering the case \( r = 3
  \).  In this case, we see that
  \begin{equation*}
    \begin{split}
      \dim\Lambda^3\bR^n - \dim(\GL(n,\bR))
      &= \tfrac16n(n-1)(n-2) - n^2 \\
      &= \tfrac16 n((n-9)n + 2)
    \end{split}
  \end{equation*}
  so an orbit in \( \Lambda^3\bR^n \) can not be open if \( n
  \geqslant 9 \).  In dimension \( n = 8 \), we have \(
  \dim\Lambda^3\bR^8 < 64 = \dim\GL(8,\bR)\), but \(
  \dim\Lambda^4\bR^8 = 70 > 64 \), so orbits of four-forms on \( \bR^8
  \) are never open.  This gives the list of possible \( r \) and \( n
  \) in the statement.

  It remains to show that each case can be realised.  For \( r=1,n-1,n
  \), we take \( \alpha \) to be any non-zero form of the given
  degree.  For \( r=2 \), open orbits are realised by forms as in
  \eqref{eq:deg2} with \( k = \lfloor n/2 \rfloor \).  Taking the
  Hodge star of such a two-form gives a stable form of degree \( n-2
  \).

  Finally, we need to give appropriate three-forms in dimensions \( 6
  \), \( 7 \) and \( 8 \); the case for \( r = n-3 \) will then follow
  by taking Hodge stars.  For dimension \( n = 6 \), one can take \(
  \alpha \) to be the real part of a complex volume form on \( \bR^6 =
  \bC^3 \).  We have already seen \( \phi_0 \) \eqref{eq:G2} is stable
  on \( \bR^7 \).  Finally for \( n=8 \), one identifies \( \bR^8 \)
  with the Lie algebra \( \su(3) \).  This carries an \( \ad
  \)-invariant three-form \( \alpha(X,Y,Z) = \inp{[X,Y]}Z \), which in
  an appropriate basis is
  \begin{equation}
    \label{eq:PSU3}
    \begin{split}
      \rho_0 = e_{123} &+ \frac12 e_1(e_{47}-e_{56})+\frac12
      e_2(e_{46}+e_{57}) \eqbreak + \frac12 e_3(e_{45}-e_{67}) +
      \frac{\sqrt3}2 e_8(e_{45}+e_{67}).
    \end{split}
  \end{equation}
  The infinitesimal stabiliser of this form is \( \su(3) \) and so the
  orbit of \( \rho_0 \) has dimension \( 64 - 8 = 56 =
  \dim\Lambda^3\bR^8 \) and is open.
\end{proof}

\noindent
Note that the connected subgroup of \( \GL(8,\bR) \) preserving \(
\rho_0 \) is~\( \PSU(3) \): the quotient of \( \SU(3) \) by its centre
\( \bZ/3 \).

So far we have considered two strong conditions on forms and found
them to be rather restrictive.  There is another condition that is
useful more generally.

\begin{definition}
  A form \( \alpha \) on \( V \) is \emph{(weakly) non-degenerate} if
  \begin{equation*}
    v\hook \alpha = \alpha(v,\cdot,\dots,\cdot)
  \end{equation*}
  is non-zero for each non-zero \( v \).
\end{definition}

Any non-zero form \( \alpha \) gives rise to a non-degenerate form on
the quotient \( V/\ker\alpha \) where \( \ker\alpha = \{v\in V:
v\hook\alpha = 0\} \).  Conversely a volume form always provides a
non-degenerate form on any vector space.  For a particular degree of
form there can be restrictions on the dimension.  For two-forms weak
and full non-degeneracy are the same and the space must be
even-dimensional.  In higher degree we have far fewer restrictions.

\begin{proposition}
  \label{prop:non-degenerate}
  A vector space of dimension \( n \) admits a non-degenerate form of
  degree \( r \) with \( r \geqslant 3 \) if and only if \( n
  \geqslant r \) and \( n \ne r+1 \).
\end{proposition}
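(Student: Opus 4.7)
\emph{Plan.} I will split the result into necessity and sufficiency. For necessity, $n\geqslant r$ is forced since $\Lambda^rV^*=0$ otherwise, and to rule out $n=r+1$ I will fix a volume form $\Omega\in\Lambda^{r+1}V^*$ and exploit that the map $X\mapsto X\hook\Omega$ is a linear isomorphism $V\to\Lambda^rV^*$ by a dimension count. Writing a nonzero $r$-form as $\alpha=X\hook\Omega$ with $X\ne 0$, the antiderivation identity $X\hook(X\hook\Omega)=0$ forces $X\in\ker\alpha$, showing $\alpha$ is degenerate.

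For sufficiency I will induct on $r\geqslant 3$. The core mechanism is an elementary \emph{direct sum lemma}: if $\alpha$ on $V$ and $\beta$ on $W$ are non-degenerate $r$-forms, then $\alpha+\beta$, viewed on $V\oplus W$, is non-degenerate, since $(v+w)\hook(\alpha+\beta)=v\hook\alpha+w\hook\beta$ has its two summands in the disjoint subspaces $\Lambda^{r-1}V^*$ and $\Lambda^{r-1}W^*$ of $\Lambda^{r-1}(V\oplus W)^*$.

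The base case $r=3$ requires non-degenerate three-forms on $\bR^n$ for every $n\geqslant 3$, $n\ne 4$. I will give explicit formulas for small dimensions: $e_{123}$ for $n=3$, $e_{123}+e_{145}$ for $n=5$, $e_{123}+e_{456}$ for $n=6$, and the fully non-degenerate form $\phi_0$ of~\eqref{eq:G2} for $n=7$ (cf.~\cref{thm:fully}). For $n\geqslant 8$ a strong induction on $n$ splits $\bR^n=\bR^3\oplus\bR^{n-3}$ and applies the direct sum lemma, the hypothesis being available because $n-3\geqslant 5\ne 4$.

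For the induction step $r\rightsquigarrow r+1$, the dimension $n=r+1$ is handled by the volume form, while $n=r+2$ is already excluded by necessity. For $n\geqslant r+3$ the inductive hypothesis (applied with $m=n-1\geqslant r+2\neq r+1$) supplies a non-degenerate $r$-form $\beta$ on $\Span{e_1,\dots,e_{n-1}}$, and I will set $\alpha=e_n\wedge\beta$. Decomposing $v=ce_n+v'$ with $v'\in\Span{e_1,\dots,e_{n-1}}$ gives
\[
v\hook\alpha = c\,\beta - e_n\wedge(v'\hook\beta),
\]
whose two summands lie in disjoint bi-degree components; vanishing forces $c=0$ and then $v'=0$ by non-degeneracy of $\beta$. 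The only point I expect to be even mildly delicate is the base case at $n=7$, where no decomposition into smaller admissible pieces is available and one must invoke the exceptional form $\phi_0$; every remaining step is a routine separation-of-bi-degrees computation.
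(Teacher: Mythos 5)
Your proof is correct and follows essentially the same route as the paper: the same contraction-with-a-volume-form argument rules out \( n=r+1 \) (the paper phrases it as Hodge duality with one-forms), and the same two building blocks --- wedging a non-degenerate form with a new one-form for the degree induction, and direct sums such as \( \bR^3\oplus\bR^{n-3} \) for the base case --- produce all the required examples. One small remark: your claim that the exceptional form \( \phi_0 \) \emph{must} be invoked at \( n=7 \) is not quite right (though harmless), since the paper simply takes \( \omega\wedge e_7 \) with \( \omega \) a non-degenerate two-form on \( \bR^6 \), so \( n=7 \) needs no treatment beyond the general odd-dimensional case.
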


\begin{proof}
  For \( n < r \), we have \( \Lambda^rV^*=\{0\} \), so all \( r
  \)-forms are zero and thus degenerate.  For \( n = r+1 \), any form
  of degree \( r \) is the Hodge dual of a one-form and so has the
  form \( \alpha = e_2\wedge\dots\wedge e_n \), which is degenerate.

  To demonstrate existence of non-degenerate forms in the remaining
  cases, first consider \( r=3 \).  If \( n \geqslant 3 \) is odd, let
  \( \omega \) be a non-degenerate two-form on \( \bR^{n-1} \), then
  \( \alpha = \omega \wedge e_n \) is a non-degenerate three-form on
  \( \bR^n \).  If \( n \geqslant 6 \) is even, then writing \( \bR^n
  = \bR^3 \oplus \bR^{n-3} \) we have a non-degenerate three-form
  given by \( \alpha = e_{123} + \alpha' \), where \( e_{123} \) is a
  volume form on \( \bR^3 \) and \( \alpha' \) is non-degenerate on \(
  \bR^{n-3} \).

  Now for \( r>3 \), given a non-degenerate form \( \alpha' \) of
  degree \( r-1 \) on \( \bR^{n-1} \) we have that \( \alpha = \alpha'
  \wedge e_n \) is non-degenerate of degree \( r \) on \( \bR^n \).
\end{proof}

\section{Multi-moment maps in general degree}
\label{sec:multi-moment}

The general situation we wish to study is where a symmetry group \( G
\) acts on a manifold \( M \) preserving some closed form.

\begin{definition}
  For \( r\geqslant 2 \), a \emph{closed geometry of degree \( r \)}
  on a manifold \( M \) is choice of a closed differential form \(
  \alpha \in \Omega^r(M) \).
\end{definition}

Here \( \alpha \) closed means \( d\alpha = 0 \) in the exterior
algebra.  This implies that if \( \ker\alpha \) has constant dimension
then \( \mathcal D = \ker\alpha \) is integrable as a distribution.
It follows that \( \alpha \) induces a weakly non-degenerate closed
form on \( M/\mathcal D \) when this quotient is a manifold.  In
general we will not make any non-degeneracy assumptions on \( \alpha
\).  However, when needed, the following terminology will be useful.

\begin{definition}[\textcite{Baez-HR:string}]
  A \emph{\( k \)-plectic structure} is a closed geometry \(
  (M,\alpha) \) of degree \( r = k+1 \) with \( \alpha \) (weakly)
  non-degenerate.
\end{definition}

It is easy to give a couple of elementary examples.  Firstly, if \( M
= \bR^n \) is a vector space, then any constant coefficient form \(
\alpha \) on \( M \) is closed, and the discussion of the previous
section gives many \( k \)-plectic examples.  Of particular importance
are the forms \( \phi_0 \) \eqref{eq:G2}, on \( \bR^7 \), \( \Phi_0
\) \eqref{eq:Spin7} and \( \rho_0 \) \eqref{eq:PSU3}, both on \( \bR^8
\).

\begin{example}[Multi-phase space]
  \label{ex:multi-phase}
  Given any manifold \( N \) we may consider \( M = \Lambda^kT^*N
  \).  This carries a tautological form \( \beta \in \Omega^k(M) \)
  given by
  \begin{equation*}
    \beta_b(X_1,\dots,X_k) = b(\pi_*X_1,\dots,\pi_*X_k),
  \end{equation*}
  where \( \pi\colon M = \Lambda^kT^*N \to N \) is the bundle
  projection.  Defining
  \begin{equation*}
    \alpha = d\beta,
  \end{equation*}
  we get a closed \( (k+1) \)-form on \( M \) which turns out to be
  non-degenerate so \( (M,\alpha) \) is \( k \)-plectic.  To see this,
  choose local coordinates \( q^1,\dots,q^n \) on \( U \subseteq N \)
  and note that \( dq^I = dq^{i_1} \wedge \dots \wedge dq^{i_{r-1}}\)
  gives a basis for each fibre of \( \Lambda^kT^*U \subset M \).
  Let \( p_I \) be the corresponding fibre coordinates, then
  \begin{equation*}
    \beta = \sum_I p_Idq^I,\quad \alpha = \sum_I dp_I \wedge dq^I,
  \end{equation*}
  and non-degeneracy of \( \alpha \) is manifest.  This is the higher
  degree generalisation of the usual symplectic structure on phase
  space \( T^*N \).  It is the central example in mechanics and field
  theory based approaches to the geometry of closed forms.
\end{example}

In order to build multi-moment maps we need to construct closed
one-forms out of the group action and the closed \( r \)-form \(
\alpha \).  This involves contracting \( \alpha \) with linear
combinations of \( r-1 \) vector fields.  Here it is convenient to use
the notion of multi-vectors.

\subsection{Calculus of multi-vectors}
\label{sec:calculus}

Recall that a multi-vector \( p \) of degree \( s \) on \( M \) is a
sum \( p = \sum_{i=1}^k q_i \) of simple multi-vectors of the form
\begin{equation}
  \label{eq:qq}
  q = X_1 \wedge X_2 \wedge \dots \wedge X_s
\end{equation}
with \( X_j \) smooth vector fields on \( M \).  We will use
\begin{equation*}
  \fX^s(M) = \Gamma(\Lambda^sTM) 
\end{equation*}
to denote the space of degree \( s \) multi-vectors on~\( M \).  This
is dual to the space \( \Omega^s(M) \) of differential forms of the
same degree.  We write \( \hook \) for the partial evaluation map \(
\fX^s(M) \times \Omega^r(M) \to \Omega^{r-s}(M) \),
\begin{equation*}
  (q \hook \beta)(Y_1,\dots,Y_{r-s}) =
  \beta(X_1,X_2,\dots,X_s,Y_1,\dots,Y_{r-s}).
\end{equation*}

When we consider symmetries, we will have use for a generalisation of
Cartan's formula \( \Ld_X\alpha = d(X\hook\alpha) + X\hook d\alpha \)
for the Lie derivative.  To this end note that one may regard \(
\fX(M) \) as a vector space over \( \bR \) and form the exterior
powers \( \Lambda_{\bR}^s\fX(M) \).  These spaces are larger than \(
\fX^s(M) \) which is equal to the exterior product of \( \fX(M) \)
over \( C^\infty(M) \).  There is a natural \( \bR \)-linear
projection \( \Lambda_{\bR}^s\fX(M) \to \fX^s(M) \) given on
decomposable elements by
\begin{equation}
  \label{eq:Qq}
  Q = X_1 \curlywedge \dots \curlywedge X_s
  \mapsto q = X_1 \wedge \dots \wedge X_s,
\end{equation}
where \( \curlywedge \) denotes the wedge product over \( \bR \).  For
a such a \( Q \), we write
\begin{equation*}
  Q_{\wedge i} = (-1)^{i-1} X_1\curlywedge \dots \curlywedge
  \widehat{X_i} \curlywedge \dots \curlywedge X_s
\end{equation*}
and \( Q_{\wedge ij} = (Q_{\wedge i})_{\wedge j} \).  We define
\begin{gather}
  \label{eq:Ldc}
  \Ldc_Q\alpha = \sum_{i=1}^s Q_{\wedge i} \hook \Ld_{X_i}\alpha
  \quad\text{and}\quad \\
  \label{eq:L}
  L(Q) = \sum_{1\leqslant i < j\leqslant s} [X_i,X_j] \curlywedge
  Q_{\wedge ij},
\end{gather}
and extend \( \bR \)-linearly to \( \Lambda_\bR^s\fX(M) \).

\begin{lemma}[Extended Cartan Formula]
  \label{lem:g-Cartan}
  For \( \alpha \in \Omega^r(M) \) and \( p \in \fX^s(M) \), we have
  \begin{equation*}
    p\hook d\alpha  - (-1)^s d(p\hook \alpha)
    = \Ldc^{}_P\alpha - L(P) \hook \alpha
  \end{equation*}
  for any \( P \in \Lambda_{\bR}^s\fX(M) \) projecting to \( p \).
\end{lemma}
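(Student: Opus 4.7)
The plan is to prove the identity by induction on the degree \( s \) of \( P \), using the classical Cartan formula \( \Ld_X\alpha = d(X\hook\alpha) + X\hook d\alpha \) as both the base case and the key analytic input. Before starting the induction, I would observe that every term in the identity is \( \bR \)-multilinear in the constituent vector fields \( X_1,\dots,X_s \) of \( P \): contractions and Lie derivatives are \( \bR \)-linear in the vector-field slot, and the bracket \( [X_i,X_j] \) used in \( L(P) \) is \( \bR \)-bilinear. Since any \( P \in \Lambda^s_\bR\fX(M) \) is an \( \bR \)-linear combination of simple elements, it therefore suffices to prove the formula for \( P = X_1\curlywedge\dots\curlywedge X_s \). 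The base case \( s=1 \) is immediate: \( Q_{\wedge 1}=1 \) and \( L(P)=0 \), so the formula reduces to classical Cartan.

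For the inductive step I would single out \( X_1 \) and write \( P = X_1\curlywedge P' \) with \( P' = X_2\curlywedge\dots\curlywedge X_s \) and \( p' = X_2\wedge\dots\wedge X_s \), and set \( \beta = X_1\hook\alpha \in \Omega^{r-1}(M) \). Directly from the definition of \( \hook \) one has
\begin{equation*}
  p\hook\alpha = p'\hook\beta,\qquad p\hook d\alpha = p'\hook(X_1\hook d\alpha).
\end{equation*}
Replacing \( X_1\hook d\alpha \) by \( \Ld_{X_1}\alpha - d\beta \) via classical Cartan, and then applying the inductive hypothesis to \( p' \) and \( \beta \), expresses \( p\hook d\alpha - (-1)^s d(p\hook\alpha) \) entirely in terms of contractions against Lie derivatives \( \Ld_{X_1}\alpha \) and \( \Ld_{X_i}\beta \), and contractions against \( L(P')\hook\beta \).

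The remaining step is to convert the \( \Ld_{X_i}\beta = \Ld_{X_i}(X_1\hook\alpha) \) for \( i\geqslant 2 \) into contractions of \( \Ld_{X_i}\alpha \), using the derivation rule \( \Ld_Y(X\hook\alpha) = X\hook\Ld_Y\alpha + [Y,X]\hook\alpha \). The \( X_1\hook\Ld_{X_i}\alpha \) summands combine with \( p'\hook\Ld_{X_1}\alpha \) to reproduce \( \Ldc_P\alpha \), since \( P_{\wedge 1} = p' \) and for \( i\geqslant 2 \) the tensor \( P_{\wedge i} \) is obtained from \( P'_{\wedge(i-1)} \) by inserting \( X_1 \) with an extra sign. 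The commutator terms \( [X_i,X_1]\hook\alpha \) account for the \( (1,i) \)-pairs in \( L(P) \), and the \( L(P')\hook\beta \) coming from the inductive hypothesis accounts for the pairs \( (i,j) \) with \( 2\leqslant i<j\leqslant s \). Together they assemble to \( L(P)\hook\alpha \) with the correct overall sign.

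The main obstacle is purely combinatorial: verifying that the signs \( (-1)^{i-1} \) built into \( Q_{\wedge i} \), the sign \( (-1)^s \) in front of \( d(p\hook\alpha) \), the implicit reshuffling when passing between \( X_1\curlywedge P'_{\wedge(i-1)} \) and \( P_{\wedge i} \), and the antisymmetry of the bracket all combine consistently. I would organise the bookkeeping by separating the sum defining \( L(P) \) according to whether the pair involves \( X_1 \) or lies entirely in \( \{2,\dots,s\} \); the former is produced by the derivation identity and the latter by induction. No other step requires more than the classical Cartan formula.
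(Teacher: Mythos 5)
Your proposal is correct, but it follows a genuinely different route from the paper's proof. The paper does not induct on \( s \): after the same reduction to decomposable \( P \), it invokes the extreme case \( s=r+1 \), where the identity is the standard coordinate-free formula \( (d\alpha)(q)=\Ldc_Q\alpha-\alpha(L(Q)) \) for the exterior derivative, and then deduces the general case by evaluating \( (q\hook d\alpha)(q') \) against an arbitrary complementary decomposable \( Q'=Y_1\curlywedge\dots\curlywedge Y_t \) with \( s+t=r+1 \), splitting \( \Ldc_{Q\curlywedge Q'}\alpha \) and \( L(Q\curlywedge Q') \) into pure and mixed terms and recognising the mixed ones as contributions to \( \Ld_{Y_j}(Q\hook\alpha) \). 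Your induction instead uses only the degree-one Cartan formula together with the derivation rule \( \Ld_Y(X\hook\alpha)=X\hook\Ld_Y\alpha+[Y,X]\hook\alpha \), so it is more self-contained --- the paper's starting identity is itself the top case of the lemma, taken as known --- at the price of spreading the sign bookkeeping over the induction rather than concentrating it in one computation. The bookkeeping you defer does close up: writing \( \beta=X_1\hook\alpha \), one has \( X_1\curlywedge P'_{\wedge(i-1)}=-P_{\wedge i} \) and \( P_{\wedge 1j}=-P'_{\wedge(j-1)} \) (with the convention that \( P_{\wedge ij} \) carries the sign \( (-1)^{i+j} \) for \( i<j \)), so the terms \( X_1\hook\Ld_{X_i}\alpha \) produced by the derivation rule combine with \( p'\hook\Ld_{X_1}\alpha=P_{\wedge 1}\hook\Ld_{X_1}\alpha \) to give exactly \( \Ldc_P\alpha \), the commutator terms give minus the \( (1,j) \)-part of \( L(P)\hook\alpha \), and \( L(P')\hook\beta=-\sum_{2\leqslant i<j\leqslant s}([X_i,X_j]\curlywedge P_{\wedge ij})\hook\alpha \) supplies the remaining pairs; together with \( p'\hook\beta=p\hook\alpha \) and \( -(-1)^{s-1}=(-1)^s \) this yields the stated identity.
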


\begin{proof}
  The left-hand side is independent of the presentation of \( p \) and
  both sides are \( \bR \)-linear, so it is enough to prove the
  corresponding formula for a decomposable \( Q \) projecting to \( q
  \) as in~\eqref{eq:Qq}.  Note that when \( s = r+1 \) we have one of
  the standard formulae for the exterior derivative:
  \begin{equation}
    \label{eq:dd}
    (d\alpha)(q) = \Ldc_Q\alpha - \alpha(L(Q)).
  \end{equation}
  For general \( s \leqslant r+1 \), write \( Q' = Y_1 \curlywedge
  \dots \curlywedge Y_t \) with \( s + t = r+1 \).  Note that we
  always have \( Q \hook \beta = q \hook \beta \) for any form \(
  \beta \).  Now we compute, using \eqref{eq:dd}, \eqref{eq:Ldc} and
  \eqref{eq:L},
  \begin{equation*}
    \begin{split}
      (q &\hook d\alpha)(q') = d\alpha(q \wedge q')
      = \Ldc_{Q \curlywedge Q'}\alpha - \alpha(L(Q \curlywedge Q'))\\
      &= \sum_{i=1}^s (\Ld_{X_i}\alpha)(Q_{\wedge i} \curlywedge Q') +
      (-1)^s \sum_{j=1}^t (\Ld_{Y_j}\alpha)(Q \curlywedge Q'_{\wedge
      j}) \eqbreak - \alpha(L(Q) \curlywedge Q') - (-1)^s \alpha(Q
      \curlywedge L(Q')) \eqbreak \qquad - (-1)^s \sum_{i=1}^s
      \sum_{j=1}^t \alpha([X_i,Y_j] \curlywedge
      Q_{\wedge i} \curlywedge Q'_{\wedge j}) \\
      &= (\Ldc_Q\alpha)(Q') + (-1)^s \sum_{j=1}^t \Ld_{Y_j}(Q \hook
      \alpha)(Q'_{\wedge j}) - (-1)^s \sum_{j=1}^t \alpha(\Ld_{Y_j}Q
      \curlywedge Q'_{\wedge j}) \eqbreak - (L(Q)\hook\alpha)(Q') -
      (-1)^s(Q \hook \alpha)(L(Q')) +
      (-1)^s \sum_{j=1}^t \alpha(\Ld_{Y_j}Q \curlywedge Q'_{\wedge j}) \\
      &= \bigl(\Ldc^{}_Q\alpha + (-1)^s d(q\hook \alpha) - L(Q) \hook
      \alpha\bigr)(q'),
    \end{split}
  \end{equation*}
  which gives the claimed result.
\end{proof}

\subsection{Symmetries and multi-moment maps}
\label{sec:symmetries}

Let \( (M,\alpha) \) be a manifold with an \( r \)-form \( \alpha \)
not necessarily closed.

\begin{definition}
  A \emph{group of symmetries} of \( (M,\alpha) \) is a connected Lie
  group \( G \) acting on \( M \) preserving \( \alpha \).
\end{definition}

\noindent
Infinitesimally this means that
\begin{equation*}
  \Ld_X \alpha = 0\quad\text{for all \( \Lel X \in \g \)},
\end{equation*}
where \( \g \) is the Lie algebra of \( G \) and we write \( X \) for
the vector field generated by~\( \Lel X \).

\begin{example}[Multi-phase space]
  \label{ex:multi-phase-cont}
  Suppose \( M = \Lambda^kT^*N \) with the canonical \( k \)-plectic
  form \( \alpha \) of \cref{ex:multi-phase}.  Then any diffeomorphism
  \( \phi \) of the base~\( N \) induces a symmetry \( \psi \) of \(
  (M,\alpha) \) covering~\( \phi \), namely take \( \psi =
  (\phi^*)^{-1} \).  In this way, any group \( G \) of diffeomorphisms
  of \( N \) lifts a group of symmetries of \( (M,\alpha) \).
\end{example}

The map sending an element \( \Lel X \) of \( \g \) to the vector
field \( X \) on \( M \) generated by \( \Lel X \) is \( \bR
\)-linear.  So we may extend this to associate to each \( \Lel p \in
\Lambda^s \g \) a unique multi-vector \( p \in \fX^s(M) \).  For a
decomposable \( \Lel q = \Lel X_1 \wedge \Lel X_2 \wedge \dots \wedge
\Lel X_s \), the corresponding multi-vector is exactly the \( q \)
given in equation~\eqref{eq:qq}.  When \( G \) preserves \( \alpha \),
we have \( \Ldc_p\alpha = 0 \) for each \( \Lel p \in \Lambda^s\g \),
so the extended Cartan formula reads
\begin{equation}
  \label{eq:Cartan-invariant}
  p\hook d\alpha - (-1)^sd(p\hook \alpha) = - L(p) \hook
  \alpha\quad\text{for all \( \Lel p \in \Lambda^s\g \)},
\end{equation}
where \( L(p) \) is understood to be the multi-vector corresponding to
\( L(\Lel p) \in \Lambda^{s-1}\g \) which is defined as
in~\eqref{eq:L} but using the Lie bracket of \( \g \).  In particular,
when \( \alpha \) is closed, we see that \( p\hook \alpha \) is closed
whenever \( L(\Lel p) = 0 \).

\begin{definition}
  The \emph{\( k \)th Lie kernel} of \( \g \) is the \( \g \)-module
  \begin{equation*}
    \Pgk = \ker( L\colon \Lambda^k\g \to \Lambda^{k-1}\g ).
  \end{equation*}
  If \( G \) acts a group of symmetries for a closed geometry of
  degree \( r \) we may write
  \begin{equation*}
    \Pg = \Pg[\g,r-1]
  \end{equation*}
  for the \emph{corresponding Lie kernel} of \( \g \). 
\end{definition}

\noindent
Since each \( \Ad_g \), \( g\in G \), is a Lie algebra automorphism of
\( \g \), we see that \( \Pg \) is a \( G \)-module.

If \( G \) is a Abelian, then \( \Pgk = \Lambda^k\g \).  For any \( \g
\), we have \( \Pg[\g,1] = \g \).

\begin{example}
  \label{ex:su2}
  For \( G = \SU(2) \), let \( \Lel X_1,\Lel X_2,\Lel X_3 \) be a
  basis of \( \su(2) \) satisfying \( [\Lel X_1,\Lel X_2] = -2\Lel X_3
  \), etc.  The map \( L\colon \Lambda^3\su(2) \to \Lambda^2\su(2) \)
  is given by the cyclic sum \( L(\Lel X_1\wedge \Lel X_2\wedge \Lel
  X_3) = \Cyclic_{1,2,3}[\Lel X_1,\Lel X_2]\wedge \Lel X_3 =
  -2\sum_{i=1}^3 \Lel X_i\wedge \Lel X_i = 0 \), so this \( L \) is
  identically zero.  Thus \( \Pg[\su(2),2] = \Lambda^3\su(2)^* \cong
  \bR \).  On the other hand, \( L\colon \Lambda^2\su(2) \to \su(2) \)
  is an isomorphism and \( \Pg[\su(2),1] = \{0\} \).

  More generally, if \( \g \) is a simple Lie group of compact type,
  and \( \Lel X_1,\dots,\Lel X_n \) is a basis for \( \g \) consisting
  of unit length vectors for the Killing form, then one has that the
  element \( \sum_{i,j=1}^n \Lel X_i \wedge \Lel X_j \wedge [\Lel
  X_i,\Lel X_j] \) lies in \( \Pg[\g,2] \).  This element corresponds
  to the standard representative \( \gamma(\Lel X,\Lel Y,\Lel Z) =
  \inp{[\Lel X,\Lel Y]}{\Lel Z} \) of the third cohomology group \(
  H^3(\g) \cong \bR \).  For \( \g \) simple, \( L\colon \Lambda^2\g
  \to \g \) is onto so we have \( \Pg[\g,1] \cong {\Lambda^2\g} / \g
  \), which is a non-zero irreducible \( G \)-module when \( \dim G >
  3 \), cf. \textcite{Wolf:isotropy}.
\end{example}

Now suppose that we have a closed geometry \( (M,\alpha) \) and that
\( G \) is a group of symmetries.  Then for \( \Lel p \) in \( \Pg =
\Pg[\g,r-1] \leqslant \Lambda^{r-1}\g \) we have \( p \hook \alpha \in
\Omega^1(M) \) and
\begin{equation*}
  d(p \hook \alpha) = 0,
\end{equation*}
by \cref{lem:g-Cartan}.  Thus \( p \hook \alpha \) is a closed
one-form and locally the derivative of a function \( \nu_{\Lel p} \).
Letting \( \Lel p \) vary over \( \Pg \), we obtain a family of
functions that may be combined into a local map \( \nu \colon M \to
\Pg^* \) by setting \( \inp \nu{\Lel p} = \nu_{\Lel p} \).  This
motivates the following definition:

\begin{definition}
  \label{def:multi}
  Let \( G \) be a symmetry group for a closed geometry \( (M,\alpha)
  \).  A \emph{multi-moment map} for this action is an equivariant map
  \( \nu \colon M \to \Pg^* \) satisfying
  \begin{equation}
    \label{eq:multi}
    d \inp \nu{\Lel p} = p\hook\alpha
  \end{equation}
  for all \( \Lel p\in \Pg \).
\end{definition}

\noindent
For \( \alpha \) a symplectic form, this is exactly the usual notion
of moment map, since \( \Pg = \Pg[\g,1] = \g \).  Concrete examples of
multi-moment maps will be given in \cref{sec:example-geometries}.

The obstructions to constructing a multi-moment map are quite weak.
To start with if \( b_1(M) = 0 \) then we can always find global
functions \( \nu_{\Lel p} \) with \( d\nu_{\Lel p} = p \hook \alpha
\).  Using averaging arguments we have the following topological
existence result.

\begin{theorem}
  \label{thm:topological}
  Let \( (M,\alpha) \) be a closed geometry with \( G \) acting as a
  group of symmetries.  Suppose \( b_1(M) = 0 \).  Then there exists a
  multi-moment map \( \nu \colon M \to \Pg^* \) if either
  \begin{compactenum}
  \item \( G \) is compact, or
  \item \( M \) is compact and orientable, and \( G \) preserves a
    volume form on \( M \).
  \end{compactenum}
\end{theorem}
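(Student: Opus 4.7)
The plan is first to construct a candidate $\nu_0\colon M \to \Pg^*$ satisfying the differential identity in \eqref{eq:multi} using the topological hypothesis, and then to correct it by averaging so that it becomes $G$-equivariant. As noted just before \cref{def:multi}, for each $\Lel p \in \Pg$ the one-form $p \hook \alpha$ is closed: this follows immediately from \cref{lem:g-Cartan} applied to any lift $P$ of $\Lel p$, since $\Ldc_P \alpha = 0$ by $G$-invariance and $L(\Lel p) = 0$ by definition of $\Pg$. With $b_1(M) = 0$, every such form is globally exact, so choosing primitives for a basis of the finite-dimensional space $\Pg$ and extending $\bR$-linearly gives a smooth $\nu_0 \colon M \to \Pg^*$ with $d\inp{\nu_0}{\Lel p} = p \hook \alpha$ for all $\Lel p \in \Pg$.

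Next I would measure the failure of equivariance. Since $L$ is $\g$-equivariant, $\Pg \subseteq \Lambda^{r-1}\g$ is a $\g$-submodule, and $[\Lel X, \Lel p] \in \Pg$ whenever $\Lel X \in \g$ and $\Lel p \in \Pg$. Set
\begin{equation*}
  E(\Lel X, \Lel p)(x)
  \;=\; X \cdot \inp{\nu_0}{\Lel p}|_{x}
  \;+\; \inp{\nu_0(x)}{[\Lel X, \Lel p]}.
\end{equation*}
Computing the exterior derivative of $E(\Lel X, \Lel p)$ in $x$: the first term yields $\Ld_X(p \hook \alpha) = (\Ld_X p) \hook \alpha$ using $\Ld_X \alpha = 0$, while the second yields $[\Lel X, \Lel p]^M \hook \alpha$. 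The identity $\Ld_X p = -[\Lel X, \Lel p]^M$, arising from $\g \to \fX(M)$ being a Lie algebra anti-homomorphism for left actions (extended as a derivation to $\Lambda^{\bullet}\g$), makes these contributions cancel, so $dE = 0$. Hence $E(\Lel X, \Lel p)$ is locally constant and determines a bilinear pairing $c \colon \g \times \Pg \to \bR$; as $G$ is connected, $\nu_0$ is $G$-equivariant if and only if $c \equiv 0$.

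It remains to annihilate $c$ in the two cases. For case (i), I average over the compact group: define
\begin{equation*}
  \nu(x) \;=\; \int_G \rho(g^{-1})\, \nu_0(g \cdot x)\, dg
\end{equation*}
with normalised Haar measure $dg$ and $\rho$ the representation of $G$ on $\Pg^*$ dual to the adjoint action on $\Pg$. Equivariance follows from the change of variable $g\mapsto hg$, and that $\nu$ still satisfies \eqref{eq:multi} is a direct consequence of $g^*\alpha = \alpha$ together with the identity $g^*(\Lel p^M) = (\Ad(g^{-1})\Lel p)^M$. For case (ii), with $M$ compact and $\omega$ a $G$-invariant volume form, I average over $M$ instead: let $\bar\nu_0 := \vol(M)^{-1}\int_M \nu_0\, \omega \in \Pg^*$. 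Because $G$-invariance of $\omega$ and Stokes' theorem give $\int_M X(f)\,\omega = 0$ for every $f \in C^\infty(M)$, integrating $E(\Lel X,\Lel p)$ against $\omega$ forces $c(\Lel X, \Lel p) = \inp{\bar\nu_0}{[\Lel X, \Lel p]}$. Then $\nu := \nu_0 - \bar\nu_0$ still satisfies \eqref{eq:multi} (it differs from $\nu_0$ by a constant) and the associated pairing vanishes by construction, so $\nu$ is equivariant.

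The principal obstacle is the sign-accurate derivation that $dE = 0$: it rests on matching the Lie derivative of multi-vector fields on $M$ with the algebraic $\g$-action on $\Lambda^{r-1}\g$ via $\Lel X \mapsto X$, together with the $\g$-equivariance of $L$. Once these compatibilities are secured, both averaging steps are standard adaptations of the classical proof of existence of symplectic moment maps, applied here to the finite-dimensional target $\Pg^*$ rather than $\g^*$.
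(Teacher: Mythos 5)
Your proposal is correct and follows essentially the same route as the paper, which simply notes that the averaging arguments of the three-form case carry over: averaging over \( G \) in case (i) and over \( M \) against the invariant volume form in case (ii). You have merely written out in full the details that the paper delegates to its reference, and the key steps (closedness of \( p\hook\alpha \) via the extended Cartan formula, the locally constant equivariance defect \( E(\Lel X,\Lel p) \), and the two averaging corrections) are all sound.
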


\begin{proof}
  The proofs given in~\cite{Madsen-S:multi-moment} for the case of \(
  \alpha \) a three-form carry directly over to this general
  situation.  In the first case, one averages over \( G \); in the
  second, one averages over~\( M \).
\end{proof}

A second useful existence result occurs when \( \alpha \) is exact in
a good way.

\begin{proposition}
  \label{prop:exact}
  Suppose \( G \) is a group of symmetries of a closed geometry \(
  (M,\alpha) \) of degree~\( r \).  If there exists a \( G
  \)-invariant form \( \beta \) with \( \alpha = d\beta \), then
  \begin{equation*}
    \inp \nu{\Lel p} = (-1)^{r-1}\beta(p)\qquad\text{for \( \Lel p \in
    \Pg \)}
  \end{equation*}
  defines a multi-moment map for the action of \( G \).
\end{proposition}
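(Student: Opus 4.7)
The plan is to read off the multi-moment map directly from the extended Cartan formula (Lemma~\ref{lem:g-Cartan}) applied to $\beta$, taking the contracting multi-vector to be the $p\in\fX^{r-1}(M)$ corresponding to $\Lel p\in\Pg$, so that $s=r-1$.

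I would first lift $\Lel p$ to an element $\Lel P\in\Lambda_\bR^{r-1}\g$ and let $P\in\Lambda_\bR^{r-1}\fX(M)$ be its image under the $\bR$-linear map $\g\to\fX(M)$. Both terms on the right-hand side of Lemma~\ref{lem:g-Cartan} then vanish: $\Ldc_P\beta = 0$ because $G$-invariance of $\beta$ gives $\Ld_{X_i}\beta = 0$ on each generator, while $L(P)\hook\beta = 0$ because $\Lel p\in\Pg$ forces $L(\Lel p) = 0$ in $\Lambda^{r-2}\g$ by definition of the Lie kernel, so that the corresponding multi-vector on $M$ is zero. The lemma therefore collapses to
\begin{equation*}
  p\hook d\beta = (-1)^{r-1}d(p\hook\beta).
\end{equation*}
Since $d\beta=\alpha$ and $p\hook\beta$ is the scalar function $\beta(p)$, the candidate $\inp\nu{\Lel p}:=(-1)^{r-1}\beta(p)$ at once satisfies $d\inp\nu{\Lel p}=p\hook\alpha$, which is the defining equation~\eqref{eq:multi}.

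Equivariance is then the only remaining point. The $\bR$-linear map $\Lel p\mapsto p$ from $\Lambda^{r-1}\g$ to multi-vector fields intertwines the adjoint action on the source with the push-forward action on the target, and combining this with $G$-invariance of $\beta$ yields $\inp{\nu(g\cdot x)}{\Lel p}=\inp{\nu(x)}{\Ad(g^{-1})\Lel p}$, which is exactly the $\Ad^*$-equivariance asked of a multi-moment map in Definition~\ref{def:multi}. The only subtle point in the whole argument is the sign-bookkeeping for the factor $(-1)^{s}=(-1)^{r-1}$ produced by Lemma~\ref{lem:g-Cartan}; once this is absorbed into the definition of $\nu$, everything fits together formally.
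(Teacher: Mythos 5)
Your proposal is correct and follows essentially the same route as the paper: apply the extended Cartan formula (\cref{lem:g-Cartan}) with \( s=r-1 \), kill \( \Ldc_P\beta \) by \( G \)-invariance of \( \beta \) and \( L(P)\hook\beta \) by \( L(\Lel p)=0 \), and deduce \( d\inp\nu{\Lel p}=p\hook d\beta=p\hook\alpha \), with equivariance coming from invariance of \( \beta \). The paper's proof is just a terser version of the same argument.
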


\begin{proof}
  Invariance of \( \beta \) implies that \( \nu \) is equivariant, so
  we just need to verify equation~\eqref{eq:multi}.  However,
  \cref{lem:g-Cartan} gives
  \begin{equation*}
    d \inp \nu{\Lel p} = (-1)^{r-1} d(p \hook \beta)
    = p \hook d\beta = p \hook \alpha,
  \end{equation*}
  since \( L(\Lel p) = 0 \) and \( \beta \) is invariant.
\end{proof}

Finally there is a purely algebraic existence and uniqueness result
depending only on the Betti numbers of \( \g \).  The dual of the map
\( L\colon \Lambda^{k+1}\g \to \Lambda^k\g \) is essentially the
differential
\begin{equation}
  \label{eq:d-g}
  \begin{gathered}
    d\colon \Lambda^k\g^* \to \Lambda^{k+1}\g^*\\
    (d\gamma)(\Lel X_1,\Lel X_2,\dots,\Lel X_k) = -\gamma(L(\Lel X_1
    \wedge \Lel X_2 \wedge \dots \wedge \Lel X_k)).
  \end{gathered}
\end{equation}
The Jacobi identity implies \( L\circ L = 0 \) and so \( d \circ d = 0
\).  Thus we have the Lie algebra homology \( H_*(\g) \) of \( \g \)
defined by the complex \( (\Lambda^*{\g},L) \) and the Lie algebra
cohomology \( H^*(\g) \) defined by \( (\Lambda^*{\g}^*,d) \).  In
particular,
\begin{equation}
  \label{eq:Hk-g}
  H^k(\g) = \frac{\ker(d\colon \Lambda^k\g^* \to
  \Lambda^{k+1}\g^*)}{\im(d\colon \Lambda^{k-1}\g^* \to
  \Lambda^k\g^*)} = \frac{Z^k(\g)}{B^k(\g)},
\end{equation}
the quotient of the space \( Z^k(\g) = \ker d \) of cycles by the
space of boundaries \( B^k(\g) = \im d \), and we write
\begin{equation*}
  b_k(\g) = \dim H^k(\g)
\end{equation*}
for the \emph{\( k \)th Betti number} of \( \g \).

Our algebraic existence and uniqueness criteria are expressed in terms
of vanishing of certain Betti numbers.  It is therefore useful to
introduce the following terminology.

\begin{definition}
  A connected Lie group \( G \) or its Lie algebra \( \g \) is
  \emph{(cohomologically) \( (k_1,k_2,\dots,k_\ell) \)-trivial} if the
  Betti numbers \( b_k(\g) \) vanish for \( k = k_1, k_2,\dots, k_\ell
  \).
\end{definition}

We will discuss these type of conditions in some detail in
\cref{sec:cohomology}, however let us note that a simple Lie algebra
is always \( (1,2) \)-trivial, but has \( b_3 \) non-zero.  Indeed
looking up the Poincaré polynomials of the compact simple Lie algebras
reveals the following:

\begin{proposition}[\textcite{Madsen:mm-thesis}]
  Every compact simple Lie algebra not isomorphic to \( \su(n) \), \(
  n\geqslant3 \), is \( (1,2,4,5,6) \)-trivial.  \qed
\end{proposition}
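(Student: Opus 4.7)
The plan is to reduce the statement to an inspection of the classical Poincaré polynomials of the compact simple Lie groups. First, for any semisimple Lie algebra \( \g \), Whitehead's first and second lemmas give \( b_1(\g) = b_2(\g) = 0 \), so those two conditions hold unconditionally. For the higher degrees I would invoke the classical averaging argument identifying \( H^*(\g,\bR) \) with the de Rham cohomology \( H^*_{\mathrm{dR}}(G) \) of the corresponding compact simply connected Lie group~\( G \); by the theorem of Hopf and Samelson this cohomology is an exterior algebra on primitive generators in odd degrees \( 2d_1-1, \dots, 2d_\ell-1 \), where \( d_1, \dots, d_\ell \) are the fundamental invariant degrees of~\( \g \).

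With this structural description the even Betti numbers \( b_4 \) and \( b_6 \) vanish automatically for every compact simple \( \g \). Each primitive generator \( \alpha \) has odd degree, so \( \alpha \wedge \alpha = 0 \), and a class in even degree \( 2m \) must be a sum of products of pairwise distinct primitives whose degrees sum to \( 2m \). For \( 2m = 4 \) this would force primitives in degrees \( 1 \) and \( 3 \); for \( 2m = 6 \) it would force either degrees \( (1,5) \) or two distinct primitives of degree \( 3 \). All such possibilities are excluded by \( b_1(\g) = 0 \) together with the well-known fact \( b_3(\g) = 1 \) (a single Cartan three-form), so \( b_4(\g) = b_6(\g) = 0 \) on any simple compact \( \g \).

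It remains to show \( b_5(\g) = 0 \), equivalently the absence of a primitive in degree~\( 5 \), i.e.\ that \( 5 \notin \{2d_i - 1\} \). Here I would just inspect the standard tables: for \( A_n \) the generator degrees are \( 3, 5, 7, \dots, 2n+1 \), so \( 5 \) appears precisely when \( n \geqslant 2 \), which is the excluded family \( \su(n) \) with \( n \geqslant 3 \); for \( B_n \) and \( C_n \) the degrees are of the form \( 4k-1 \); for \( D_n \) with \( n \geqslant 4 \) they are of the form \( 4k-1 \) together with \( 2n-1 \geqslant 7 \); and for the exceptional algebras \( G_2, F_4, E_6, E_7, E_8 \) they are \( (3,11) \), \( (3,11,15,23) \), \( (3,9,11,15,17,23) \), \( (3,11,15,19,23,27,35) \) and \( (3,15,23,27,35,39,47,59) \) respectively. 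None of these, nor \( A_1 = \su(2) \), contains \( 5 \). The only real obstacle is the routine type-by-type verification; given that the classical results quoted above are available off the shelf, no further computation is required.
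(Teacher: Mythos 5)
Your argument is correct and is essentially the paper's: the authors justify the proposition simply by \enquote{looking up the Poincar\'e polynomials of the compact simple Lie algebras}, which is exactly the type-by-type inspection you perform, and your tables of generator degrees are accurate. The only added value in your write-up is the pleasant observation that the Hopf--Samelson exterior-algebra structure makes \( b_4 = b_6 = 0 \) automatic for any compact simple algebra, reducing the case check to the single question of whether \( 5 \) occurs as a generator degree.
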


Returning to multi-moment maps the algebraic existence and uniqueness
result is:

\begin{theorem}
  \label{thm:algebraic}
  Suppose \( (M,\alpha) \) is a closed geometry of degree \( r \) and
  \( G \) is a group of symmetries.  If \( G \) is \( (r-1,r)
  \)-trivial then there exists a unique multi-moment map \( \nu\colon
  M \to \Pg^* \).  If \( G \) is just \( (r-1) \)-trivial, then \( \nu
  \) is unique whenever it exists.
\end{theorem}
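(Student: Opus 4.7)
I would split the result into the uniqueness assertion (needing only $b_{r-1}(\g)=0$) and the existence assertion (needing both vanishings), using the extended Cartan formula \eqref{eq:Cartan-invariant} as the key calculational tool in each.

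\emph{Uniqueness.} Let $\nu_1,\nu_2$ be two multi-moment maps and set $\mu = \nu_1-\nu_2$. The defining relation \eqref{eq:multi} gives $d\inp\mu{\Lel p}=0$ for every $\Lel p\in\Pg$, so on (connected) $M$ the map $\mu$ reduces to a constant $c\in\Pg^*$; equivariance combined with the connectedness of $G$ then places $c$ in $(\Pg^*)^\g$. The heart of the argument is the natural identification
\begin{equation*}
(\Pg^*)^\g \;\cong\; H^{r-1}(\g),
\end{equation*}
from which $b_{r-1}(\g)=0$ immediately forces $c=0$. Dually this is the identification of the $\g$-coinvariants $\Pg_\g = \Pg/(\g\cdot\Pg)$ with $H_{r-1}(\g) = \Pg/\im L$, and one direction ($\g\cdot\Pg \subset \im L$) follows at once from the Koszul chain-homotopy identity $\ad_{\Lel X} = L\,s_{\Lel X}+s_{\Lel X}\,L$ on $\Lambda^*\g$, where $s_{\Lel X}(\Lel q):=\Lel X\wedge\Lel q$: for $\Lel p\in\ker L$ this collapses to $\ad_{\Lel X}\Lel p = L(\Lel X\wedge\Lel p)\in\im L$.

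\emph{Existence.} The starting observation is that \eqref{eq:Cartan-invariant} applied to $\Lel q\in\Lambda^r\g$ collapses, thanks to $d\alpha=0$ and the $G$-invariance of $\alpha$, to $d(q\hook\alpha) = (-1)^r L(\Lel q)\hook\alpha$, so $(-1)^r q\hook\alpha$ is a smooth primitive of the closed one-form $L(\Lel q)\hook\alpha$. The hypothesis $b_{r-1}(\g)=0$ forces $\Pg=\im L$, whence $L\colon\Lambda^r\g\to\Pg$ is surjective and admits an $\bR$-linear section~$\sigma$. Defining $\nu(\Lel p) := (-1)^r\sigma(\Lel p)\hook\alpha$ then satisfies $d\nu(\Lel p)=\Lel p\hook\alpha$ automatically. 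The failure of equivariance is measured by $(-1)^r\tau(\Lel X,\Lel p)\hook\alpha$, with
\begin{equation*}
\tau(\Lel X,\Lel p) := \ad_{\Lel X}\sigma(\Lel p) - \sigma(\ad_{\Lel X}\Lel p)\;\in\;\ker L = \Pg[\g,r]
\end{equation*}
(the latter because $L$ is $\g$-equivariant and $L\sigma = \mathrm{id}$). Now the second hypothesis $b_r(\g)=0$, i.e.\ $H_r(\g)=0$, yields $\Pg[\g,r]=\im(L\colon\Lambda^{r+1}\g\to\Lambda^r\g)$; writing $\tau(\Lel X,\Lel p)=L(\Lel y)$ for some $\Lel y\in\Lambda^{r+1}\g$ and applying \eqref{eq:Cartan-invariant} once more with $s=r+1$, where $y\hook\alpha=0$ for degree reasons, forces $L(\Lel y)\hook\alpha=0$, and equivariance is established.

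\emph{Main obstacle.} The hard part is the converse inclusion $\im L\subset\g\cdot\Pg$ needed for the uniqueness step, i.e.\ rewriting each $L(\Lel q)$ with $\Lel q\in\Lambda^r\g$ as a sum of terms $L(\Lel X_i\wedge\Lel p_i) = \ad_{\Lel X_i}\Lel p_i$ with $\Lel p_i\in\Pg$, modulo $\Pg[\g,r]$. For compact $\g$ this is a standard consequence of averaging and Chevalley-Eilenberg harmonic decomposition, but the general case treated by the theorem requires a more delicate homological analysis of the Koszul structure of~$(\Lambda^*\g, L)$.
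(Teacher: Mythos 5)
Your existence argument is correct, and it is essentially the paper's construction read through the duality between \( L \) and \( d \): the paper encodes the same data as a map \( \Psi\colon M\to Z^r(\g) \), \( \inp{\Psi(x)}{\Lel q}=(-1)^r\alpha(q)_x \), and solves \( \dP\nu=\Psi \) using \( \im\dP=B^r(\g)=Z^r(\g) \) (where \( b_r(\g)=0 \) enters) together with injectivity of \( \dP \) (where \( b_{r-1}(\g)=0 \) enters); injectivity makes the solution unique, hence automatically equivariant, so no section \( \sigma \) is chosen and no defect \( \tau \) needs controlling. Your verification that \( \tau(\Lel X,\Lel p) \) lies in \( \ker L=\im L\subset\Lambda^r\g \) and is annihilated by \( \alpha \) is a correct hands-on substitute for that.

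The genuine gap is in uniqueness. Your argument stands or falls with the identification \( (\Pg^*)^{\g}\cong H^{r-1}(\g) \), equivalently \( \g\cdot\Pg=\im\bigl(L\colon\Lambda^r\g\to\Lambda^{r-1}\g\bigr) \), and you establish only the easy inclusion. The homotopy identity gives \( L(\Lel X\curlywedge \Lel c)=\ad_{\Lel X}\Lel c-\Lel X\curlywedge L(\Lel c) \) for arbitrary \( \Lel c\in\Lambda^{r-1}\g \); when \( \Lel c\notin\ker L \) neither term on the right visibly lies in \( \g\cdot\Pg \), so \( \im L\subseteq\g\cdot\Pg \) is not a formal consequence of the Koszul structure, and it is not clear it holds in the generality the theorem requires (your averaging remark covers only the compact case, which is far from the solvable algebras that actually satisfy \( b_{r-1}(\g)=0 \) by \cref{lem:b3}). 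The paper never meets this question: it realises \( \Pg^* \) as \( \Lambda^{r-1}\g^*/B^{r-1}(\g) \) by dualising \( 0\to\Pg\to\Lambda^{r-1}\g\to\Lambda^{r-2}\g \), so the induced map \( \dP\colon\Pg^*\to\Lambda^r\g^* \), \( \dP(b+B^{r-1}(\g))=db \), has kernel \( Z^{r-1}(\g)/B^{r-1}(\g)=H^{r-1}(\g) \) \emph{by definition}, and \( b_{r-1}(\g)=0 \) is invoked directly as injectivity of \( \dP \). A multi-moment map is then pinned down through the pointwise algebraic relation \( \dP\nu=\Psi \) rather than only through its differential, because for \( \Lel p=L(\Lel q) \) equation \eqref{eq:Cartan-invariant} supplies the explicit primitive \( (-1)^r\alpha(q) \) of \( p\hook\alpha \); the comparison of two candidates therefore takes place in \( \ker\dP=H^{r-1}(\g)=\{0\} \), not in \( (\Pg^*)^{\g} \). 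To repair your proof, either prove \( \im L\subseteq\g\cdot\Pg \) (which is the entire content of the step and is left open) or switch to the \( \dP \)-based bookkeeping, where the required vanishing is literally the hypothesis.
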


\begin{proof}
  The proof builds on the following observation.  Taking the dual of
  the exact sequence
  \begin{equation*}
    \begin{CD}
      0 @>>> \Pgk @>\iota>> \Lambda^k\g @>L>> \Lambda^{k-1}\g
    \end{CD}
  \end{equation*}
  we obtain the sequence
  \begin{equation*}
    \begin{CD}
      \Lambda^{k-1}\g^* @>d>> \Lambda^k\g^* @>\pi>> \Pgk^* @>>> 0.
    \end{CD}
  \end{equation*} 
  From this one sees that \( \Pgk^* \cong \Lambda^k\g^* / B^k(\g) \)
  and so the exterior derivative \( d \colon \Lambda^k\g^* \to
  \Lambda^{k+1}\g^* \) induces a well-defined linear map \( \dP\colon
  \Pg^* \to \Lambda^{k+1}(\g) \) via \( \dP a = db \) where \( a = b +
  B^k(\g) \in \Pgk^* \cong \Lambda^k\g^* / B^k(\g) \).  We now see
  that \( \dP \) is injective if and only if \( b_k(\g) = 0 \) and
  that the image of \( \dP \) is \( B^{k+1}(\g) \).

  Now let us consider the situation of the \namecref{thm:algebraic}.
  The action of \( G \) on \( M \) defines a map
  \begin{gather}
    \Psi \colon M \to Z^r(\g), \notag \\
    \inp {\Psi(x)}{\Lel p} = (-1)^r \alpha(p)_x, \label{eq:Psi}
  \end{gather}
  for all \( \Lel p \in \Lambda^r\g \) and \( x \in M \).  To see that
  the image lies in \( Z^r(\g) \leqslant \Lambda^r\g^* \), use
  \eqref{eq:Cartan-invariant} for the invariant closed form \( \alpha
  \) to get
  \begin{equation*}
    \inp {d(\Psi(x))}{\Lel q} = \inp {\Psi(x)}{L(\Lel q)} = (-1)^r
    \alpha(L(q))_x = (-1)^{r+1} (q \hook d\alpha)_x = 0, 
  \end{equation*}
  for each \( \Lel q \in \Lambda^{r+1}\g \).

  Now if \( b_r(\g) = 0 \), then \( Z^r(\g) = B^r(\g) = \im \dP \), so
  we may find for each \( x\in M \) a \( \nu_x \in \Pg^* \) with \(
  \dP(\nu_x) = \Psi(x) \).  If \( b_{r-1}(\g) = 0 \), the map \( \dP
  \) is injective, so there is a unique choice of \( \nu_x \) for each
  \( x \).  It follows that \( \nu \) is equivariant.

  Suppose we have an equivariant map \( \nu\colon M \to \Pg^* \) with
  \( \dP\nu = \Psi \) of equation~\eqref{eq:Psi} and that \(
  b_{r-1}(\g) = 0 \).  We claim that \( \nu \) is a multi-moment map.
  The important fact here is that \( b_{r-1}(\g) = 0 \) says \( \ker d
  = \im d \) in \( \Lambda^{r-1}\g^* \) which dually means that \( \im
  L = \ker L \) in \( \Lambda^{r-1}\g \).  However, \( \ker L = \Pg \)
  so \( L\colon \Lambda^r\g \to \Lambda^{r-1}\g \) maps on to the Lie
  kernel~\( \Pg \).  We may now compute, for \( \Lel p = -L(\Lel q) \in
  \Pg \),
  \begin{equation*}
    \begin{split}
      d \inp \nu{\Lel p} &= -d \inp \nu{L(\Lel q)}
      = d \inp{\dP(\nu)}{\Lel q} \\
      &= d \inp \Psi{\Lel q} = (-1)^r d(q\hook \alpha) \\
      &= L(q) \hook \alpha = p \hook \alpha,
    \end{split}
  \end{equation*}
  by \eqref{eq:Cartan-invariant}.  Thus \( \nu \) is indeed a
  multi-moment map.
\end{proof}

\section{Example geometries and their multi-moment maps}
\label{sec:example-geometries}

Having introduced the general theory of multi-moment maps we will now
look at a number of concrete examples.  For many examples the main
focus will be on closed geometries of degree~\( 4 \), but we will also
consider other cases.  When relevant we will also discuss the use of
multi-moment maps to describe reductions of certain geometries.

\begin{definition}
  Suppose \( \nu \colon M \to \Pg^* \) is a multi-moment map.  Then
  for each \( t \in \Pg^* \) fixed by the \( G \)-action, the
  \emph{reduction of \( M \) at level~\( t \)} is
  \begin{equation*}
    M \mred[\nu,t] G = \nu^{-1}(t) / G.
  \end{equation*}
  We set \( M \mred G = M \mred[\nu,0] G \) for the reduction at
  level~\( 0 \).
\end{definition}

This makes sense, since \( \nu \) is \( G \)-equivariant so the \( G
\)-action preserves \( \nu^{-1}(t) \) whenever \( t \) is fixed by~\(
G \).  This notion of reduction corresponds to the usual
Marsden-Weinstein quotient in symplectic geometry.  However because
the structure of forms of higher degree is so varied, the type of
geometry obtained on the quotient is often of a different character
to the geometry on~\( M \).  Also, one usually has to impose
assumptions, such as freeness of the action of \( G \) and regularity
of the value~\( t \), in order to obtain smooth quotients.

\subsection{Multi-phase space}
\label{sec:multi-phase-space}

This is \( M = \Lambda^{r-1}T^*N \) with the canonical \( r \)-form \(
\alpha \) of \cref{ex:multi-phase}.  If \( G \) is any group of
diffeomorphisms of \( N \), then as noted in
\cref{ex:multi-phase-cont}, this induces an action of \( G \) on \( M
\) preserving \( \alpha \).  However, in this case we have \( \alpha
\) is equivariantly exact: the canonical form \( \beta \) is also \( G
\)-invariant and satisfies \( d\beta = \alpha \).  By
\cref{prop:exact}, there is a multi-moment map \( \nu \) given by \(
\inp \nu{\Lel p} = (-1)^{r-1}\beta(p) \).

A concrete example is provided by taking \( N = \bR^4 \).  If we
consider the closed geometry of degree \( 4 \) on \( M = \Lambda^3T^*N
\cong TN \cong \bR^8 \), we have \( \alpha \) and \( \beta \) given by
the cyclic sums
\begin{equation*}
  \alpha = \Cyclic_{1,2,3,4} dp^1 \wedge dq^2 \wedge dq^3 \wedge dq^4,\quad
  \beta = \Cyclic_{1,2,3,4} p^1 dq^2 \wedge dq^3 \wedge dq^4.
\end{equation*}
If \( G = \bR^4 \) acts by translations on \( N = \bR^4 \) then
\begin{equation*}
  \nu\left(\frac\partial{\partial q^1} \wedge \frac\partial{\partial
    q^2} \wedge \frac\partial{\partial q^3}\right) =
  p^4,\quad\text{etc.} 
\end{equation*}
and \( \nu \) is simply projection on to the fibres of \( T\bR^4 \to
\bR^4 \).

\subsection{Product manifolds}
\label{sec:product-manifolds}

Let \( (N,\alpha') \) be a \( (k-1) \)-plectic manifold.  Consider \(
M = S^1 \times N \) and write \( \theta \) for the standard one-form
on the \( S^1 \)-factor.  Then as in the proof of
\cref{prop:non-degenerate}, we have that \( \alpha = \theta \wedge
\alpha' \) is a \( k \)-plectic form on~\( M \).

If \( H \) is a group of symmetries of \( (N,\alpha') \), then \( G =
S^1 \times H \) is a group of symmetries of \( (M,\alpha) \), where
the \( S^1 \)-factor of~\( G \) acts non-trivially on just the \( S^1
\)-factor of \( M \) preserving~\( \theta \).

Suppose \( \nu'\colon N \to \Pg[\h,k-1]^* \) is a multi-moment map for
the action of~\( H \) on~\( N \).  Writing \( \g = \bR\Lel T \oplus \h
\), we have that
\begin{equation}
  \label{eq:Lkg}
  \Lambda^m \g = (\Lel T \wedge \Lambda^{m-1}\h) \oplus \Lambda^m \h.
\end{equation}
Since \( \Lel T \) commutes with \( \h \), the map \( L \colon
\Lambda^k \g \to \Lambda^{k-1} \g \) preserves the
splittings~\eqref{eq:Lkg} and we conclude that
\begin{equation*}
  \Pg[\g,k] \cong (\Lel T \wedge \Pg[\h,k-1]) \oplus \Pg[\h,k].
\end{equation*}
As \( 0 = \Ld_T\theta = d(T\hook\theta) \), we may scale \( \Lel T
\) by a constant so that \( \theta(T) = 1 \).  Let~\( \vartheta \)
denote the element of~\( \g^* \) that annihilates~\( \h \) and has
\( \vartheta(\Lel T) = 1 \).  We claim that
\begin{equation*}
  \nu = \vartheta \wedge \nu'
\end{equation*}
is a multi-moment map for the action of \( G \) on~\( M \).  Firstly,
\( \nu \) is a map to \( \bR\vartheta \wedge \Pg[\h,k-1]^* \subset
\Pg[\g,k]^* \) and it is equivariant for the action of \( G = S^1 \times
H \).  Secondly, for \( \Lel p \in \Pg[\g,k] \), we have \( \Lel p =
\Lel T \wedge \Lel p' + \Lel q \) with \( \Lel p' \in \Pg[\h,k-1] \)
and \( \Lel q \in \Pg[\h,k] \).  Now \( \nu \) is zero on \( \Lel q \) and
\begin{equation*}
  d\inp \nu{\Lel p} = d\inp{\vartheta\wedge\nu'}{\Lel T\wedge\Lel p'}
  = d\inp{\nu'}{\Lel p'} = p' \hook \alpha' = (T\wedge p') \hook
  (\theta\wedge\alpha') = p \hook \alpha.
\end{equation*}
So \( \nu \) satisfies \eqref{eq:multi} and is a multi-moment map.

Starting with \( (N,\omega) \) a symplectic manifold with a
Hamiltonian action of~\( H \), iteration of the above construction
produces a \( k \)-plectic structure on \( M = T^{k-1} \times N \)
together with a multi-moment map for the action of \( G = T^{k-1}
\times H \).

\subsection{Symplectic manifolds}
\label{sec:sympelctic}

If \( \omega \in \Omega^2(M) \) is an ordinary symplectic form, then
each power \( \omega^k \in \Omega^{2k}(M) \) with \( 2k \leqslant \dim
M \) is \( (2k-1) \)-plectic.  In particular, we may consider the
four-form \( \alpha = \omega \wedge \omega = \omega^2 \) as a \( 3
\)-plectic form on \( M \).  Let us take \( \dim M \geqslant 6 \) and
assume that \( M \) is simply-connected.

If \( X \) is a vector field preserving \( \alpha \), then we have \(
0 = \Ld_X\alpha = 2\omega\wedge\Ld_X\omega \).  But the map \(
\omega\wedge\cdot \colon \Lambda^2T^*M \to \Lambda^4T^*M \) is
injective when \( \dim M \geqslant 6 \), so we have \( \Ld_X\omega = 0
\) and \( X \)~also preserves \( \omega \).  Thus symmetries of \(
(M,\omega^2) \) are nothing but symplectomorphisms of \( (M,\omega) \).

Let us first consider actions of Abelian groups.  Suppose \( G = \bR^3
\) acts generated by vector fields \( X_1 \), \( X_2 \) and~\( X_3 \).
Then by the extended Cartan formula (\cref{lem:g-Cartan}), we have \(
d(\omega(X_i,X_j)) = d((X_i \wedge X_j) \hook \omega) = -[X_i,X_j]
\hook \omega = 0 \), showing that \( \omega(X_i,X_j) \) is constant.
Taking constant linear combinations of our vector fields we may
therefore assume that \( \omega(X_i,X_3) = 0 \), for \( i=1,2 \), and
that \( \omega(X_1,X_2) = \delta \in \{0,1\} \).

Now a multi-moment map \( \nu \colon M \to \Pg^* = \Lambda^3\g^* \cong
\bR \) has differential
\begin{equation}
  \label{eq:omega2}
  \begin{split}
    d\nu &= (X_1\wedge X_2\wedge X_3) \hook \alpha = (X_1\wedge
    X_2\wedge X_3) \hook \omega^2 \\
    &= \Cyclic_{1,2,3} \omega(X_1,X_2)\omega(X_3,\cdot)
    = \delta\, X_3\hook\omega.
  \end{split}
\end{equation}
Thus if \( \delta = 0 \), i.e. the orbits of the \( G \)-action are
isotropic, then \( \nu \) is constant.  Fixing now \( \delta = 1 \),
we note that if \( \mu_i \) is a symplectic moment map for \( X_i \),
then \( X_2\mu_1 = d\mu_1(X_2) = \omega(X_1,X_2) = \delta = 1 \).
Thus in this case \( X_1 \) and \( X_2 \) do not Poisson commute and
there is no symplectic moment map for the action of the whole of~\( G
\).  However, \( \nu = \mu_3 \) is a multi-moment map for the action
of~\( G \).

A second case is given by considering \( G = \SU(2) \) with generators
\( \Lel X_i \in \su(2) \) as in \cref{ex:su2}.  Since \( G \) is
compact and \( b_1(M) = 0 \), there is a symplectic moment map \( \mu
= (\mu_1,\mu_2,\mu_3) \colon M \to \su(2)^*\cong \bR^3 \) satisfying
\( L_{X_1}\mu_2 = -2\mu_3 \), etc.  As above we find that \(
L_{X_1}\mu_2 = d\mu_2(X_1) = -\omega(X_1,X_2) \).  For a multi-moment
map \( \nu \colon M \to \Pg[\su(2),2] \cong \bR \),
equation~\eqref{eq:omega2} gives
\begin{equation*}
  d\nu =  (X_1\wedge X_2\wedge X_3) \hook \alpha =  2
  \sum_{i=1}^3 \mu_i d\mu_i = d\norm{\mu}^2.
\end{equation*}
Thus a multi-moment map is \( \nu = \norm{\mu}^2 \).  It is unique up
to the addition of a constant. 

The quotient \( M \mred \SU(2) = \nu^{-1}(0)/\SU(2) \) is nothing
other than the symplectic quotient of \( M \) by \( \SU(2) \), and
thus inherits both a symplectic form \( \omega' \) and a closed
four-form \( \alpha' = (\omega')^2 \).

For \( t>0 \), the geometry of the reduction \( M \mred[\nu,t] \SU(2)
= \nu^{-1}(t)/\SU(2) \) is more complicated.  Part of the reason for
this is that even in the good case when \( \SU(2) \) acts freely on \(
M_t \), there is no canonical choice of connection form for the \(
\SU(2) \)-bundle \( M_t \to M \mred[\nu,t] \SU(2) \).  This problem is
remedied in geometries that come equipped with a metric.

\subsection{HyperKähler manifolds}
\label{sec:hK}

A variant of the construction considered above arises in the setting
of quaternionic geometry. A quaternion-Hermitian manifold \( Q \) is a
\( 4n \)-dimensional Riemannian manifold with a rank three subbundle
\( \mathcal G \subset \End(TQ) \) which is locally trivialised by
anti-commuting almost complex structures \( I \), \( J \) and \( K \)
that satisfy \( K = IJ \). In addition the Riemannian metric \( g \)
must be compatible with \( \mathcal G \), meaning \( g(\mathcal I
X,\mathcal I Y) \) for each \( X,Y\in T_xQ \) and \( \mathcal I\in
\mathcal G_x \); in particular \( \omega_I = g(I\cdot,\cdot) \), etc.,
are locally defined non-degenerate two-forms. A quaternion-Hermitian
manifold carries a non-degenerate four-form \( \Omega \) which may
locally be expressed as
\begin{equation*}
  \Omega = \omega_I\wedge\omega_I + \omega_J\wedge\omega_J +
  \omega_K\wedge\omega_K. 
\end{equation*}

In dimension eight and above one says that \( Q \) is
\emph{quaternionic Kähler} if the fundamental form is parallel, \(
\LC\Omega = 0 \).  This implies that \( \Omega \) is closed.  In
dimensions \( 12 \) and higher \( d\Omega = 0 \) is actually
equivalent to the quaternionic Kähler condition
\cite{Swann:symplectiques}.  \Textcite{Gray:Sp} showed that the
stabiliser of \( \Omega \) under the action of \( \GL(4n,\bR) \), \(
n>1 \), is the compact group \( \Sp(n)\Sp(1) \) and so \( \Omega \)
determines the metric \( g \).  In dimension four, these
considerations no longer hold and a quaternionic Kähler manifold is
instead defined to be an oriented Riemannian manifold which is
Einstein and self-dual.

If the subbundle \( \mathcal G \) can be globally trivialised by \(
I,J,K \) and these almost complex structures are integrable then we
have a hyper-Hermitian manifold. This will then be hyperKähler
provided that the two-forms \( \omega_I \), etc., are closed.

In \cite{Swann:MathAnn} it was shown that to any quaternionic Kähler
manifold \( Q^{4n} \) of positive scalar curvature one may associate a
hyperKähler manifold \( M^{4n+4} = \mathcal{U}(Q) \) which acts as a
hyperKähler generalisation of the twistor space; this is known as the
\emph{Swann bundle} and may be written as \( \mathcal U(Q) = \bR_{>0}
\times \mathcal S \), where \( \mathcal S \) is the bundle of triples
\( (I,J,K) \). Conversely given a \( (4n + 4) \)-dimensional
hyperKähler manifold \( M \) admitting a special type of \( \SU(2)
\)-action then a version of the Marsden-Weinstein reduction produces a
quaternionic-Kähler manifold of positive scalar curvature; this latter
reduction process can be realised in terms of multi-moment maps.

The relevant type of \( \SU(2) \)-symmetry often arises due to the
presence of a vector field \( X \) on \( (M^{4n+4},g,I,J,K) \), a
\emph{special homothety} (cf.~\cite{Poon-S:HKT-QKT}), with the
following properties:
\begin{equation*}
  \begin{gathered}
    \Ld_Xg = g,\quad \Ld_{IX}g = 0,\quad \Ld_{IX}I = 0,\quad\Ld_{IX}J
    = -K,\quad \Ld_{IX}K = J,\quad\text{etc.}
  \end{gathered}
\end{equation*}

Special homotheties generate a local action of \( \bH^* \) and in good
cases the vector fields \( IX \), \(JX \), \( KX \) integrate to give
an action of \( \SU(2) \) which is necessarily locally free. 

\begin{proposition}[\textcite{Madsen:mm-thesis}]
  \label{thm:qKmmmap}
  Let \( (M^{4n+4},g,I,J,K) \) be a hyperKähler manifold, and \( X
  \) a special homothety.  If \( IX,JX,KX \) generate a locally free
  action of \( \SU(2) \) then this action preserves \( \Omega \) and a
  multi-moment map \( \nu\colon M\to\bR\cong\Pg[\su(2),2]^* \) is
  given by
  \begin{equation*}
    \nu = -3\norm{X}^4.
  \end{equation*}
  \qed 
\end{proposition}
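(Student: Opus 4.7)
The plan is to verify the three requirements in \cref{def:multi}: that the $\SU(2)$-action preserves $\Omega$, that $\nu = -3\norm{X}^4$ is equivariant, and that $d\nu = p \hook \Omega$ for the generator $p = IX \wedge JX \wedge KX$ of the Lie kernel $\Pg[\su(2),2] \cong \bR$ recorded in \cref{ex:su2}.

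The key technical tool is the identity $\LC X = \tfrac12 \mathrm{id}$. Decomposing $\LC X = S + A$ into symmetric and skew parts, the condition $\Ld_X g = g$ forces $S = \tfrac12 \mathrm{id}$. Since $I, J, K$ are parallel on a hyperKähler manifold, $\Ld_{IX} g = 0$ reduces, after cancellation using the $I$-Hermitian property of $g$, to $IA + AI = 0$, and similarly $JA + AJ = 0$ and $KA + AK = 0$. But $A$ anticommuting with both $I$ and $J$ forces it to commute with $K = IJ$, contradicting the $K$-anticommutation unless $A = 0$. Two consequences follow: $[IX, X] = \LC_{IX} X - I \LC_X X = \tfrac12 IX - \tfrac12 IX = 0$, and similarly $[JX, X] = [KX, X] = 0$; and $d\norm{X}^2 (W) = 2 g(\LC_W X, X) = g(W, X)$, i.e.\ $d\norm{X}^2 = X^\flat$.

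Invariance of $\Omega$ follows because each of $IX, JX, KX$ is a Killing field that preserves its own Kähler form and permutes the other two up to sign, so that $\omega_I \wedge \omega_I + \omega_J \wedge \omega_J + \omega_K \wedge \omega_K$ is fixed. Invariance of $\nu$ --- which is the required equivariance since $\SU(2)$ acts trivially on the one-dimensional space $\Pg[\su(2),2]^*$ --- is immediate: $\Ld_{IX} \norm{X}^2 = (\Ld_{IX} g)(X,X) + 2g([IX, X], X) = 0$ by the previous step, and likewise for $JX$ and $KX$.

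For the multi-moment equation itself, I would expand each square via
\begin{equation*}
  (Y_1 \wedge Y_2 \wedge Y_3) \hook \omega^2 (W) = 2\bigl[\omega(Y_1, Y_2) \omega(Y_3, W) + \omega(Y_2, Y_3) \omega(Y_1, W) - \omega(Y_1, Y_3) \omega(Y_2, W)\bigr]
\end{equation*}
with $(Y_1, Y_2, Y_3) = (IX, JX, KX)$. Using $I^2 = J^2 = K^2 = -\mathrm{id}$, $IJ = K$ (and cyclic permutations) together with the orthogonalities $g(X, IX) = g(X, JX) = g(X, KX) = 0$, one finds that in each of $\omega_I^2$, $\omega_J^2$, $\omega_K^2$ exactly one pairing $\omega_\bullet(Y_i, Y_j)$ is non-zero and equal to $\pm \norm{X}^2$, and that the resulting one-form is $-2 \norm{X}^2 X^\flat$ in each case. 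Summing gives $(IX \wedge JX \wedge KX) \hook \Omega = -6\norm{X}^2 X^\flat$, which matches $d\nu = d(-3\norm{X}^4) = -6 \norm{X}^2 X^\flat$. The main obstacle is sign tracking in this final contraction; once $\LC X = \tfrac12 \mathrm{id}$ is in hand the remainder is essentially bookkeeping.
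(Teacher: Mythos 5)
Your proof is correct. There is nothing in the paper to compare it against: the proposition is imported from \textcite{Madsen:mm-thesis} with the \( \Box \) attached to the statement itself, so the paper supplies no argument. Your route is the natural one and runs parallel to the paper's own treatment of \( \SU(2) \) acting on \( (M,\omega^2) \) in \cref{sec:sympelctic}. The one non-formal input, \( \LC X = \tfrac12\,\mathrm{id} \), is derived correctly: since \( I,J,K \) are parallel, \( \Ld_{IX}g=0 \) forces the skew part \( A \) of \( \LC X \) to anticommute with \( I \), and anticommuting with \( I \) and \( J \) makes \( A \) commute with \( K=IJ \), so \( A=0 \). The consequences \( [IX,X]=0 \), \( d\norm{X}^2=X^\flat \), invariance of \( \Omega \) and of \( \norm{X}^4 \), and the contraction all check out; I verified that each of \( \omega_I^2,\omega_J^2,\omega_K^2 \) contributes exactly \( -2\norm{X}^2X^\flat \), the surviving pairings being \( \omega_I(JX,KX)=\norm{X}^2 \), \( \omega_J(IX,KX)=-\norm{X}^2 \), \( \omega_K(IX,JX)=\norm{X}^2 \). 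One caveat on normalisation: your expansion of \( (Y_1\wedge Y_2\wedge Y_3)\hook\omega^2 \) carries the overall factor \( 2 \) of the standard shuffle convention for \( \omega\wedge\omega \), whereas the paper's formula~\eqref{eq:omega2} writes the same contraction as a bare cyclic sum without that factor; under the latter convention the constant would be \( -\tfrac32 \) rather than \( -3 \). Since the stated value \( -3\norm{X}^4 \) is reproduced exactly by your (standard) convention, this is an internal inconsistency of the paper's normalisations rather than a flaw in your argument, but the constant does also depend on identifying \( \Pg[\su(2),2]^*\cong\bR \) via the generator \( IX\wedge JX\wedge KX \), whose brackets close with structure constants \( [IX,JX]=-KX \) rather than the \( -2KX \) normalisation of \cref{ex:su2}; it is worth stating which generator you pair against.
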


Any non-zero \( t\in\nu(M) \) is a regular value.  The level sets
correspond to \( \norm{X} \) is constant and the results of
\cite{Swann:MathAnn} show that \( M \mred[\nu,t] \SU(2) \) is a
quaternionic Kähler orbifold of positive scalar curvature.

\subsection{Holonomy Spin(7)}
\label{sec:holonomy-spin7}

A \( \Spin(7) \) structure is a geometry modelled on the form \(
\Phi_0 \) of equation~\eqref{eq:Spin7}.

\begin{definition}
  An eight-manifold \( M \) has a \emph{\( \Spin(7) \)-structure} if
  there is a form \( \Phi \in \Omega^4(M) \) such that \(
  (T_xM,\Phi_x) \) is linearly isomorphic to \( (\bR^8,\Phi_0) \) for
  each~\( x \in M \).
\end{definition}

Now \( \Phi \) determines a volume form and Riemannian metric on \( M
\) via the relations
\begin{equation*}
  \Phi^2 = 14\vol,\qquad ((X\wedge Y)\hook \Phi)^2 \wedge \Phi = 6
  \norm{X \wedge Y}_g^2\vol.
\end{equation*}
Comparing with \( \Phi_0 \) we see that \( \vol \) and \( g \)
correspond to the standard volume \( \vol_0 = e_{12345678} \) and
metric \( g_0 = \sum_{i=1}^8 e_i^2 \) on \( \bR^8 \).  We also see
that \( \Phi \) is a self-dual four-form \( \Hodge \Phi = \Phi \).  In
particular, a closed \( \Spin(7) \)-structure has \( \Phi \)~harmonic
and it follows that the holonomy group of \( g \) is contained in~\(
\Spin(7) \).  This is one of the two exceptional holonomies in the
Berger classification \cite{Berger:hol,Besse:Einstein}.  Examples of
metrics with holonomy exactly \( \Spin(7) \) are not easy to find.
Local existence was proved by \textcite{Bryant:exceptional}, the first
complete examples were produced by
\textcite{Bryant-Salamon:exceptional} and the first compact examples
were found by \textcite{Joyce:Spin7}.  The complete examples produced
by Bryant \& Salamon have many symmetries, in fact the symmetry group
acts with cohomogeneity one, so the principal orbit is of codimension
one.  Further systematic study of cohomogeneity one examples with
compact symmetry group has been made by
\textcite{Reidegeld:G2,Reidegeld:Spin7-Q-M}.  One sees that many of
the examples and candidates have compact symmetry groups of rank~\( 3
\), so an interesting class of \( \Spin(7) \)-manifolds are those with
\( T^3 \)-symmetry.

Given a closed \( \Spin(7) \)-structure~\( (M,\Phi) \) with free \( T^3
\)-symmetry, fix a basis \( \Lel U_1,\Lel U_2,\Lel
U_3 \) for \( \lt \cong \bR^3 \).  Then we have the following
two-forms on \( M \):
\begin{equation}
  \label{eq:omega-i}
  \omega_1 = U_2 \hook U_3 \hook \Phi,\quad
  \omega_2 = U_3 \hook U_1 \hook \Phi,\quad
  \omega_3 = U_1 \hook U_2 \hook \Phi.
\end{equation}
These forms are all closed, by the \cref{lem:g-Cartan}. 

To see the structure of these forms, we consider the geometry of \(
(\bR^7,\Phi_0) \).  Isolating \( e_1 \) in the
expression~\eqref{eq:Spin7} for \( \Phi_0 \) we have
\begin{equation}
  \label{eq:Spin7-G2}
  \begin{split}
    \Phi_0 &= e_1\wedge (e_{234} + e_{256} + e_{278} + e_{357} -
    e_{368} - e_{458} - e_{467}) \eqbreak + e_{5678} + e_{3478} +
    e_{3456} + e_{2468} - e_{2457} - e_{2367} - e_{2358} \\
    &= e_1\wedge \varphi_0 + \Hodge_7\varphi_0,
  \end{split}
\end{equation}
where we recognise \( \varphi_0 \) on \( V_7 = \Span{E_2,\dots,E_8} \)
as a rewritten version of \( \phi_0 \) in~\eqref{eq:G2} and \(
\Hodge_7 \) is the Hodge star operator on \( V_7 \) with respect to
the induced metric and volume.  In particular, we see that the
stabiliser of \( e_1 \) under the action of \( \Spin(7) \) is the
stabiliser of \( \varphi_0 \) which is the exceptional group~\( G_2
\).  The orbit of \( e_1 \) under the action of the compact group \(
\Spin(7) \) is thus of dimension \( \dim\Spin(7) - \dim G_2 = 21 - 14
= 7 \).  As the \( \Spin(7) \)-action preserves the metric \( g_0 \),
we conclude that \( \Spin(7) \) acts transitively on the unit sphere
\( S^7 \subset \bR^8 \).  Thus for any unit vector \( v \in \bR^8 \),
we have that \( v\hook \Phi_0 \) is a \( G_2 \)-form on \( v^\bot \).

We may now repeat this argument, isolating \( e_2 \) in the
expression for~\( \varphi_0 \) to get
\begin{equation*}
  \begin{split}
    \varphi_0 &= e_2\wedge(e_{34} + e_{56} + e_{78}) \eqbreak
    + e_{357} - e_{368} - e_{458} - e_{468} \\
    &= e_2 \wedge \omega + \psi_+.
  \end{split}
\end{equation*}
This time \( \omega \) is a symplectic form on \( V_6 =
\Span{E_3,\dots,E_8} \) and \( \psi_+ \) is the real part of the
complex volume form \( (e_3 + ie_4)\wedge(e_5+ie_6)\wedge(e_7+ie_8) \)
on \( V_6 \cong \bR^6 \cong \bC^3 \).  The stabiliser of \( e_2 \)
under the action of \( G_2 \) is the stabiliser of the pair \(
(\omega,\psi_+) \) which is~\( \SU(3) \).  The orbit of \( e_2 \in V_7
\) has dimension \( \dim G_2 - \dim \SU(3) = 14 - 8 = 6 \) and is just
the unit sphere \( S^6 \subset V_7 \).  Finally, the orbit of \( e_3
\in V_6 \) under the action of \( \SU(3) \) is \( S^5 = \SU(3) /
\SU(2) \).  This demonstrates the following well-known result,
cf. \textcite{Bryant:exceptional}.

\begin{lemma}
  \( \Spin(7) \) acts transitively on orthonormal pairs \( (v_1,v_2)
  \) and orthonormal triples \( (v_1,v_2,v_3) \) in~\( \bR^8 \).  \qed
\end{lemma}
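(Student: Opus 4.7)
The plan is simply to assemble the chain of stabilisers already computed in the paragraphs preceding the lemma. The key observation is that all the dimension counts needed for transitivity have been done, so only a clean assembly argument remains.

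First I would note that $\Spin(7)$ is compact and connected, so every orbit of $\Spin(7)$ acting on $\mathbb{R}^8$ is a closed submanifold, and to prove transitivity on a given homogeneous submanifold it suffices to match dimensions and invoke connectedness. Since the $\Spin(7)$-action preserves the metric $g_0$, the orbit of any unit vector lies in the unit sphere $S^7$; the computation $\dim\Spin(7) - \dim G_2 = 21 - 14 = 7 = \dim S^7$ already given in the text shows that this orbit is open in $S^7$, hence equal to it by connectedness. Thus $\Spin(7)$ acts transitively on unit vectors; by scaling, it acts transitively on vectors of any fixed length.

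Second, to handle orthonormal pairs $(v_1, v_2)$, I would reduce to a standard representative. By the first step, pick $g \in \Spin(7)$ with $g v_1 = e_1$; then $g v_2$ is a unit vector orthogonal to $e_1$, i.e., a unit vector in $V_7 = e_1^\perp$. The stabiliser of $e_1$ in $\Spin(7)$ is $G_2$, acting on $V_7$ preserving the $G_2$-form $\varphi_0$ and the induced metric. The text has already established the dimension count $\dim G_2 - \dim \SU(3) = 14 - 8 = 6 = \dim S^6$, so $G_2$ acts transitively on the unit sphere $S^6 \subset V_7$; applying this we can further move $g v_2$ to $e_2$, proving transitivity on orthonormal pairs.

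Third, for triples $(v_1, v_2, v_3)$, the same bootstrap applies one more time: after bringing $(v_1, v_2)$ to $(e_1, e_2)$, the remaining stabiliser is $\SU(3)$ acting on $V_6 = \Span{e_3,\dots,e_8}$, and its orbit of $e_3$ is $\SU(3)/\SU(2) = S^5 = $ the unit sphere in $V_6$, which is exactly where $v_3$ lands. This finishes the argument. No step is really a main obstacle — all the nontrivial work (identifying the successive stabilisers and performing the dimension matches) is already laid out before the statement, and the only thing to be careful about is justifying that an orbit of the correct dimension inside a connected sphere must fill the sphere, which follows from the orbit being open, closed, and non-empty.
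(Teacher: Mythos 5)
Your proposal is correct and follows essentially the same route as the paper: the lemma's \qed refers back to the preceding chain of stabiliser computations ($\Spin(7)\supset G_2\supset\SU(3)$ with orbits $S^7$, $S^6$, $S^5$), and your bootstrap argument is exactly how the paper intends these facts to be assembled. The only thing you add is the explicit open--closed--connected justification that an orbit of full dimension fills the sphere, which the paper leaves implicit.
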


In particular, \( \Spin(7) \)~acts transitively on the sets of unit
length simple bivectors \( u_1 \wedge u_2 \) and unit length simple
trivectors \( u_1 \wedge u_2 \wedge u_3 \) on~\( \bR^8 \).  Using the
first of these statements, we describe the form \( \omega_3 \)
of~\eqref{eq:omega-i} as pointwise corresponding to a multiple of \(
(E_1\wedge E_2)\hook \Phi_0 = e_{34} + e_{56} + e_{78} \).  Thus each
\( \omega_i \) is a two-form of rank~\( 6 \).

Furthermore, suppose that \( \nu \colon M \to \bR \) is a multi-moment
map for the action of \( T^3 \).  Then \( d\nu = (U_1\wedge U_2\wedge
U_3) \hook \Phi \) which corresponds to a multiple of \( (E_1\wedge
E_2\wedge E_3) \hook \Phi_0 = e_4 \).  This implies that on a level
set \( \nu^{-1}(t) \subset M \), the pull-back \( i^*\omega_3 \) of \(
\omega_3 \) under the inclusion map~\( i \) corresponds to a multiple
of \( e_{56} + e_{78} \).  One may prove that the \( T^3 \)-invariant
forms \( i^*\omega_1 \), \( i^*\omega_2 \), \( i^*\omega_3 \) each
vanishes on \( U_1 \), \( U_2 \) and \( U_3 \) and thus they descend
to two-forms on the four-manifold \( N_t = M \mred[\nu,t] T^3 \).  The
following terminology will be used:

\begin{definition}
  A triple of \( \sigma_1,\sigma_2,\sigma_3 \) of symplectic
  structures on a manifold \( N \) of dimension four is
  \emph{weakly coherent} if the forms are pointwise linearly
  independent, define the same orientation and the pairing \( \sigma_i
  \wedge \sigma_j \) has definite sign.
\end{definition}

A more detailed analysis of our situation gives the following
description of the quotients \( N_t \).

\begin{proposition}[\textcite{Madsen:Spin7T3}]
  Let \( (M,\Phi) \) be a closed \( \Spin(7) \)-structure.  Suppose \(
  T^3 \) acts freely on \( M \) preserving \( \Phi \) and with a
  multi-moment map \( \nu \).  Then for each \( t \in \nu(M) \), the
  four-manifold \( N_t = M \mred[\nu,t] T^3 \) admits a real-analytic
  weakly coherent triple of symplectic structures \(
  \sigma_1,\sigma_2,\sigma_3 \).  \qed
\end{proposition}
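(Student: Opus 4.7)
The plan is to realise the $\sigma_i$ as descents of the two-forms $\omega_1,\omega_2,\omega_3$ already introduced in equation \eqref{eq:omega-i} and then verify each required property by a pointwise reduction to the $\Spin(7)$-model $(\bR^8,\Phi_0)$.

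First I would establish that each $\omega_i$ is closed on $M$. Since $T^3$ is abelian, every bivector $\Lel U_j\wedge\Lel U_k$ lies in $\Pg[\lt,2]=\Lambda^2\lt$, so the extended Cartan formula (\cref{lem:g-Cartan}), combined with $d\Phi=0$, the $T^3$-invariance of $\Phi$ and $[\Lel U_j,\Lel U_k]=0$, yields $d\omega_i=0$. Next I would check that $i^*\omega_i$ is basic for the principal bundle $\nu^{-1}(t)\to N_t$, so that it descends to a closed two-form $\sigma_i\in\Omega^2(N_t)$. Invariance under the $T^3$-action follows from abelianness and the invariance of $\Phi$. For horizontality, $U_j\hook\omega_i$ vanishes whenever $j$ is one of the two indices used to build $\omega_i$ (by antisymmetry), and otherwise a short computation shows $U_i\hook\omega_i=\pm(U_1\wedge U_2\wedge U_3)\hook\Phi=\pm d\nu$ by the defining relation \eqref{eq:multi} of the multi-moment map, which pulls back to zero on the level set $\nu^{-1}(t)$.

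The heart of the argument is pointwise. At any $x\in\nu^{-1}(t)$ where the action is free, the transitivity of $\Spin(7)$ on orthonormal triples (the lemma immediately preceding the proposition) allows us to choose an adapted frame $(E_1,\dots,E_8)$ at $x$ with $\Phi_x=\Phi_0$ and $\Span{U_1,U_2,U_3}=\Span{E_1,E_2,E_3}$. As noted in the text, $d\nu$ then corresponds to a multiple of $e_4$, so $V_4=\Span{E_5,E_6,E_7,E_8}$ canonically models $T_{[x]}N_t$. Direct contraction gives that at $[x]$ the form $\sigma_3$ is a nonzero multiple of $e_{56}+e_{78}$, while $\sigma_1$ and $\sigma_2$ take the analogous shapes $e_{58}+e_{67}$ and $-e_{57}+e_{68}$ on $V_4$. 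These three two-forms involve disjoint monomials, hence are pointwise linearly independent; each is self-dual on $V_4$, so $\sigma_i\wedge\sigma_i$ is a positive multiple of $e_{5678}$, showing non-degeneracy and that the $\sigma_i$ induce a common orientation on $N_t$; and the cross pairings $\sigma_i\wedge\sigma_j$ for $i\neq j$ vanish, which is of definite sign. Combined with closedness from the previous step, this gives the required weakly coherent triple of symplectic forms.

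Finally, real-analyticity follows because a closed $\Spin(7)$-structure forces $\Phi$ to be harmonic and the underlying metric to be Ricci-flat with holonomy in $\Spin(7)$; such metrics are real-analytic in harmonic coordinates, the $T^3$-action and the multi-moment map $\nu$ inherit this regularity, and hence so do the quotient forms $\sigma_i$ on the smooth real-analytic manifold $N_t$. The main obstacle is setting up the pointwise normal form carefully: once the adapted frame is chosen via the transitivity lemma, everything reduces to a finite linear-algebraic computation on $V_4$, but bookkeeping the signs and the $\SU(2)$-gauge freedom in the choice of frame to ensure global well-definedness of the statements (as opposed to their verification at a single point) requires care.
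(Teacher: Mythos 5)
Your proposal is correct and follows essentially the same route as the paper, which states this result with a citation to Madsen's work but sketches exactly your argument in the surrounding text: closedness of the \( \omega_i \) via the extended Cartan formula, horizontality from antisymmetry and \( d\nu = (U_1\wedge U_2\wedge U_3)\hook\Phi \), the pointwise normal form obtained from transitivity of \( \Spin(7) \) on orthonormal triples, and real-analyticity from Ricci-flatness. The one point needing the care you already flag is that an adapted frame can align \( \Span{U_1,U_2,U_3} \) with \( \Span{E_1,E_2,E_3} \) but cannot make all three bivectors \( U_i\wedge U_j \) unit coordinate bivectors simultaneously, so the actual Gram matrix \( (q_{ij}) \) is a positive-definite congruence transform of the identity rather than the identity itself; this still yields weak coherence.
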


\noindent
The fact that the quotient geometry is real-analytic follows from the
remark that \( T^3 \) acts by isometries and that closed \( \Spin(7)
\) structures are Ricci-flat.  It follows that the vector fields \(
U_i \) are real-analytic.

To fully describe the relationship between the geometries of \( M \)
and~\( N_t \), one may construct a connection one-form \( \theta \in
\Omega^1(M,\lt) \) as follows. Let \( G = (g_{ij}) \) with \( g_{ij} =
g(U_i,U_j) \) and let \( U^\flat = (U_1^\flat,U_2^\flat,U_3^\flat) \),
where \( U_i^\flat = g(U_i,\cdot) \).  Then \( \theta \) is given by
\begin{equation*}
  \theta = U^\flat G^{-1}.
\end{equation*}
This satisfies \( \theta_i(U_j) = \delta_{ij} \) as required.  The
matrix \( G \) here turns out to be determined the geometry on \( N_t \).

\begin{lemma}[\textcite{Madsen:Spin7T3}]
  \( G^{-1} = h^2Q \), where \( h = 1/\sqrt{\det G} \) and \( Q =
  (q_{ij}) \) is the matrix given by
  \begin{equation*}
    \sigma_i \wedge \sigma_j = 2q_{ij}\vol
  \end{equation*}
  on \( N_t \).  \qed
\end{lemma}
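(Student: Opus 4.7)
The plan is to carry out a pointwise computation at a representative point \( x\in\nu^{-1}(t) \), exploiting transitivity of the \( \Spin(7) \)-action on orthonormal triples in \( \bR^8 \). First I would choose an oriented orthonormal basis \( e_1,\ldots,e_8 \) of \( T_xM \) so that \( \Phi_x = \Phi_0 \) and the subspace \( \operatorname{span}(U_1,U_2,U_3) \) at \( x \) coincides with \( \operatorname{span}(e_1,e_2,e_3) \). Writing \( U_i = \sum_{j=1}^3 A_{ij}\,e_j \), the matrix \( G \) is then \( G_x = AA^T \), so \( \det G = (\det A)^2 \) and \( h = \lvert\det A\rvert^{-1} \).

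Next I would pin down \( T_xN_t \) and its oriented volume. Combining \( d\nu = (U_1\wedge U_2\wedge U_3)\hook\Phi \) with the observation that \( e_{123} \) enters \( \Phi_0 \) only through the single term \( e_{1234} \) gives \( d\nu|_x = (\det A)\,e_4 \). Hence the horizontal complement of \( \operatorname{span}(U_1,U_2,U_3) \) inside \( T_x\nu^{-1}(t) = e_4^{\perp} \) may be identified with \( \operatorname{span}(e_5,e_6,e_7,e_8) \), carrying induced volume \( \vol_{N_t} = e_{5678} \).

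The core step is to express the forms \( \omega_\alpha \) on \( N_t \) in a canonical basis. Using the explicit formula~\eqref{eq:Spin7}, the pull-backs of \( e_{23}\hook\Phi_0 \), \( -e_{13}\hook\Phi_0 \) and \( e_{12}\hook\Phi_0 \) to \( N_t \) are the triple
\begin{equation*}
  \hat\tau_1 = -e_{58}-e_{67},\quad \hat\tau_2 = -e_{57}+e_{68},\quad \hat\tau_3 = e_{56}+e_{78},
\end{equation*}
an anti-self-dual basis on the oriented four-plane satisfying \( \hat\tau_i\wedge\hat\tau_j = 2\delta_{ij}\vol_{N_t} \). Since \( U_{\alpha+1}\wedge U_{\alpha+2} \) is a bivector in \( \operatorname{span}(e_1,e_2,e_3) \), its components in the basis \( (e_{23},-e_{13},e_{12}) \) form the \( \alpha \)-th column of the adjugate \( B := \operatorname{adj}(A) = (\det A)\,A^{-1} \), and one obtains the expansion \( \omega_\alpha|_{N_t} = \sum_i B_{i\alpha}\hat\tau_i \).

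The conclusion is then a short linear-algebra computation: orthogonality of the \( \hat\tau_i \) yields \( \sigma_\alpha\wedge\sigma_\beta = 2(B^TB)_{\alpha\beta}\vol_{N_t} \), so \( Q = B^TB = (\det A)^2(AA^T)^{-1} = (\det G)\,G^{-1} \), and dividing by \( \det G \) produces \( G^{-1} = h^2Q \). The main obstacle is the careful sign bookkeeping required to establish the orthonormality of the \( \hat\tau_i \) directly from the fourteen-term expression for \( \Phi_0 \); once that is secured, everything else is formal manipulation with the adjugate identity \( B^TB = (\det G)\,G^{-1} \).
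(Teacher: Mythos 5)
The paper does not actually prove this lemma: it is quoted from \cite{Madsen:Spin7T3} with the proof omitted, so there is no internal argument to compare against. Your pointwise computation is correct and is the natural continuation of the normal-form analysis the paper itself sets up in \cref{sec:holonomy-spin7} (transitivity of \( \Spin(7) \) on orthonormal triples, \( d\nu \sim e_4 \), \( i^*\omega_3 \sim e_{56}+e_{78} \)). I checked the key steps: with \( U_i=\sum_j A_{ij}e_j \) one has \( G=AA^T \); the restrictions of \( e_{23}\hook\Phi_0 \), \( -e_{13}\hook\Phi_0 \), \( e_{12}\hook\Phi_0 \) to \( \Span{e_5,\dots,e_8} \) are indeed \( -e_{58}-e_{67} \), \( -e_{57}+e_{68} \), \( e_{56}+e_{78} \), which satisfy \( \hat\tau_i\wedge\hat\tau_j=2\delta_{ij}e_{5678} \); the bivector \( U_{\alpha+1}\wedge U_{\alpha+2} \) has components given by the \( \alpha \)th column of \( \operatorname{adj}(A) \); and \( B^TB=(\det G)\,G^{-1} \) closes the argument. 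The only blemish is terminological: with the volume \( e_{5678} \) your \( \hat\tau_i \) are \emph{self-dual}, not anti-self-dual (they square to \( +2e_{5678} \)); this does not affect the computation, since only the products \( \hat\tau_i\wedge\hat\tau_j \) enter, and the sign of \( \vol \) is in any case fixed by the paper's convention that \( \sigma_i^2 \) and \( \vol \) induce the same orientation, consistent with positive definiteness of \( G^{-1} \).
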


\noindent
Here \( \vol \) denotes the volume form on \( N_t \) induced from the quotient.

As \( \nu^{-1}(t) \) is a \( T^3 \)-bundle over \( N_t \) its
curvature \( F = d\theta \) is a closed form with integral periods.
We write \( F \in \Omega^2_{\bZ}(N,\lt) \) to indicate this.
Conversely, given such an~\( F \), one may construct a principal \(
T^3 \)-bundle over \( N \) with curvature~\( F \).    Madsen
shows that \( F \) satisfies the following symmetry condition
\begin{equation}
  \label{eq:QA}
  F_i\wedge \sigma_j = F_j \wedge \sigma_i,\qquad\text{for all \( i,j \)}.
\end{equation}
It turns out that this data is sufficient to invert the construction.

\begin{theorem}[\textcite{Madsen:Spin7T3}]
  \label{thm:Spin7-T3}
  Suppose \( N \) is a connected four-manifold with a real-analytic
  weakly coherent symplectic triple \( (\sigma_1,\sigma_2,\sigma_3) \)
  and volume form \( \vol \), with the same orientation as \(
  \sigma_i^2 \).  For each real-analytic \( F \in
  \Omega^2_{\bZ}(N,\lt) \) satisfying~\eqref{eq:QA} there is a unique
  maximal connected closed \( \Spin(7) \)-structure \( (M,\Phi) \)
  with \( T^3 \)-symmetry and multi-moment map~\( \nu \), such that \(
  (N,\sigma_i,\vol,F) \) is the reduction of~\( M \) at level \( 0 \).
  \qed
\end{theorem}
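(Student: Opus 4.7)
The plan is to reverse-engineer the reduction procedure: from the data $(N,\sigma_i,\vol,F)$ I would construct a principal $T^3$-bundle $M$ with closed $\Spin(7)$-form $\Phi$ whose reduction at level zero recovers that data, and then establish uniqueness by real-analytic continuation.

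First, since $F \in \Omega^2_{\bZ}(N,\lt)$ has integral periods, standard bundle theory produces a principal $T^3$-bundle $P \to N$ equipped with a connection one-form $\theta \in \Omega^1(P,\lt)$ of curvature~$F$. I would take $M$ to be a maximal connected open neighbourhood of $P \times \{0\}$ inside $P \times \bR$, extend the $T^3$-action trivially in the $\bR$-direction and set $\nu = t$ for~$t$ the coordinate on $\bR$; the generating vector fields $U_i$ then satisfy $\theta_i(U_j) = \delta_{ij}$.

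Next I would write an explicit ansatz for~$\Phi$ in the coframe $(\theta_1,\theta_2,\theta_3,dt)$ together with the pullback of forms from~$N$. The two preceding lemmas fix most of the structure: $\omega_i = U_j \hook U_k \hook \Phi$ must descend on levels to~$\sigma_i$, the identity $d\nu = (U_1 \wedge U_2 \wedge U_3) \hook \Phi$ forces the coefficient of $dt \wedge \theta_1 \wedge \theta_2 \wedge \theta_3$, and $G^{-1} = h^2 Q$ with $h = 1/\sqrt{\det G}$ is dictated by~$Q$. Matching the pointwise model $\Phi_0$ of~\eqref{eq:Spin7} under the splitting into three vertical directions, the horizontal $dt$-direction and the four base directions, the required $\Phi$ takes the schematic form
\begin{equation*}
  \Phi = dt \wedge \theta_1 \wedge \theta_2 \wedge \theta_3
  + \Cyclic_{1,2,3} \theta_j \wedge \theta_k \wedge \sigma_i
  + \Theta,
\end{equation*}
where $\Theta$ is a sum of mixed and horizontal terms determined algebraically by $\sigma_i$, $\vol$ and~$h$; the weak coherence of the triple and positivity of $h$ make $\Theta$ exist uniquely and render $\Phi$ pointwise $\GL(8,\bR)$-equivalent to~$\Phi_0$.

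The main obstacle is the verification of $d\Phi = 0$. Expanding using $d\theta_i = \pi^* F_i$ and $d\sigma_i = 0$, the cross-terms between connection curvature and base symplectic structures take the form $\Cyclic_{1,2,3}(F_i \wedge \sigma_j - F_j \wedge \sigma_i) \wedge \theta_k$, and these vanish precisely by the symmetry hypothesis~\eqref{eq:QA}. The remaining contributions involve $t$-derivatives of $h$ and~$\Theta$ and furnish a Cauchy problem along the $\nu$-direction whose unique real-analytic solution near $t = 0$ is granted by the Cauchy-Kowalevski theorem applied to the prescribed real-analytic data. By construction $\Phi$ is $T^3$-invariant and $\nu$ equivariant, so $\nu$ is a multi-moment map and the reduction at level zero yields $(N,\sigma_i,\vol,F)$. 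For uniqueness and maximality, any other closed $\Spin(7)$-structure with the same reduction must coincide with this one near $\nu^{-1}(0)$ by the same algebraic rigidity; since closed $\Spin(7)$-structures are Ricci-flat and therefore real-analytic in harmonic coordinates, analytic continuation forces global agreement of the maximal connected extensions.
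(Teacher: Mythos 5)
Your proposal follows essentially the same route as the paper, which only sketches the argument here (the full proof being in the cited reference): build the principal $T^3$-bundle $P\to N$ from $F\in\Omega^2_{\bZ}(N,\lt)$, equip it with a $G_2$-type geometry determined by $(\sigma_i,\vol,F)$, and extend to a closed $\Spin(7)$-form on a maximal open subset of $P\times\bR$ by evolving in the transverse direction --- what the paper calls a modified Hitchin flow is exactly your Cauchy--Kowalevski treatment of $d\Phi=0$ along $t$, made explicit. The one point you pass over too quickly is that $d\Phi=0$ is an overdetermined system, so it must first be split into evolution equations and constraints (including showing that the condition~\eqref{eq:QA} propagates off the initial hypersurface) before Cauchy--Kowalevski can be invoked; this separation is precisely the content of the Hitchin-flow formulation.
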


The idea of the proof is to construct a \( G_2 \)-geometry on the \(
T^3 \)-bundle \( P \to N \) determined by \( F \).  One then uses a
modified variant of the Hitchin flow to extend this to a closed \(
\Spin(7) \)-geometry on a maximal open subset of \( P \times \bR \).
Explicit examples of this construction are given
in~\cite{Madsen:Spin7T3}.  Note that even the reduction of \( \bR^8 \)
by the maximal torus of \( \Spin(7) \) gives non-trivial tri-symplectic
geometries on \( \bR^4 \) that are not hyperKähler.

\subsection{G\textsubscript{2}-manifolds}
\label{sec:g2}

A \( G_2 \)-structure on a seven-manifold \( M \) is a choice of
three-form \( \phi \in \Omega^3(M) \) which on each tangent space \(
T_xM \) is linearly equivalent to the form \( \phi_0 \)
of~\eqref{eq:G2} on~\( \bR^7 \).  As in the \( \Spin(7) \) case, the
form \( \phi \) determines a volume form and metric on \( M \) via the
relation
\begin{equation}
  \label{eq:G2-g-vol}
  (X\hook\phi) \wedge (Y\hook\phi) \wedge \phi = 6g(X,Y)\vol.
\end{equation}
To interpret this formula, note that \( g \) is required to be
positive definite, which determines the sign of \( \vol \) and the
conformal class of \( g \).  Now the fact that \( \vol \) is required
to be of unit length with respect to \( g \), fixes the conformal
factor: scaling \( g \) by \( f^2 > 0 \) scales \( \vol \) by \( f^7
\) and the right-hand side of~\eqref{eq:G2-g-vol} scales by~\( f^9 \)
(cf.~\cite{Bryant:exceptional,Karigiannis:deformations}).

The form \( \phi_0 \) is stable in the sense of \cref{def:stable} and
so is its Hodge dual, the four-form \( \Hodge_7\phi_0 \).  However, the
stabiliser of \( \Hodge_7\phi_0 \) is \( G_2 \times \bZ_2 \) rather
than just~\( G_2 \), as may be seen by noting that \( -\phi_0 \)
defines the opposite orientation on~\( \bR^7 \).

There are a number of classes of \( G_2 \)-structures with closed
four-form that have particular interest for us.

\begin{definition}
  \label{def:G2-types}
  Let \( (M,\phi) \) be a \( G_2 \)-structure.  The structure is
  \begin{compactenum}
  \item \emph{cosymplectic} if \( d\Hodge_7\phi = 0 \),
  \item \emph{parallel} if \( d\phi = 0 \) and \( d\Hodge_7\phi = 0
    \),
  \item \emph{nearly parallel} if \( d\phi = 4\Hodge_7\phi \).
  \end{compactenum}
\end{definition}

Any oriented hypersurface~\( M \) of a closed \( \Spin(7) \)-manifold
\( Y \) carries a cosymplectic \( G_2 \)-structure.  Indeed write \(
i\colon M\to Y \) for the inclusion and let \( \mathbf N \) be a unit
normal to the hypersurface, then formula~\eqref{eq:Spin7-G2} gives
\begin{equation*}
  \Phi|_Y = \mathbf N^\flat \wedge \phi + \Hodge_7\phi
\end{equation*}
with \( \phi = i^*(\mathbf N \hook \Phi) \) defining the \( G_2
\)-structure.  This gives \( d\Hodge_7\phi = di^*\Phi = i^*d\Phi = 0
\), showing that the \( G_2 \)-structure is cosymplectic.

Parallel \( G_2 \)-structures are so-called because the equations \(
d\phi = 0 \) and \( d\Hodge_7\phi = 0 \) imply that \( \phi \) is
parallel for the Levi-Civita connection of \( g \), as shown by
\textcite{Fernandez-G:G2}.  This implies that \( g \) is Ricci-flat,
cf. \textcite{Bonan:G2-Spin7}, and that the holonomy is contained in \(
G_2 \).

Given a parallel \( G_2 \)-structure \( (M,\phi) \) we may consider \(
Y = S^1 \times M \) with the four-form \( \Phi = \theta \wedge \phi +
\Hodge_7\phi \), and see that \( \Phi \) gives a closed \( \Spin(7)
\)-structure.  If \( T^2 \) acts on~\( M \) preserving~\( \phi \),
then it also preserves \( \Hodge_7\phi \).  Now much as in
\cref{sec:product-manifolds}, a multi-moment map \( \nu \) for \( T^2
\) on \( (M,\phi) \) gives a multi-moment map \( \tilde\nu \) for \(
T^3 = S^1 \times T^2 \) acting on \( (Y = S^1 \times M,\Phi) \).  

The theory of reductions of \( T^3 \)-invariant closed \( \Spin(7)
\)-structures, discussed in the previous section, may now be applied
to \( T^2 \)-invariant parallel \( G_2 \)-structures.  This gives that
\( N_t = M \mred[\nu,t] T^2 \) carries \emph{coherent} triple of
symplectic structures \( \sigma_0,\sigma_1,\sigma_2 \), meaning that
they are weakly coherent and \( \sigma_0 \wedge \sigma_i = 0 \) for \(
i=1,2 \).  Also the curvature associated to the \( S^1 \)-factor of \(
Y \) is trivial, so \( \nu^{-1}(t) \to N_t \) is a \( T^2 \)-bundle
with curvature form \( F \in \Omega^2_{\bZ}(N_t,\lt) \), whose
self-dual part has no \( \sigma_0 \)-component, and which satisfies
the condition~\eqref{eq:QA}.  A direct description of this situation
is given in~\cite{Madsen-S:multi-moment}.

The third class of \cref{def:G2-types} is the manifolds of nearly
parallel \( G_2 \)-structures.  These also go under the name of weak
holonomy~\( G_2 \) in the terminology of \textcite{Gray:weak}, who
showed that the associated metric \( g \) is Einstein with positive
scalar curvature.  The simplest example of such a \( G_2 \)-structure
is the unit sphere \( S^7 \subset \bR^8 \), with the geometry induced
from the flat \( \Spin(7) \)-structure.  The discussion above shows
that \( S^7 = \Spin(7)/G_2 \) so the maximal torus \( T^3 \)~acts
preserving this geometry.  The \( \Spin(7) \)-geometry on \( \bR^8 \)
can be recovered as a warped product.

Indeed, suppose \( (M,\phi) \) is a nearly parallel \( G_2
\)-structure.  Put \( C(M) = \bR_{>0} \times M \) with the form
\begin{equation*}
  \Phi_C = s^3ds \wedge \phi + s^4\Hodge_7\phi,
\end{equation*}
where \( s \) is the parameter on \( \bR_{>0} \).  At level \( s \),
the induced structure on \( \{s\} \times M \) is given by \( s^3\phi
\), which is a \( G_2 \)-structure with metric~\( s^2g \).  This shows
that at each point \( \Phi_C \) is linearly isomorphic to the \(
\Spin(7) \)-form \( \Phi_0 \) of~\eqref{eq:Spin7}; so \( \Phi_C \)
defines a \( \Spin(7) \)-structure on~\( C(M) \).  As \( d\phi =
4\Hodge_7\phi \), the four-form \( \Hodge_7\phi \) is closed and we
find that
\begin{equation*}
  d\Phi_C = -s^3ds \wedge d\phi + 4s^3ds\wedge\Hodge_7\phi +
  s^4d\Hodge_7\phi = 0.
\end{equation*}
Thus \( (C(M),\Phi_C) \) is a closed \( \Spin(7) \)-structure.  Its
metric is the warped product \( g_C = ds^2 + s^2g \).  This
construction was used by \textcite{Baer:real} to relate the Killing
spinors of~\( (M,g) \) to parallel spinors of~\( (C(M),g_C) \).  For
more on the Killing spinor approach to these \( G_2 \)-structures see
\cite{Baum-FGK:spinors,Friedrich-KMS:G2}.

Now any symmetry of \( (M,\phi) \) induces a symmetry of \(
(C(M),\Phi_C) \) that preserves~\( s \).  Thus a nearly parallel \(
G_2 \)-structure with \( T^3 \)-symmetry corresponds to a certain
class of \( T^3 \)-invariant closed \( \Spin(7) \)-structures.  As in
the previous section, let \( U_1,U_2,U_3 \) be vector fields
generating the \( T^3 \)-action.  For \( u = U_1 \wedge U_2 \wedge U_3
\), the nearly parallel condition and the extended Cartan formula
(\cref{lem:g-Cartan}) give
\begin{equation*}
  u \hook \Hodge_7\phi = \tfrac14 u\hook d\phi = - \tfrac14 d(\phi(u)).
\end{equation*}
Thus \( \nu = -\tfrac14\phi(u) \) is a multi-moment map for the \( T^3
\)-action on \( (M,\Hodge\phi) \).  Also we have that
\begin{equation*}
  \begin{split}
    u \hook \Phi_C
    &= -s^3ds\, \phi(u) + s^4\, u \hook \Hodge_7\phi \\
    &= -\tfrac14 (d(s^4\phi(u))).
  \end{split}
\end{equation*}
So \( \nu_C = s^4\nu \) is a multi-moment map for the action on the
cone \( (C(M),\Phi_C) \).

Now for general \( t \), the level set \( \nu_C^{-1}(t) \) consists of
the \( (s,m) \) such that \( s^4\nu(m) = t \).  Since \( s\in\bR_{>0}
\), this relation simplifies when \( t=0 \) and we have that
\begin{equation*}
  \nu_C^{-1}(0) = C(\nu^{-1}(0)).
\end{equation*}
We will therefore only consider the reductions at level~\( 0 \).

The reduction \( N_C = \nu_C^{-1}(0) / T^3 \) of the cone carries a
weakly coherent triple of symplectic forms \(
\sigma_1,\sigma_2,\sigma_3 \) with for example \( \pi_C^*\sigma_3 =
i_C^*(U_1\hook U_2 \hook \Phi_C) \), where \( i_C\colon C(M)_0 =
\nu_C^{-1}(0) \hookrightarrow C(M) \) is the inclusion and \(
\pi_C\colon C(M)_0 \to N_C \) is the projection.  However,
\begin{equation*}
  \begin{split}
    \pi_C^*\sigma_3
    &= i_C^*(U_1 \hook U_2 \hook \Phi_C) \\
    &= i_C^*(s^3ds \wedge (U_1\hook U_2 \hook \phi) + \tfrac14
    U_1\hook U_2 \hook d\phi) \\
    &= i_C^*d(\tfrac14 s^4 \pi^* \eta_3),
  \end{split}
\end{equation*}
where \( \pi^*\eta_3 = i^*(U_1 \hook U_2 \hook \phi) \), with \(
i\colon M_0 = \nu^{-1}(0) \hookrightarrow M \) the inclusion and \(
\pi\colon M_0 \to N = M \mred T^3 \) the projection.  The fact
that \( \sigma_3 \) is non-degenerate on \( N_C = C(N) \) then
corresponds to \( \eta_3 \) being a contact structure on~\( N \).

In this way, we see that the reduction \( N = M \mred T^3 \) carries a
pointwise linearly independent triple \( (\eta_1,\eta_2,\eta_3) \) of
contact structures.  The condition that the \( \sigma_i \) are weakly
coherent corresponds to the requirements that the forms \( \eta_i
\wedge d\eta_i \) define the same orientation and that the symmetric
matrix with entries corresponding to \( \eta_i \wedge d\eta_j + \eta_j
\wedge d\eta_i \) is positive definite.  Thus one example is provided
by taking \( N = S^3 = \SU(2) \), with \( d\eta_1 = -2\eta_2 \wedge
\eta_3 \) etc.  Conversely the standard basis of one forms for \(
\SL(2,\bR) \) does not give a weakly coherent triple.

The remaining data for the \( \Spin(7) \)-geometry are the curvature
forms \( F_1,F_2,F_3 \) satisfying~\eqref{eq:QA}.  These forms are
invariant under the action of the Euler vector field on \( N_C \), so
they have the form \( F = d\log s \wedge a_i + b_i \), with \( a_i \in
\Omega^1(N) \) closed and \( b_i \in \Omega^2_{\bZ}(N) \).
Equation~\eqref{eq:QA} becomes
\begin{equation*}
  a_i \wedge d\eta_j + 4b_i \wedge \eta_j =
  a_j \wedge d\eta_i + 4b_j \wedge \eta_i,
  \qquad\text{for all \( i,j \).}  
\end{equation*}
Thus \( N \) carries three-contact forms and the closed forms \(
a_1,\dots,b_3 \).

\subsection{PSU(3)-structures}
\label{sec:psu3-structures}

A \( \PSU(3) \)-structure on an oriented eight-manifold~\( M \) is a
three-form \( \rho \in \Omega^3(M) \) pointwise modelled on~\( \rho_0
\) of equation~\eqref{eq:PSU3}.  These geometries were studied by
\textcite{Witt:triality,Witt:thesis}.  Such a form \( \rho \)
determines a metric~\( g \) and the five-form \( \Hodge\rho \).  A \(
\PSU(3) \)-structure is said to be \emph{harmonic} if \( d\rho = 0 \)
and \( d\Hodge\rho = 0 \).

The case when a harmonic \( \PSU(3) \)-structure admits a free
two-torus symmetry with multi-moment map \( \nu \) was discussed in
\cite{Madsen:mm-thesis} reinterpreting some results of Witt.  In this
case one has four two-forms given by
\begin{gather*}
  \omega_0 = -(d\nu)^\sharp \hook U_1 \hook U_2 \hook \Hodge\rho,
  \quad \omega_1 = U_1 \hook \rho, \\
  \quad \omega_2 = U_2 \hook \rho,
  \quad \omega_3 = U_1 \hook U_2 \hook \alpha^\sharp \hook
  \Hodge\rho, 
\end{gather*}
where \( \alpha \) is contraction of \( \rho \) by \( \omega_0 \).  At
a regular value \( t \) of \( \nu \), these forms induce two-forms~\(
\sigma_i \) on the reduction \( N = M \mred[\nu,t] T^2 \) and \(
\alpha \) induces a one-form \( a \).  Together \(
(a,\sigma_1,\sigma_2,\sigma_3) \) give \( N \) the structure of an \(
\SU(2) \)-manifold, see \textcite{Conti-S:Killing}, and a conformal
scaling of \( a \wedge \sigma_0 \) can be \( 2 \)-plectic.

\subsection{Homogeneous k-plectic manifolds}
\label{sec:homogeneous-k-plectic}

Suppose \( (M,\alpha) \) is a closed geometry of degree~\( r = k+1 \)
with a group~\( G \) of symmetries that acts transitively on~\( M \).
Then the equivariant map \( \Psi\colon M \to Z^r(\g) \) given
by~\eqref{eq:Psi} has image a single \( G \)-orbit in~\( Z^r(\g) \).
Conversely, we may use equation~\eqref{eq:Psi} to define closed
geometries that map to a given orbit \( G\cdot \Psi \subset Z^r(\g)
\), as follows.  Let \( K_\Psi \) be the connected subgroup of \( G \)
with Lie algebra \( \ker\Psi = \{\, \Lel X \in \g : \Lel X \hook \Psi
= 0 \, \} \).  Then for each closed subgroup \( H \) of \( G \)
containing \( K_\Psi \), equation~\eqref{eq:Psi} gives a well-defined
closed \( r \)-form \( \alpha \) on \( M = G/H \).

Now suppose that \( \Psi = \dP\beta \) for some \( \beta\in\Pg^* \).
If the map \( \dP \) is injective, then the orbits \( G\cdot\Psi \)
and \( G\cdot\beta \) are identified and the map \( \Psi\colon M\to
Z^r(\g) \) may now be interpreted as a map \( \nu\colon M\to \Pg^* \).
Injectivity of \( \dP \) is equivalent to the condition \( b_{r-1}(\g)
= 0 \) and the proof of \cref{thm:algebraic} shows that \( \nu \) is a
multi-moment map for the action of~\( G \).

\begin{theorem}
  \label{thm:orb1cgeo}
  Suppose \( G \) is a connected Lie group with \( b_k(\g)=0 \).
  Let \( \mathcal O = G \cdot \beta \subset \Pg^* \) be an orbit of \(
  G \) acting on the dual of the \( k \)th Lie kernel.  Then there
  are homogeneous closed geometries \( (G/H,\alpha) \), with \(
  \alpha\in\Omega^{k+1}(G/H) \) corresponding to \( \Psi = \dP\beta
  \), such that \( \mathcal O \) is the image of \( G/H \) under the
  (unique) multi-moment map~\( \nu \).

  The closed geometry may be realised on the orbit \( \mathcal O \)
  itself if and only if
  \begin{equation}
    \label{eq:stab}
    \stab_{\g}\beta = \ker(\dP\beta).
  \end{equation}
  In this situation, the orbit is \( k \)-plectic and \( \nu \) is
  simply the inclusion \( \mathcal O \hookrightarrow \Pg^* \).
\end{theorem}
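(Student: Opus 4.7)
My plan is to exploit the hypothesis $b_k(\g)=0$, which by the proof of~\cref{thm:algebraic} makes the $G$-equivariant map $\dP\colon\Pg^*\to Z^{k+1}(\g)$ injective.  Setting $\Psi=\dP\beta$, this identifies the orbit $\mathcal O=G\cdot\beta$ bijectively with $G\cdot\Psi\subset Z^{k+1}(\g)$, so it suffices to construct homogeneous closed geometries whose associated cocycle map~\eqref{eq:Psi} lands in $G\cdot\Psi$.

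For the existence part, I would pick a closed subgroup $H$ of $G$ whose Lie algebra satisfies $\h\subset\ker\Psi$ and for which $\Psi$ is fixed by $\Ad_H^*$, the natural choice being $H=K_\Psi$.  The Chevalley-Eilenberg correspondence then converts $\Psi$ into a closed $G$-invariant $(k+1)$-form $\alpha$ on $M=G/H$, and tracing left-translation through~\eqref{eq:Psi} shows $\Psi_M(gH)=\Ad_g^*\Psi$, so that the image of $\Psi_M$ is exactly $G\cdot\Psi$.  Setting $\nu=\dP^{-1}\circ\Psi_M$ yields a well-defined equivariant map $M\to\Pg^*$ with image $\mathcal O$, and the final computation in the proof of~\cref{thm:algebraic}---which requires exactly the hypothesis $b_k(\g)=0$---shows that $\nu$ is the unique multi-moment map.

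For the second part, taking $M=\mathcal O\cong G/\stab_G\beta$ forces $H=\stab_G\beta$, so the above construction applies precisely when $\stab_\g\beta\subset\ker\Psi$.  The reverse inclusion always holds: if $X\hook\Psi=0$ then the Chevalley-Eilenberg Cartan formula gives $\Ld_X\Psi=d(X\hook\Psi)+X\hook d\Psi=0$, and since $\dP$ intertwines the coadjoint actions and is injective, $\Ld_X\beta=0$ as well.  Thus both inclusions hold iff~\eqref{eq:stab} does.  Under~\eqref{eq:stab}, $\h=\ker\Psi$, so the quotient $\g/\h$ inherits a weakly non-degenerate form from $\Psi$, making $\alpha$ weakly non-degenerate at $eH$ and, by $G$-invariance, throughout $\mathcal O$; hence $(\mathcal O,\alpha)$ is $k$-plectic.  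Finally $\nu(gH)=\dP^{-1}(\Ad_g^*\Psi)=\Ad_g^*\beta$, which identifies $\nu$ with the inclusion $\mathcal O\hookrightarrow\Pg^*$.

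The main obstacle I expect is essentially book-keeping rather than conceptual: one must carefully check well-definedness, equivariance and closedness of $\alpha$ under the Chevalley-Eilenberg correspondence, and ensure that the various coadjoint sign conventions match throughout the computations of $\Psi_M$ and of $\nu$.
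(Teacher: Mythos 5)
Your proposal is correct and follows essentially the same route as the paper: injectivity of \( \dP \) coming from \( b_k(\g)=0 \) identifies \( G\cdot\beta \) with \( G\cdot\Psi \), the homogeneous geometries are built from \( \Psi \) via equation~\eqref{eq:Psi}, the multi-moment map property and uniqueness come from the final part of the proof of \cref{thm:algebraic}, and condition~\eqref{eq:stab} is precisely what makes \( \stab_G\beta \) an admissible isotropy group with \( \Psi \) descending to a weakly non-degenerate form on \( \g/\ker\Psi = T_\beta\mathcal O \). You in fact supply details the paper leaves implicit, notably the \enquote{only if} direction via the automatic inclusion \( \ker\Psi \subseteq \stab_\g\beta \).
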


\begin{proof}
  It only remains to prove the assertions of the last paragraph of the
  theorem.  We have \( \mathcal O = G/K \) with \( K = \stab_G\beta
  \), a closed subgroup of~\( G \).  Now equation~\eqref{eq:stab},
  shows that \( K \) has Lie algebra \( \ker(\dP\beta) \), so the
  component of the identity \( K^0 \) of \( K \) is \( K^0 = K_\Psi \)
  for \( \Psi = \dP\beta \).  In particular, \( \Psi \) vanishes on
  elements of \( \lk \) and induces a well-defined form on \(
  T_\beta\mathcal O = \g/\lk \).  The result now follows.
\end{proof}

\begin{remark}
  In the case when \( r=2 \), condition~\eqref{eq:stab} is automatic
  and we get the result of Kirillov-Kostant-Souriau that each orbit of
  \( \g^* \) is symplectic.
\end{remark}

\begin{example}
  Suppose \( G \) is a \( (k,k+1) \)-trivial Lie group.  Then, taking
  \( H=\{e\} \), we see that every \( \Psi \in Z^{k+1}(\g) \) gives
  rise to a closed geometry on \( G \) with multi-moment map whose
  image is diffeomorphic to the \( G \)-orbit of~\( \Psi \).
\end{example}

\section{Cohomology of Lie algebras}
\label{sec:cohomology}

Recall from \cref{sec:symmetries} that for a closed geometry of
degree~\( r \), multi-moment maps exist and are unique for any
symmetry group \( G \) which is \( (r-1,r) \)-trivial.  This condition
means that the cohomology groups~\( H^k(\g) \) of \( \g \) are trivial
in degrees \( k = r-1 \) and~\( r \).  While this is a concise
statement, it is not clear which, if any algebras, satisfy these
conditions.  In this section, we will discuss some techniques to gain
more information and show that there are in fact many such algebras.

Note first that the definitions~\eqref{eq:d-g} and~\eqref{eq:Hk-g}
give \( H^1(\g) = \ker d \leqslant \g^* \).  As \( d \) is dual to the
Lie bracket \( L = [\cdot,\cdot]\colon \Lambda^2\g \to \g \), the
vanishing of \( b_1(\g) = \dim H^1(\g) \) is equivalent to the
surjectivity of \( L \).  This says that \( b_1(\g) = 0 \) if and only
if \( \g \) is equal to its derived algebra \( \g' = [\g,\g] \).
Indeed \( b_1(\g) \) is exactly the codimension of \( \g' \) in \( \g
\).  Lie algebras with \( \g = \g' \) are called \emph{perfect}.  Any
semi-simple Lie algebra is perfect, but other examples may be
constructed as the semi-direct product \( \h \ltimes V \) of a
semi-simple algebra \( \h \) with a faithful representation~\( V \).
For example, the group of isometries of \( \bR^n \) with the standard
flat metric is perfect for each \( n\geqslant 3 \).

Interpretation of the vanishing of higher Betti numbers is more
complicated.  We gave some of the vanishing properties satisfied by
compact simple Lie groups in \cref{sec:symmetries}.  In particular,
these are \( (1,2) \)-trivial, leading to the usual existence and
uniqueness results for symplectic moment maps.  Furthermore any \(
(1,2) \)-trivial algebra is semi-simple.  The structure of the \(
(2,3) \)-trivial groups was described in~\cite{Madsen-S:multi-moment}
and classification results in small dimensions given in the same paper
and in~\cite{Madsen-S:2-3-trivial}.  In particular, we found that \(
(2,3) \)-trivial Lie algebras are always \emph{solvable}, meaning that
\( \g^m = \{ 0 \} \) for some \( m>0 \), where \( \g^m =
[\g^{m-1},\g^{m-1}] \) is the \( m \)th derived algebra of~\( \g \).

The general structure theory of Lie algebras says that any \( \g \)
has a maximal solvable ideal \( \lr \), the \emph{solvable radical},
and that the quotient \( {\g}/{\lr} \) is semi-simple.  In addition,
for any solvable Lie algebra \( \h \) the derived algebra \( \lk = \h'
\) is always \emph{nilpotent}, meaning that \( \lk_m = \{ 0 \} \) for
some \( m>0 \), where \( \lk_m = [\lk,\lk_{m-1}] \) and \( \lk_1 =
\lk' \).  In \cite{Madsen-S:multi-moment}, a result of
\textcite{Hochschild-Serre:Cohomology-Lie-algebras} relating the
cohomologies of \( \g \), \( \lr \) and \( {\g}/{\lr} \) was used
in~\cite{Madsen-S:multi-moment} to
prove:

\begin{lemma}
  \label{lem:b3}
  Any non-zero Lie algebra with \( b_3(\g) = 0 \) is solvable and thus
  has \( b_1(\g) \ne 0 \).  Such a \( \g \) is not nilpotent unless \(
  \g = \bR \) or \( \g = \bR^2 \).  \qed
\end{lemma}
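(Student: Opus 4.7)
The plan is to use the Levi--Malcev decomposition combined with a direct construction of a non-trivial class in $H^3(\g)$ arising from the Cartan 3-form on the semi-simple part, and then to invoke Dixmier's theorem on nilpotent cohomology for the last claim.

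For the solvability assertion, decompose $\g = \lr \rtimes \lie{s}$, where $\lr$ is the solvable radical and $\lie{s} \cong \g/\lr$ is semi-simple, with Lie-algebra projection $\pi\colon \g \to \lie{s}$ split by the Levi section $i\colon \lie{s} \hookrightarrow \g$.  Since $\pi \circ i = \mathrm{id}_{\lie{s}}$, the induced cohomology maps satisfy $i^* \circ \pi^* = \mathrm{id}$, so $\pi^*\colon H^*(\lie{s}) \hookrightarrow H^*(\g)$ is injective in every degree.  If $\lie{s}$ is non-zero, then the Cartan 3-form $\gamma(X,Y,Z) = \kappa([X,Y],Z)$ (with $\kappa$ the Killing form of $\lie{s}$) represents a non-trivial class in $H^3(\lie{s})$; its pullback $\pi^*\gamma$ therefore gives a non-trivial class in $H^3(\g)$, contradicting $b_3(\g) = 0$.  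Hence $\lie{s} = 0$ and $\g$ is solvable.  The implication $b_1(\g) \neq 0$ is immediate: a non-zero solvable Lie algebra must have $\g' \subsetneq \g$ (otherwise the derived series $\g \supseteq \g' \supseteq \g'' \supseteq \dotsb$ stabilises at $\g$ instead of terminating at $0$), so $b_1(\g) = \dim(\g/\g') \geq 1$.

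For the nilpotent case, the plan is to invoke Dixmier's theorem that any non-zero nilpotent Lie algebra of dimension $n$ satisfies $b_k(\g) \geq 2$ for $0 < k < n$, combined with the Poincaré duality $b_n(\g) = b_0(\g) = 1$ for finite-dimensional Lie algebras.  The hypothesis $b_3(\g) = 0$ then forces $n \leq 2$ (the case $n = 3$ would give $b_3 = 1$ and $n \geq 4$ would give $b_3 \geq 2$), and the only nilpotent Lie algebras of dimension at most $2$ are $\bR$ and $\bR^2$.

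The main subtlety is verifying non-triviality of $\pi^*\gamma$ in $H^3(\g)$.  This reduces to the observation that if $\pi^*\gamma = d\beta$ for some $\beta \in \Lambda^2\g^*$, then applying $i^*$ would yield $\gamma = d(i^*\beta)$, contradicting the classical non-triviality of the Cartan 3-form on any non-zero semi-simple Lie algebra.  Closedness of $\pi^*\gamma$ is immediate from $\pi$ being a Lie algebra homomorphism, since $d(\pi^*\gamma) = \pi^*(d\gamma) = 0$.  Everything else is routine, with the remaining work amounting to checking that the derived-series and Poincaré-duality arguments apply without modification in the present setting.
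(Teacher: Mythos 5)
Your proof is correct, but it reaches the solvability statement by a genuinely different mechanism than the one the paper relies on. The paper (quoting its earlier work) invokes the Hochschild--Serre theorem relating the cohomologies of \( \g \), the radical \( \lr \) and the semi-simple quotient \( {\g}/{\lr} \): since \( H^0(\lr)^{\g} = \bR \), the factor \( H^3({\g}/{\lr}) \) \enquote{feeds through} as a direct summand of \( H^3(\g) \). You instead use the Levi--Malcev splitting \( i\colon \lie{s} \hookrightarrow \g \) of the quotient map \( \pi \) to conclude \( i^*\circ\pi^* = \mathrm{id} \), hence injectivity of \( \pi^* \) on cohomology, and then push forward the Cartan three-form. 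Your route is more elementary --- it needs only Levi's theorem, functoriality of Chevalley--Eilenberg cohomology, and the classical non-exactness of the Cartan three-form, rather than the full Hochschild--Serre factorisation --- at the cost of yielding only the injectivity of \( \pi^* \) rather than the complete tensor decomposition of \( H^*(\g) \). The derived-series argument for \( b_1(\g)\ne 0 \) and the Dixmier argument for the nilpotent case match the paper's intent (the paper itself points to Dixmier's bound later in \cref{sec:cohomology}). One small imprecision: Poincar\'e duality \( b_n(\g) = b_0(\g) = 1 \) is \emph{not} true for arbitrary finite-dimensional Lie algebras --- it requires unimodularity (for the two-dimensional non-abelian algebra one has \( b_2 = 0 \)) --- but since nilpotent algebras are unimodular, your application of it in the case \( n = 3 \) is legitimate; you should simply state the hypothesis.
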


The essential point is that the semi-simple algebra \( {\g}/{\lr} \)
has \( b_3 \) non-zero, and this feeds through to \( b_3(\g) \) if \(
\lr \ne \g \).  For a solvable algebra \( \g' \) is strictly smaller
than \( \g \), so \( b_1(\g) \ne 0 \).

The \( (2,3) \)-trivial algebras were then found to be exactly those
solvable \( \g \) such that the derived algebra \( \lk = \g' \) has
codimension~\( 1 \) in~\( \g \) and such that \( {\g}/{\lk} \) acts
invertibly on the cohomology groups~\( H^i(\lk) \), for \( i=1,2,3 \).

\subsection{(3,4)-trivial Lie algebras}

Let us work towards a description of \( (3,4) \)-trivial Lie algebras.
As a first result we consider direct sums of algebras.

\begin{proposition}
  \label{prop:product}
  A non-trivial direct sum \( \g = \h_1 + \h_2 \) has \( b_3(\g) = 0
  \) if and only if each summand~\( \h_i \) is \( (2,3) \)-trivial.
  Consequently, \( \g \) is product of at most two summands.

  The direct sum \( \g = \h_1 + \h_2 \) is \( (3,4) \)-trivial if and
  only if each summand is \( (2,3,4) \)-trivial.
\end{proposition}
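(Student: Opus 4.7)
The argument rests on the Künneth formula for Lie algebra cohomology.  Since the Chevalley--Eilenberg complex of a direct sum splits as a tensor product $(\Lambda^*\g^*,d)\cong(\Lambda^*\h_1^*,d_1)\otimes(\Lambda^*\h_2^*,d_2)$, one obtains $H^k(\g)\cong\bigoplus_{i+j=k}H^i(\h_1)\otimes H^j(\h_2)$ and hence the Betti number identity
\begin{equation*}
  b_k(\g)=\sum_{i+j=k}b_i(\h_1)\,b_j(\h_2),
\end{equation*}
with all terms non-negative integers and $b_0(\h_i)=1$.

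The first statement now follows by expanding
\begin{equation*}
  b_3(\g)=b_3(\h_1)+b_2(\h_1)b_1(\h_2)+b_1(\h_1)b_2(\h_2)+b_3(\h_2).
\end{equation*}
With both $\h_i$ non-zero, $b_3(\g)=0$ forces each of the four non-negative summands to vanish.  In particular $b_3(\h_i)=0$ for each $i$, whereupon \cref{lem:b3} makes each $\h_i$ solvable with $b_1(\h_i)>0$; the vanishing mixed terms then give $b_2(\h_i)=0$, so each summand is $(2,3)$-trivial.  The converse direction reads off the same identity.  For the bound on the number of summands, I argue by contradiction: if $\g=\h_1+\h_2+\h_3$ had three non-zero summands with $b_3(\g)=0$, then applying the just-proved equivalence to $\g=\h_1+(\h_2+\h_3)$ makes $\h_2+\h_3$ itself $(2,3)$-trivial, and applying it once more to $\h_2+\h_3$ makes each of $\h_2,\h_3$ individually $(2,3)$-trivial with $b_1>0$.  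Künneth then yields $b_2(\h_2+\h_3)\geqslant b_1(\h_2)b_1(\h_3)>0$, contradicting $(2,3)$-triviality of $\h_2+\h_3$.

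For the $(3,4)$-trivial statement I expand
\begin{equation*}
  b_4(\g)=b_4(\h_1)+b_3(\h_1)b_1(\h_2)+b_2(\h_1)b_2(\h_2)+b_1(\h_1)b_3(\h_2)+b_4(\h_2).
\end{equation*}
If $\g$ is $(3,4)$-trivial, the previous paragraph already forces both $\h_i$ to be $(2,3)$-trivial, so the three middle terms vanish and $b_4(\g)=b_4(\h_1)+b_4(\h_2)=0$ forces $b_4(\h_i)=0$; hence each $\h_i$ is $(2,3,4)$-trivial.  The converse is immediate from the Künneth expansions of $b_3$ and $b_4$.  The only real subtlety is the appeal to \cref{lem:b3}: without the fact that $b_3(\h_i)=0$ already forces $b_1(\h_i)>0$, a nilpotent summand could make the cross-terms vanish for spurious reasons and both the conclusion $b_2(\h_i)=0$ and the ``at most two summands'' assertion would break down.
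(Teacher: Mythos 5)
Your proof is correct and follows essentially the same route as the paper: the Künneth expansions of \( b_3 \) and \( b_4 \), combined with \cref{lem:b3} to guarantee \( b_1(\h_i) > 0 \) and thereby kill the cross-terms. The one point of divergence is the ``at most two summands'' clause, where the paper simply cites an external result that \( (2,3) \)-trivial algebras are indecomposable, whereas you derive the needed contradiction directly from \( b_2(\h_2+\h_3) \geqslant b_1(\h_2)\,b_1(\h_3) > 0 \); this is a correct, self-contained replacement for that citation.
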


\begin{proof}
  This is almost direct from the Künneth formula, which gives the
  following sum of positive terms:
  \begin{equation}
    \label{eq:b3}
    b_3(\g) = b_3(\h_1) + b_3(\h_2) + b_2(\h_1)b_1(\h_2) +
    b_1(\h_1)b_2(\h_2).
  \end{equation}
  Thus \( b_3(\g) = 0 \) immediately gives \( b_3(\h_i) = 0 \).
  However, by \cref{lem:b3} we known that \( b_1(\h_i) \ne 0 \), so
  the vanishing of \( b_3(\g) \) also gives \( b_2(\h_i) = 0 \).

  Now a similar argument, given in~\cite{Madsen-S:multi-moment}, shows
  that \( (2,3) \)-trivial algebras are not direct sums of smaller
  ideals.  Thus the summands \( \h_i \) are not direct sums and \( \g \)
  has at most two summands.

  In the second case, the Künneth formula gives
  \begin{equation}
    \label{eq:b4}
    b_4(\g) = b_4(\h_1) + b_4(\h_2) + b_2(\h_1)b_2(\h_2) +
    b_3(\h_1)b_1(\h_2) + b_1(\h_1)b_3(\h_2),
  \end{equation}
  so the extra condition \( b_4(\g) = 0 \) forces \( b_4(\h_i) = 0 \)
  too.

  The converse statements are immediate from \eqref{eq:b3} and
  \eqref{eq:b4}.
\end{proof}

To go further and study other cases we need to use stronger
techniques.  Our main tool will be the Hochschild-Serre spectral
sequence of a Lie algebra~\( \g \) with respect to an ideal~\( \lk \).
We will consider the case when the \( \lk \) contains the derived
algebra \( \g' \).  Then the quotient algebra \( \la = {\g}/{\lk} \)
is Abelian of rank at most \( b_1(\g) \).  This algebra acts on the
cohomology of \( \lk \) via
\begin{equation}
  \label{eq:a-action}
  \Lel A \cdot [\alpha] = [A \hook d\alpha],
  \qquad\text{for \( \Lel A \in \la =
  {\g}/{\lk} \), \( [\alpha] \in H^q(\lk) \).}
\end{equation}
Indeed the above formula defines an action of \( \g \) for which \(
\lk \) acts trivially.  Note that here \( d \) is the differential in
\( \g \); we will write \( d_0 \) for the differential in \( \lk \),
so \( \alpha \in \ker d_0 \) in~\eqref{eq:a-action}.  Moreover, in our
case with \( \la \) Abelian, this action induces the coboundary map \(
d_1 \) on the cochains
\begin{equation*}
  C^p(\la,H^q(\lk)) = \Lambda^p\la^* \otimes H^q(\lk)
\end{equation*}
via
\begin{equation*}
  (d_1f)(\Lel a) = \sum_{i=1}^{p+1} f(\Lel a_{\wedge i}),
\end{equation*}
for \( \Lel a \in \Lambda^{p+1}\la \).

The Hochschild-Serre spectral sequence
\cite{Hochschild-Serre:Cohomology-Lie-algebras} has \( E_2 \)-page
given by the cohomology of the operator \( d_1 \) above:
\begin{equation*}
  E_2^{p,q} = H^p(\la,H^q(\lk)).
\end{equation*}
Note that we have
\begin{equation*}
  H^0(\la,H^q(\lk)) =  \{\, b \in H^q(\lk) : \Lel A\cdot b = 0 \
  \text{for all \( \Lel A \in \lk \)} \,\} = H^q(\lk)^{\g}
\end{equation*}
the fixed-point set of the action of \( G \) on \(
H^q(\lk) \).  Also note that \( E_2^{p,q} = 0 \) for \( p > \dim \la
\).

Given the \( E_2 \)-page of the spectral sequence, the general theory
defines maps \( d_2\colon E_2^{p,q} \to E_2^{p+2,q-1} \) induced by
the exterior derivative \( d \) in \( \g \) and sets \( E_3^{p,q} \)
to be the corresponding cohomology group.  More generally, \(
d_r\colon E_r^{p,q} \to E_r^{p+r,q-r+1} \) and so the spectral
sequence stabilises at level \( r = \dim \la \).  So \( E_\infty^{p,q}
= E_{\dim a}^{p,q} \) and one then has
\begin{equation*}
  H^k(\g) \cong \bigoplus_{p+q=k} E_\infty^{p,q}.
\end{equation*}
Note that if we choose a linear splitting of the exact sequence
\begin{equation*}
  0 \to \lk \to \g \to \la \to 0
\end{equation*}
then the image of \( W = \lk^* \) in \( \g^* \) has \( dW \subset
\Lambda^2W + \la^*\wedge W + \Lambda^2\la^* \).  In particular, the
differential of \( \g \) on \( E_0^{p,q} \) has components in \(
E_0^{p,q+1} \), \( E_0^{p+1,q} \) and \( E_0^{p+2,q-1} \).  Thus this
is more general than the spectral sequence of a bicomplex.

We are now ready to state our first characterisation result, the proof
of which will be given after some discussion of consequences.

\begin{theorem}
  \label{thm:3-4-structure-1}
  A Lie algebra \( \g \) is \( (3,4) \)-trivial if and only if \( \g
  \) is solvable and for any codimension one ideal \( \lk \)
  containing \( \g' \) one has \( H^i(\lk)^{\g} = 0 \) for \( i =
  2,3,4 \).  Here \( H^i(\lk)^{\g} \) is the part of the cohomology of
  \( \lk \) that is invariant under the action of \( \g \).
\end{theorem}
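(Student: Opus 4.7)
The plan is to apply the Hochschild-Serre spectral sequence to an ideal $\lk \subset \g$ of codimension one containing $\g'$. Such an ideal exists for any non-zero solvable $\g$: the quotient $\g/\g'$ is a non-zero abelian Lie algebra (strictness follows from solvability, as a perfect solvable Lie algebra is trivial), and any codimension one subspace of it pulls back to an ideal of the required form. In the forward direction this solvability is free: $b_3(\g) = 0$ forces $\g$ solvable by \cref{lem:b3}. In the backward direction it is part of the hypothesis. In either case, $\la = \g/\lk$ is one-dimensional abelian, spanned by a single class $\Lel A$ acting on each $H^q(\lk)$ by $[\alpha] \mapsto [A \hook d\alpha]$.

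The central computation is to read off the $E_2$-page. Since $\dim \la = 1$, the Lie algebra cohomology of $\la$ with coefficients in any module $V$ reduces to $H^0(\la,V) = \ker(\Lel A \cdot) = V^{\g}$ and $H^1(\la,V) = \coker(\Lel A \cdot) = V_{\g}$, with $H^p(\la, V) = 0$ for $p \geqslant 2$. Consequently all higher differentials $d_r$ with $r \geqslant 2$ land in columns with $p \geqslant 2$, so they vanish and the sequence collapses at $E_2$. Summing along diagonals yields
\[
  b_k(\g) = \dim H^k(\lk)^{\g} + \dim H^{k-1}(\lk)_{\g}.
\]
The key observation is now that $\Lel A$ acts as a single linear endomorphism of the finite-dimensional space $H^q(\lk)$, so rank-nullity gives the dimensional identity $\dim H^q(\lk)^{\g} = \dim H^q(\lk)_{\g}$ for every $q$.

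Specialising to $k = 3, 4$ therefore gives
\[
  b_3(\g) = \dim H^3(\lk)^{\g} + \dim H^2(\lk)^{\g}, \qquad b_4(\g) = \dim H^4(\lk)^{\g} + \dim H^3(\lk)^{\g},
\]
and both vanish precisely when $H^i(\lk)^{\g} = 0$ for $i = 2, 3, 4$. This proves both implications at once: the forward direction because $b_3 = b_4 = 0$ forces each of the three invariant spaces to vanish for every admissible $\lk$, and the backward direction because the hypothesis makes the right-hand sides above vanish. The main subtlety, and the reason the criterion can be phrased purely in terms of invariants, is the identification of $E_2^{1,q}$ with the coinvariants combined with the dimension equality $\dim V^{\g} = \dim V_{\g}$; this is what converts the mixture of invariants and coinvariants naturally produced by Hochschild-Serre into the cleaner condition stated in the theorem.
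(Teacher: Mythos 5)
Your proposal is correct and follows essentially the same route as the paper: the Hochschild--Serre spectral sequence for a codimension-one ideal $\lk \supseteq \g'$, collapse at $E_2$ since only columns $p=0,1$ survive, and the identification of $E_2^{1,q}$ with the invariants via the equality $\dim\ker = \dim\coker$ for an endomorphism of a finite-dimensional space (which the paper phrases as $d_1$ being a map between spaces of equal dimension). Your additional remarks on the existence of such an ideal for solvable $\g$ and on why the higher differentials vanish are correct and merely make explicit what the paper leaves implicit.
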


This result already gives a number of examples of \( (3,4) \)-trivial
algebras.

\begin{example}
  \label{ex:Abelian}
  Let \( \lk \) be an Abelian algebra \( \bR^m \) of dimension \( m
  \).  The differential in \( \lk \) is zero and \( H^k(\lk) =
  \Lambda^k\lk^* \) for each \( k \).  Let \( T \) be a diagonalisable
  operator acting by \( T(\Lel K_i) = \lambda_i\Lel K_i \), where \(
  \Lel K_1,\dots,\Lel K_m \) is a basis of eigenvectors.  Write \(
  \kappa_1,\dots,\kappa_m \) for the dual basis of \( \lk^* \).  Then
  \( T \) acts on \( H^k(\lk) \) with eigenvalue \( \lambda_{i_1} +
  \dots + \lambda_{i_k} \) on \( \kappa_{i_1} \wedge \dots \wedge
  \kappa_{i_k} \).  We thus have that \( H^i(\lk)^T = 0 \) for \(
  i=2,3,4 \) if and only if \( \lambda_i \ne
  -\lambda_j,-\lambda_j-\lambda_k,-\lambda_j-\lambda_k-\lambda_\ell \)
  whenever \( i,j,k,\ell \) are distinct.  This then gives a \( (3,4)
  \)-trivial algebra \( \g \) defined by
  \begin{equation*}
    \g = \bR\Lel A + \lk ,
  \end{equation*}
  where \( [\Lel A,\Lel K_i] = T\Lel K_i = \lambda_i\Lel K_i \).  Note
  that if some \( \lambda_i \) is~\( 0 \), then this occurs for only
  one index~\( i \) and \( \g \) splits as a product \( \bR + \h \),
  with \( \Lel T \) acting invertibly on \( \h' \cong \bR^{m-1} \).

  Some concrete \( (3,4) \)-trivial non-product examples are given as
  follows.  First consider \( \lk = \bR^2 \).  Then \( \g =
  (0,12,\mu.13) \), \( \mu \ne 0,-1 \), is \( (3,4) \)-trivial.  Here
  the notation means that \( \g^* \) has a basis \( e_1,e_2,e_3 \)
  with \( de_1 = 0 \), \( de_2 = e_1 \wedge e_2 \) and \( de_3 = \mu
  e_1 \wedge e_3 \).  For a higher-dimensional case with \( \lk =
  \bR^4 \), another example is provided by the algebras \( \g =
  (0,12,13,14,\mu.15) \) with \( \mu \ne 0,-1,-2,-3 \).  For
  general~\( m \), by taking \( \lambda_i \) strictly positive for
  each \( i \), we obtained \( (3,4) \)-trivial algebras in all
  dimensions.
\end{example}

\begin{example}
  \label{ex:positive}
  Suppose \( \lk \) is \emph{positively graded}, meaning that as a
  vector space \( \lk = \lk_1 \oplus \lk_2 \oplus \dots \oplus \lk_r
  \) with \( [\lk_i,\lk_j] \subset \lk_{i+j} \) for all \( i,j \).
  Such a \( \lk \) is necessarily nilpotent.  Choosing a linear
  operator \( T \) on \( \lk \), we may obtain a Lie algebra \( \g =
  \bR\Lel A + \lk \) with \( [\Lel A,\Lel K] = T(\Lel K) \) if only if
  \( T[\Lel K_1,\Lel K_2] = [T(\Lel K_1),\Lel K_2] + [\Lel K_1,T(\Lel
  K_2)] \) for all \( \Lel K_i \in \lk \).  In the positively graded
  situation we may thus take \( T(\Lel K) = j\Lel K \), for \( \Lel K
  \in \lk_j \).  

  Every nilpotent algebra of dimension at most \( 6 \) admits a
  positive grading (see \cite{Madsen-S:2-3-trivial} for concrete
  gradings), so each of these nilpotent Lie algebras may be realised
  as the derived algebra of a \( (3,4) \)-trivial algebra.  As a
  concrete example, the seven-dimensional algebra
  \begin{equation*}
    (0,12,3.13,4.14+23,5.15+24,6.16+25,7.17+34+26)
  \end{equation*}
  obtained from \( (0,0,12,13,14,23+15) \) via the grading given by the
  weights \( (1,3,4,5,6,7) \) is \( (3,4) \)-trivial.
\end{example}

\Cref{thm:3-4-structure-1} has the following consequence, which
completes the characterisation \cref{prop:product} of \( (3,4)
\)-trivial algebras that are direct sums.

\begin{corollary}
  Let \( \g \) be a Lie algebra.  Write \( \lk = \g' \) for the
  derived algebra and \( \la = \g / \lk \).  Then \( \g \) is \(
  (2,3,4) \)-trivial if and only if \( \g \) is solvable, \( \la \)
  has dimension \( 1 \) and \( H^i(\lk)^{\g} = \{0\} \), for \(
  i=1,2,3,4 \).
\end{corollary}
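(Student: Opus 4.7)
The plan is to obtain the corollary by combining two characterisations that are already at our disposal: the $(2,3)$-trivial characterisation recalled in the paragraph preceding \cref{lem:b3}, and the $(3,4)$-trivial characterisation of \cref{thm:3-4-structure-1}. No further spectral-sequence work is needed, since being $(2,3,4)$-trivial is exactly the conjunction of being $(2,3)$-trivial and $(3,4)$-trivial.

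For the forward implication, I would first note that a $(2,3,4)$-trivial algebra is in particular $(2,3)$-trivial, so the $(2,3)$-trivial characterisation gives that $\g$ is solvable, that $\lk = \g'$ has codimension one in $\g$ (hence $\dim \la = 1$), and that $H^i(\lk)^{\g} = 0$ for $i = 1,2,3$. Secondly, since $(2,3,4)$-triviality also entails $(3,4)$-triviality, \cref{thm:3-4-structure-1} applied to the codimension-one ideal $\lk = \g'$ gives $H^i(\lk)^{\g} = 0$ for $i = 2,3,4$. Together these cover the full range $i = 1,2,3,4$ and match the conclusion.

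For the converse, I would observe that under the assumption $\dim \la = 1$ the derived algebra $\lk = \g'$ is the only codimension-one ideal containing $\g'$, so the hypothesised vanishing of $H^i(\lk)^{\g}$ for $i = 1,2,3$ activates the $(2,3)$-trivial criterion and yields $b_2(\g) = b_3(\g) = 0$, while the vanishing for $i = 2,3,4$ activates \cref{thm:3-4-structure-1} and yields $b_4(\g) = 0$; together these give $(2,3,4)$-triviality.

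There is no genuine analytical obstacle; the only thing to watch carefully is the bookkeeping of indices, namely that the ranges $\{1,2,3\}$ and $\{2,3,4\}$ of cohomological vanishing produced by the two criteria overlap on $\{2,3\}$ and jointly cover $\{1,2,3,4\}$, and that the \emph{any codimension-one ideal} quantifier in \cref{thm:3-4-structure-1} collapses to the single choice $\lk = \g'$ precisely because $\dim \la = 1$.
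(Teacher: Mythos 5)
Your proposal is correct and follows essentially the same route as the paper: the published proof likewise invokes the characterisation of \( (2,3) \)-trivial algebras (codimension-one derived algebra with \( H^i(\g')^{\g}=\{0\} \) for \( i=1,2,3 \)) and combines it with \cref{thm:3-4-structure-1} applied to \( \lk=\g' \). Your extra remarks about the overlap of the index ranges and the collapse of the \enquote{any codimension-one ideal} quantifier are just the bookkeeping the paper leaves implicit.
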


\begin{proof}
  By~\cite{Madsen-S:multi-moment} any \( (2,3) \)-trivial algebra has
  \( \codim \g' = 1 \) and \( H^i(\g')^{\g} = \{0\} \), for \( i=1,2,3
  \).  Combining this with \cref{thm:3-4-structure-1} applied to \(
  \lk = \g' \) gives the result.
\end{proof}

\begin{proof}[of \cref{thm:3-4-structure-1}]
  By \cref{lem:b3} we know \( \g \) is solvable, so \( \g' \) has
  codimension at least one.  Now with respect to a codimension one
  ideal \( \lk \), the \( E_1 \)-page of the spectral sequence has
  row~\( q \) given by
  \begin{equation*}
    \begin{tikzpicture}
      \matrix (Eone) [matrix of math nodes,nodes in empty cells,column
      sep=4ex,nodes={minimum width=6ex}] {
      q+1 & \cdots \\
      q & H^q(\lk) &\la^*\otimes H^q(\lk)& 0 &0&\cdots \\
      q-1 & \cdots \\
      & 0 & 1 & 2 & 3 & \strut\cdots \\
      }; \draw (Eone-1-1.north east) |- (Eone-4-6.north east);
      \draw[->] (Eone-2-2) -- (Eone-2-3) node[above,midway,scale=0.7]
      {\( d_1 \)};
    \end{tikzpicture}
  \end{equation*}
  For the \( E_2 \)-terms we thus have \( E_2^{0,q} = \ker d_1 =
  H^q(\lk)^{\g} \) and \( E_2^{1,q} = \coker d_1 \).  But \( \dim\la =
  1 \), so \( d_1 \) is a map between vector spaces of the same
  dimension.  This implies \( \coker d_1 \) and \( \ker d_1 \) have
  the same dimension and so we may identify \( E_2^{1,q} \) with \(
  H^q(\lk)^{\g} \), non canonically.

  The \( E_2 \)-page of the spectral sequence is thus
  \begin{equation*}
    \begin{tikzpicture}
      \matrix (Etwo) [matrix of math nodes,nodes in empty cells,column
      sep=2ex,nodes={minimum width=6ex}] {
      \vdots & \cdots \\
      4 & H^4(\lk)^{\g} &H^4(\lk)^{\g}& 0 &0&\cdots \\
      3 & H^3(\lk)^{\g} &H^3(\lk)^{\g}& 0 &0&\cdots \\
      2 & H^2(\lk)^{\g} &H^2(\lk)^{\g}& 0 &0&\cdots \\
      1 & H^1(\lk)^{\g} &H^1(\lk)^{\g}& 0 &0&\cdots \\
      0 & \strut\bR & \bR& 0 &0&\cdots \\
      & 0 & 1 & 2 & 3 & \strut\cdots \\
      }; \draw (Etwo-1-1.north east) |- (Etwo-7-6.north east);
    \end{tikzpicture}
  \end{equation*}
  The spectral sequence degenerates at the \( E_2 \)-term and we
  conclude that
  \begin{equation*}
    H^3(\g) \cong H^3(\lk)^{\g} + H^2(\lk)^{\g},\quad H^4(\g) \cong
    H^4(\lk)^{\g} + H^3(\lk)^{\g},
  \end{equation*}
  from which the result follows.
\end{proof}

In the case when \( \n \) is a nilpotent Lie algebra, one may use the
above spectral sequence to prove Dixmier's result
\cite{Dixmier:nil-cohomology} that \( b_k(\n) \geqslant 2 \) for each
\( 0< k < \dim\n \), since in this situation \( \g = \n \) acts
nilpotently on \( H^q(\lk) \) and so \( H^q(\lk)^{\g} \) is non-zero
if \( H^q(\lk) \) is non-zero.  A refinement of the above argument was
also used by \textcite{Cairns-J:Betti} to improve Dixmier's bounds.

\begin{proposition}
  \label{prop:3-4-trivial}
  A Lie algebra \( \g \) with derived algebra \( \g' \) of codimension
  at least two is \( (3,4) \)-trivial if and only if for each ideal \(
  \lk \) of \( \g \) containing \( \g' \) one has \( H^i(\lk)^{\g} = 0
  \) for \( i = 1,2,3,4 \).
\end{proposition}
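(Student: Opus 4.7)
The plan is to apply the Hochschild-Serre spectral sequence to the abelian extension $0 \to \lk \to \g \to \la \to 0$ for suitable choices of ideal $\lk \supseteq \g'$; since $\lk$ contains the derived algebra, $\la := \g/\lk$ is automatically abelian, and we have $E_2^{p,q} = H^p(\la, H^q(\lk)) \Rightarrow H^{p+q}(\g)$ with $\la$ acting on $H^q(\lk)$ via~\eqref{eq:a-action}. This is a direct generalisation of the setup used in \cref{thm:3-4-structure-1}, but now with $\dim \la \geqslant 2$.

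The key technical ingredient, replacing the dimension count that did the work in the codimension-one case, is the following vanishing lemma: if $\la$ is abelian and $V$ is a finite-dimensional $\la$-module with $V^\la = 0$, then $H^p(\la, V) = 0$ for every $p$. I would prove this by extending scalars to $\bC$ and decomposing $V$ into generalised $\la$-weight spaces $V = \bigoplus_\lambda V_\lambda$. The assumption $V^\la = 0$ forces the weight-zero piece to vanish, since commuting operators acting nilpotently on a non-zero space always admit a joint kernel. For each non-zero weight $\lambda$, the Chevalley-Eilenberg differential on $\Lambda^\bullet \la^* \otimes V_\lambda$ is the Koszul differential $\lambda \wedge \cdot$ up to a nilpotent perturbation, and a filtration spectral sequence reduces exactness to that of the Koszul complex for the non-zero linear form $\lambda$ on $\la$.

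For the \enquote{if} direction, the hypothesis $\codim \g' \geqslant 2$ permits selecting a codimension-two ideal $\lk \supseteq \g'$ by pulling back a codimension-two subspace of the abelian quotient $\g/\g'$. The assumption supplies $H^q(\lk)^\g = 0$ for $q = 1, 2, 3, 4$, and the Key Lemma immediately annihilates $E_2^{p,q}$ for all $p$ and all $q \geqslant 1$. The remaining row $q = 0$ satisfies $E_2^{p,0} = H^p(\la,\bR) = \Lambda^p \la^*$, which vanishes for $p \geqslant 3$ because $\dim \la = 2$. Consequently both the $p+q = 3$ and $p+q = 4$ anti-diagonals of $E_\infty$ vanish, giving $H^3(\g) = H^4(\g) = 0$ and hence $(3,4)$-triviality.

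The \enquote{only if} direction is the delicate one: given only the vanishing of $H^3(\g)$ and $H^4(\g)$, one must recover $H^i(\lk)^\g = 0$ for $i = 1, 2, 3, 4$ and every ideal $\lk \supseteq \g'$. Applying \cref{thm:3-4-structure-1} to codimension-one ideals sandwiched appropriately between $\lk$ and $\g$ handles the cases $i = 2, 3, 4$, since a non-zero invariant class there already produces a non-trivial contribution to $H^i(\g)$ or $H^{i+1}(\g)$ in the codimension-one collapse. The main obstacle is the case $i = 1$ at arbitrary codimension: a non-trivial $\la$-invariant in $H^1(\lk)$ must be shown to yield a surviving class on the $p+q = 3$ or $p+q = 4$ line of $E_\infty$, but with $\dim \la \geqslant 2$ the higher differentials $d_r$ ($r \geqslant 2$) now have potential sources in other positions (no longer ruled out on dimensional grounds) that could absorb such a class, so the propagation argument must track these differentials carefully — this is where the proof will require the most care.
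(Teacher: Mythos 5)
Your \enquote{if} direction is sound and is in fact a little cleaner than the paper's: the lemma that \( V^{\la}=0 \) forces \( H^p(\la,V)=0 \) for all \( p \) when \( \la \) is abelian (joint generalised weight spaces plus exactness of the Koszul complex for a non-zero weight) kills every \( E_2^{p,q} \) with \( 1\leqslant q\leqslant 4 \) at once, whereas the paper only establishes the three cases \( p=0,1,2 \) that it needs for \( \dim\la=2 \).  The genuine gap is the \enquote{only if} direction, which you leave open.  Two things go wrong in your plan for it.  First, feeding codimension-one ideals \( \lk_1 \) with \( \lk\subset\lk_1\subset\g \) into \cref{thm:3-4-structure-1} yields vanishing of \( H^i(\lk_1)^{\g} \), but there is no map relating this to \( H^i(\lk)^{\g} \) for the smaller ideal \( \lk \): the two invariant cohomologies are computed from different complexes and neither vanishing implies the other, so \( i=2,3,4 \) are not actually handled.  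Second, you explicitly flag \( i=1 \) as unresolved without identifying the mechanism that makes it work.

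The paper's necessity argument runs entirely inside the spectral sequence of a single codimension-two ideal \( \lk \) and rests on two observations you are missing.  (a) The row complex \( E_1^{\bullet,q} \) has the shape \( V\to 2V\to V \) with \( V=H^q(\lk) \), so a dimension count shows that vanishing of the middle cohomology \( E_2^{1,q} \) forces both \( E_2^{0,q}=H^q(\lk)^{\g}=0 \) and \( E_2^{2,q}=0 \); since the \( d_2 \)'s into and out of the middle column are zero, \( E_2^{1,2} \) and \( E_2^{1,3} \) are direct summands of \( H^3(\g) \) and \( H^4(\g) \), and this settles \( i=2,3 \).  (b) The equivalence \( E_2^{2,q}=0 \iff H^q(\lk)^{\g}=0 \), proved by restricting the two commuting operators to the joint generalised \( 0 \)-eigenspace and applying Engel's theorem.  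Given (a), the \( d_2 \) differentials out of the \( 0 \)-column vanish on the relevant diagonals, so \( H^3(\g)\cong E_2^{2,1} \) and \( H^4(\g)\cong E_2^{0,4}=H^4(\lk)^{\g} \), and (b) converts \( E_2^{2,1}=0 \) into \( H^1(\lk)^{\g}=0 \).  Your vanishing lemma contains the linear algebra needed for (b), but you would have to state and use its converse (simultaneous non-vanishing of \( H^0 \) and \( H^{\dim\la} \) whenever the joint generalised \( 0 \)-weight space is non-zero) together with the dimension count (a); neither appears in your outline.  Finally, your worry about ideals of \enquote{arbitrary codimension} does point at something real: the proof, and indeed the correctness of the statement, concern codimension-two ideals --- for the \( (3,4) \)-trivial algebra \( (0,0,13+24,14) \) the codimension-one ideal spanned by \( \Lel E_2,\Lel E_3,\Lel E_4 \) has \( H^1(\lk_1)^{\g}\ne0 \) --- so one should not attempt to prove the condition for ideals other than those of codimension two.
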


The reason for giving this result in addition to
\cref{thm:3-4-structure-1} is that we will see later that we can
often assume that \( \codim \g' \leqslant 2 \) and therefore take \(
\lk = \g' \) to be nilpotent.

\begin{example}
  \label{ex:Abelian-2}
  Suppose \( \lk \) is Abelian as in \cref{ex:Abelian}.  We may now
  consider two commuting diagonalisable operators \( T_1 \) and \( T_2
  \) acting on~\( \lk \).  Then \( \lk \) splits as a direct sum of
  common eigenspaces of \( T_1 \) and \( T_2 \).  We wish to consider
  when the Lie algebra given by
  \begin{equation}
    \label{eq:g-codim-2}
    \begin{gathered}
      \g = \Span{\Lel A_1,\Lel A_2} + \lk,\\
      [s\Lel A_1 + t\Lel A_2,\Lel K] = (sT_1+tT_2)(\Lel K),
      \quad\text{for \( s,t \in \bR \), \( \Lel K \in \lk \)},
    \end{gathered}
  \end{equation}
  is \( (3,4) \)-trivial.

  The cohomology conditions of \cref{prop:3-4-trivial} are satisfied
  if \( T_1 \) and \( T_2 \) have no common \( 0 \)-eigenspace on \(
  \Lambda^i\lk^* \) for \( i=1,2,3,4 \).  This is a vanishing
  condition on finitely many linear combinations of the eigenvalues of
  \( T_1 \) and of \( T_2 \).  When it is satisfied we may thus find a
  generic linear combination \( T = sT_1 + tT_2 \) such that the
  action of \( T \) on these \( \Lambda^i\lk^* \) has trivial kernel.
  Taking \( T_1 = T \), we are now free to have \( T_2 \) any linear
  transformation of \( \lk \) that commutes with \( T_1 \), and need
  not assume that \( T_2 \) is diagonalisable.

  Thus for example when \( \lk \cong \bR^2 \), we see that the algebra
  \begin{equation*}
    (0,0,13+24,14)
  \end{equation*}
  is \( (3,4) \)-trivial.
\end{example}

\begin{example}
  If \( \lk \) is positively graded, as in \cref{ex:positive}, we may
  take \( T_1 = T \) as in that example and let \( \Lel A_2 \) be any
  derivation~\( T_2 \) of \( \lk \) commuting with \( T_1 \).  Then
  \eqref{eq:g-codim-2} defines a \( (3,4) \)-trivial algebra.  A
  simple example is obtained by taking \( \lk \) to be the Heisenberg
  algebra \( (0,0,12) \).  This has a positive grading with weights \(
  (1,1,2) \).  A second derivation~\( T_2 \) acts on \( \lk^* \) by \(
  e_1 \mapsto e_2 \), \( e_2 \mapsto -e_1 \), \( e_3 \mapsto 0 \) and
  this commutes with \( T_1 \).  We thus conclude that
  \begin{equation*}
    (0,0,13 + 24,14 - 23,2.15)
  \end{equation*}
  is \( (3,4) \)-trivial.  Another choice for \( T_2 \) is the
  nilpotent transformation \( e_1 \mapsto e_2 \), \( e_2 \mapsto 0 \),
  \( e_3 \mapsto 0 \) of \( \lk^* \), which gives the \( (3,4)
  \)-trivial algebra
  \begin{equation*}
    (0,0,13+24,14,2.15).
  \end{equation*}
\end{example}

\begin{proof}[of \cref{prop:3-4-trivial}]
  This time \( \la^*\cong \bR^2 \) and \( \Lambda^2\la^*\cong \bR \),
  so the \( E_1 \)-page of the spectral sequence has row~\( q \)
  isomorphic to
  \begin{equation*}
    \begin{tikzpicture}
      \matrix (Eone) [matrix of math nodes,nodes in empty cells,column
      sep=4ex,nodes={minimum width=6ex}] {
      q+1 & \cdots \\
      q & H^q(\lk) & 2H^q(\lk)& H^q(\lk) &0&\cdots \\
      q-1 & \cdots \\
      & 0 & 1 & 2 & 3 & \strut\cdots \\
      }; \draw (Eone-1-1.north east) |- (Eone-4-6.north east);
      \draw[->] (Eone-2-2) -- (Eone-2-3) node[above,midway,scale=0.7]
      {\( d_1 \)}; \draw[->] (Eone-2-3) -- (Eone-2-4)
      node[above,midway,scale=0.7] {\( d_1 \)};
    \end{tikzpicture}
  \end{equation*}
  Since the maps \( d_1 \) on the bottom row \( q=0 \) are zero, the
  relevant part of the \( E_2 \)-page is
  \begin{equation*}
    \begin{tikzpicture}
      \matrix (Etwo) [matrix of math nodes,nodes in empty cells,column
      sep=4ex,nodes={minimum width=6ex}] {
      \vdots & \cdots \\
      4 & H^4(\lk)^{\g} & E_2^{1,4}& E_2^{2,4} &0&\cdots \\
      3 & H^3(\lk)^{\g} & E_2^{1,3}& E_2^{2,3} &0&\cdots \\
      2 & H^2(\lk)^{\g} & E_2^{1,2}& E_2^{2,2} &0&\cdots \\
      1 & H^1(\lk)^{\g} & E_2^{1,1}& E_2^{2,1} &0&\cdots \\
      0 & \strut\bR & \bR^2& \bR &0&\cdots \\
      & 0 & 1 & 2 & 3 & \strut\cdots \\
      }; \draw (Etwo-1-1.north east) |- (Etwo-7-6.north east);
      \draw[->] (Etwo-2-2) -- (Etwo-3-4); \draw[->] (Etwo-3-2) --
      (Etwo-4-4); \draw[->] (Etwo-4-2) -- (Etwo-5-4);
    \end{tikzpicture}
  \end{equation*}
  The maps \( d_2 \) from and to the middle column \( E_2^{1,p} \) are
  zero, and the spectral sequence degenerates at the \( E_3 \)-level,
  so
  \begin{gather*}
    H^3(\g) \cong E_3^{0,3} + E_2^{1,2} + E_3^{2,1},\\
    H^4(\g) \cong E_3^{0,4} + E_2^{1,3} + E_3^{2,2}.
  \end{gather*}
  Thus \( b_3(\g) = 0 = b_4(\g) \) implies that \( E_2^{1,2} = 0 =
  E_2^{1,3} \).  This says that the middle cohomologies of the \( E_1
  \)-page on rows \( q=2 \) and \( q=3 \) are zero.  However, for a
  sequence \( V \overset{\alpha}{\rightarrow} 2V
  \overset{\beta}{\rightarrow} V \), vanishing of the middle
  cohomology implies that \( \alpha \) is injective and \( \beta \) is
  surjective.  Thus the cohomologies at each end of these rows are also
  zero.  In particular,
  \begin{equation*}
    H^2(\lk)^{\g} = 0 \quad\text{and}\quad H^3(\lk)^{\g} = 0.
  \end{equation*}
  The \( E_2 \)-page is now
  \begin{equation*}
    \begin{tikzpicture}
      \matrix (Etwo) [matrix of math nodes,nodes in empty cells,column
      sep=4ex,nodes={minimum width=6ex}] {
      \vdots & \cdots \\
      4 & H^4(\lk)^{\g} & E_2^{1,4}& E_2^{2,4} &0&\cdots \\
      3 & 0 & 0 & 0 &0&\cdots \\
      2 & 0 & 0 & 0 &0&\cdots \\
      1 & H^1(\lk)^{\g} & E_2^{1,1}& E_2^{2,1} &0&\cdots \\
      0 & \strut\bR & \bR^2& \bR &0&\cdots \\
      & 0 & 1 & 2 & 3 & \strut\cdots \\
      }; \draw (Etwo-1-1.north east) |- (Etwo-7-6.north east);
    \end{tikzpicture}
  \end{equation*}
  all the \( d_2 \) maps marked in the first picture of the \( E_2
  \)-page are zero and
  \begin{equation*}
    H^3(\g) \cong E_2^{2,1},\qquad
    H^4(\g) \cong H^4(\lk)^{\g}.
  \end{equation*}
  Thus \( (3,4) \)-trivial implies
  \begin{equation*}
    H^4(\lk)^{\g} = 0.
  \end{equation*}

  To prove the necessity of the cohomology conditions it only remains
  to show that \( E_2^{2,1} = 0 \) if and only if \( H^1(\lk)^{\g} = 0
  \).  In fact to get sufficiency we will show that \( H^q(\lk)^{\g} =
  0 \) is equivalent to \( E_2^{2,q} = 0 \).

  Write \( V = H^q(\lk) \otimes \bC = H^q(\lk)_\bC \) and let \( \Lel
  A, \Lel B \) be a basis for \( \la \).  The space \( (E_2^{2,q})_\bC
  \) is the cokernel of \( d_1 \colon V\times V \to V \) with \(
  d_1(f_1,f_2) = A f_2 - B f_1 \), where \( A \), \( B \) are the
  linear operators of the action of \( \Lel A \) and \( \Lel B \) on
  \( V \).  Thus \( (E_2^{2,q})_\bC \) is \( V/(\im A+\im B)\),
  whereas \( H^q(\lk)^{\g}_\bC \) is \( \ker A \cap \ker B \).

  Now \( A \) and \( B \) commute, so each preserves the generalised
  eigenspaces of the other.  Decompose \( V \) as a direct sum of the
  common generalised eigenspaces \( V = \bigoplus E(\lambda) \), where
  \( E(\lambda_1,\lambda_2) = E_A(\lambda_1) \cap E_B(\lambda_2) =
  \ker((A-\lambda_1)^n) \cap \ker((B-\lambda_2)^n) \), \( n = \dim V
  \).  We see that \( A \) or \( B \) is invertible on each space \(
  E(\lambda) \) with \( \lambda \ne (0,0) \).  Thus \( \ker A \cap
  \ker B \) is a subspace of \( E(0) \) and \( V/(\im A+\im B) \) is a
  quotient space of \( E(0) \).

  On \( E(0) \), the operators \( A \) and \( B \) are nilpotent and
  commute.  They thus generate a nilpotent Lie algebra and by Engel's
  Theorem, there is a basis \( v_1,\dots,v_m \) for \( E(0) \) such
  that \( A \) and \( B \) are upper triangular.  In particular \( \im
  A + \im B \leqslant \Span{v_1,\dots,v_{m-1}} \) and \( v_1 \in \ker
  A \cap \ker B \).  Thus \( V/(\im A+\im B) \) or \( \ker A\cap\ker B
  \) is zero, if and only if \( E(0) \) is zero.  This proves that \(
  H^q(\lk)^{\g} = 0 \) if and only if \( E_2^{2,q} = 0 \).

  We thus have that the given cohomology conditions are necessary.  To
  see that they are sufficient, note that the vanishing of \(
  H^q(\lk)^{\g} \) implies that the sequence on the \( q \)-row of the
  \( E_1 \)-page has the form \( V \overset{\alpha}{\rightarrow} 2V
  \overset{\beta}{\rightarrow} V \) with \( \alpha \) injective and \(
  \beta \) surjective.  It follows that the middle cohomology is also
  zero and that \( E_2^{p,q} \) is zero for all \( 1 \leqslant p
  \leqslant 4 \).  This implies that \( b_3(\g) = 0 = b_4(\g) \) as
  required.
\end{proof}

\Cref{thm:3-4-structure-1,prop:3-4-trivial} give a full structural
description of the \( (3,4) \)-trivial algebras with \( \g' \) of
codimension at most two.  The next result gives a couple of conditions
under which this codimension assumption is guaranteed.

\begin{proposition}
  \label{prop:codim-2}
  A Lie algebra \( \g \) with \( b_3(\g) = 0 \) has derived algebra \(
  \lk = \g' \) of codimension at most two if either
  \begin{compactenum}
  \item \( \g \) is split, so \( \g \) is the semi-direct product of
    \( \la = {\g}/{\lk} \) and \( \lk \), or
  \item \( H^1(\lk)^{\g} = \{0\} = H^2(\lk)^{\g} \).
  \end{compactenum}
\end{proposition}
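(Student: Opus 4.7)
The plan is to show in both cases that \( b_3(\g) \geqslant \binom{\dim\la}{3} \), where \( \la = \g/\lk \); combined with the hypothesis \( b_3(\g) = 0 \), this forces \( \dim\la\leqslant 2 \) as required. By \cref{lem:b3}, \( \g \) is solvable, hence \( \la \) is abelian, and \( \dim H^3(\la) = \binom{\dim\la}{3} \). In case (i), the semidirect product structure supplies a Lie algebra section \( s\colon\la\to\g \) of the quotient map \( \pi\colon\g\to\la \). Since \( s^*\circ\pi^* \) is the identity on \( H^*(\la) \), the induced \( \pi^*\colon H^*(\la)\hookrightarrow H^*(\g) \) is injective and the bound follows by taking degree three.

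For case (ii), the idea is to apply the Hochschild-Serre spectral sequence for the ideal \( \lk \). Its bottom row has \( E_2^{p,0} = \Lambda^p\la^* \) with no outgoing differentials, so \( E_\infty^{3,0} \) is the quotient of \( E_2^{3,0} = \Lambda^3\la^* \) by the images of the only two potentially nonzero incoming differentials \( d_2\colon E_2^{1,1}\to E_2^{3,0} \) and \( d_3\colon E_3^{0,2}\to E_3^{3,0} \); all further \( d_r \) are forced to vanish since they originate at \( E_r^{3-r,r-1} \) with \( 3-r<0 \). The source of the second is a subquotient of \( E_2^{0,2} = H^2(\lk)^{\g} \), which vanishes by hypothesis. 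To kill the first I would prove the following module-theoretic lemma and apply it to \( V = H^1(\lk) \), whose \( \la \)-invariants coincide with \( H^1(\lk)^{\g} = 0 \):

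\medskip
\emph{Lemma.} If \( \la \) is an abelian Lie algebra and \( V \) is a finite-dimensional \( \la \)-module with \( V^{\la} = 0 \), then \( H^p(\la,V) = 0 \) for all \( p \).
\medskip

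\noindent With \( E_2^{1,1} = H^1(\la,H^1(\lk)) = 0 \) as well, this yields \( E_\infty^{3,0} = \Lambda^3\la^* \); this is a graded piece of the associated graded of \( H^3(\g) \), so \( b_3(\g) \geqslant \binom{\dim\la}{3} \), as wanted.

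The main obstacle is establishing the lemma. I would prove it by extending scalars to \( \bC \) and decomposing \( V\otimes\bC \) into common generalised eigenspaces \( V(\lambda) \) under \( \la \). The hypothesis eliminates the zero eigenvalue, so on each surviving summand \( V(\lambda) \) with \( \lambda\neq 0 \) there exists \( \Lel A\in\la \) acting invertibly. Because \( \la \) is abelian, the coadjoint action on \( \la^* \) is trivial, so the Lie derivative \( L_A \) on the cochain complex \( \Lambda^\bullet\la^* \otimes V(\lambda) \) is simply \( \operatorname{id}\otimes A \), which is invertible. On the other hand, the Cartan homotopy identity \( L_A = d\iota_A + \iota_A d \) makes \( L_A \) nullhomotopic, and since \( L_A \) commutes with \( d \) so does \( L_A^{-1} \); hence any cocycle \( \omega \) can be rewritten as \( \omega = L_A^{-1}(d\iota_A\omega) = d(L_A^{-1}\iota_A\omega) \), which is a coboundary. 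Therefore \( H^p(\la,V(\lambda)) = 0 \) for every \( \lambda\neq 0 \), and the lemma follows.
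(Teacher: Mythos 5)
Your proof is correct, and its overall architecture matches the paper's: in both cases one shows that \( \Lambda^3\la^* \) survives into \( H^3(\g) \), so \( b_3(\g)=0 \) forces \( \dim\la\leqslant 2 \). For case (i) the paper inspects the differential directly (\( d\la^*=0 \) and \( d\lk^*\subset\Lambda^2\lk^*+\la^*\wedge\lk^* \), so no element of \( \Lambda^3\la^* \) is exact); your retraction argument \( s^*\circ\pi^*=\mathrm{id} \) is the same fact packaged functorially. The genuine divergence is in case (ii), in how \( E_2^{1,1}=H^1(\la,H^1(\lk)) \) is killed: the paper chooses a basis element \( \Lel A_1 \) of \( \la \) acting invertibly on \( H^1(\lk) \) and solves the cocycle equation explicitly (\( f_j=A_jA_1^{-1}f_1 \)), establishing vanishing only at the \( p=1 \) spot, whereas you prove the stronger general lemma that \( H^p(\la,V)=0 \) for all \( p \) whenever \( V^{\la}=0 \), via the Cartan homotopy \( L_A=d\iota_A+\iota_A d \) applied to an invertible, hence nullhomotopic-yet-bijective, chain map. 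Your lemma is more conceptual and shows that the entire rows \( q=1,2 \) of the \( E_2 \)-page vanish, not just the entries needed; the price is the scalar extension and generalised-eigenspace bookkeeping, and the step \emph{the hypothesis eliminates the zero eigenvalue} silently uses the same Engel-type fact about commuting nilpotent operators having a common kernel vector that the paper spells out in the proof of \cref{prop:3-4-trivial}, so you should cite or reproduce that argument. A cosmetic point: \( \la=\g/\g' \) is abelian automatically, so the appeal to solvability via \cref{lem:b3} is not needed for that purpose.
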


\begin{proof}
  When \( \g \) is split, we have \( \g = \la + \lk \), \( [\la,\la] =
  0 \), and so the differential \( d \) of \( \g \) satisfies \(
  d\la^* = 0 \) and \( d\lk^* \subset \Lambda^2\lk^* +
  \la^*\wedge\lk^* \).  In particular, no element of \( \Lambda^k\la^*
  \) is exact, and there is an injection \( \Lambda^k\la^* \to H^k(\g)
  \) for each~\( k \).  Thus \( b_3(\g) = 0 \) implies \(
  \Lambda^3\la^* = 0 \) and \( \la \) has dimension at most two, as
  required.

  For the second case of the \namecref{prop:codim-2}, we consider the
  spectral sequence.  Choose a basis \( \Lel A_1,\dots,\Lel A_r \)
  for~\( \la \). The proof of \cref{prop:3-4-trivial} shows that \(
  H^q(\lk)^{\g} = \{0\} \) is equivalent to common generalised
  eigenspace \( E(0) = \bigcap_{i=1}^r \ker(A_i^n) \) of the induced
  operators \( A_i \) on \( H^q(\lk) \) being zero.  This implies that
  some linear combination \( \Lel A \) of the \( \Lel A_i \) acts
  invertibly on \( H^q(\lk) \).  We may thus choose our basis \( \Lel
  A_i \) so that \( A_1 \) acts invertibly on the given \( H^q(\lk)
  \).

  Now consider the cohomology of
  \begin{equation*}
 H^q(\lk)
  \overset{d_1}{\longrightarrow} \la^*\otimes H^q(\lk)
  \overset{d_1}{\longrightarrow} \Lambda^2\la^*\otimes H^q(\lk).
\end{equation*}
The first map is given by \( v \mapsto (A_1v,\dots,A_rv) \), the
second by \( (f_1,\dots,f_r) \mapsto (A_if_j-A_jf_i) \).  For \(
\mathbf f = (f_1,\dots,f_r) \in \ker d_1 \subset \la^*\otimes H^q(\lk)
\), we may write \( v = A_1^{-1}f_1 \).  Then \( A_1f_j-A_jf_1 = 0 \),
implies that \( f_j = A_1^{-1}(A_jf_1) = A_jv \) and that \( \mathbf f
\in \im d_1 \).  Thus the \( d_1 \)-cohomology vanishes at the second
step.

  Under the hypotheses of the \namecref{prop:codim-2}, the \( E_2
  \)-page of the spectral sequence for \( \g \) with respect the
  derived algebra \( \lk \) is now
  \begin{equation*}
    \begin{tikzpicture}
      \matrix (Etwo) [matrix of math nodes,nodes in empty cells,column
      sep=4ex,nodes={minimum width=6ex}] {
      \vdots & \cdots \\
      3 & H^3(\lk)^{\g} & E_2^{1,3}& E_2^{2,3} & E_2^{3,3} &\cdots \\
      2 & 0 & 0 & E_2^{2,2} & E_2^{3,2} &\cdots \\
      1 & 0 & 0 & E_2^{2,1} & E_2^{3,1} &\cdots \\
      0 & \strut\bR & \la^*& \Lambda^2\la^* & \Lambda^3\la^*&\cdots \\
      & 0 & 1 & 2 & 3 & \strut\cdots \\
      }; \draw (Etwo-1-1.north east) |- (Etwo-7-6.north east);
      \draw[->] (Etwo-4-3) -- (Etwo-5-5); \draw[dashed,->]
      (Etwo-3-2.east) parabola (Etwo-5-5.north west);
    \end{tikzpicture}
  \end{equation*}
  with the solid arrow to \( \Lambda^3\la^* \) representing \( d_2 \)
  and the dashed arrow indicating \( d_3 \).  This gives that \(
  \Lambda^3\la^* = E_2^{3,0} = E_3^{3,0} = E_\infty^{3,0} \) is a
  summand of \( H^3(\g) \).  Hence \( b_3(\g) \geqslant
  \dim\Lambda^3\la^* \) and \( b_3(\g) = 0 \) implies that \( \dim\la
  \leqslant 2 \), as required.
\end{proof}

\begin{remark}
  Note that the condition \( H^1(\lk)^{\g} = \{0\} \) does not appear
  in \cref{thm:3-4-structure-1}.  However, when \( \lk = \g' \) is of
  codimension one, this space is automatically zero.  Indeed, suppose
  that \( \g = \bR\Lel A + \lk \) with \( \lk \) an ideal.  A non-zero
  element of \( H^1(\lk)^{\g} \) is an element \( \gamma \in \lk^* \),
  such that \( d_0\gamma = 0 \) and \( \Lel A\cdot \gamma = 0 \).
  The first condition implies that \( \gamma \) annihilates \( \lk'
  \), the second says that \( \gamma([\Lel A,\Lel K]) = 0 \) for each
  \( \Lel K \in \lk \).  Thus \( \gamma \) annihilates \( \g' \), but
  so does \( \la^* \), so \( \g' \) is at least codimension two.
\end{remark}

The Betti numbers of Lie algebras of dimension at most \( 6 \) are
given in
\textcite{Freibert-S-H:decomposable,Freibert-S-H:indecomposable}.  One
may thus use their tables to read off which algebras of dimension \( 4
\), \( 5 \) or \( 6 \) are \( (3,4) \)-trivial.
\Cref{tab:34Freibert-S-H} summarises the resulting sets of Betti
numbers that occur.  The tables confirm that \( \g' \) is of
codimension at most two in these cases.  When the codimension equals
two, i.e., \( b_1(\g) = 2 \), one sees that \( b_n(\g) = 0 \).  In
other words, these examples are not unimodular.  The next example
shows that this is no longer true in higher dimensions.

\begin{table}[htp]
  \centering
  \begin{tabular}{CC}
    \toprule
    n=\dim\g & (b_1,\dots,b_n)                               \\
    \hline
    4        & (1,0,0,0),\,(2,1,0,0)                         \\
    5        & (1,0,0,0,0),\,(2,1,0,0,0)                     \\
    6        & (1,0,0,0,0,0),\,(1,0,0,0,1,1),\,(2,1,0,0,0,0) \\
    \bottomrule
  \end{tabular}
  \caption{The sets of Betti numbers of the \((3,4)\)-trivial Lie
  algebras~\( \g \) appearing in the low-dimensional classifications of
  \cite{Freibert-S-H:decomposable,Freibert-S-H:indecomposable}.}
  \label{tab:34Freibert-S-H}
\end{table}

\begin{example}
  Consider an \( (m+2) \)-dimensional Lie algebra of the form as
  in~\cref{ex:Abelian-2} with \( T_1 \) and \( T_2 \) commuting and
  diagonalisable.  The proof of \cref{prop:3-4-trivial} shows that we
  may assume that \( T_1 \) is invertible.  The action of \( T_i \) on
  \( H^m(\lk) \) is simply the trace of the action~\( \lk^* \).  When
  \( m \geqslant 5 \), we may ensure that the cohomology conditions of
  \cref{prop:3-4-trivial} are satisfied and that each \( T_i \) acts
  trivially on \( H^m(\lk) \), for example by giving both \( T_i \)
  strictly positive eigenvalues on some \( (m-1) \)-dimensional
  subspace of~\( \lk \) and requiring \( T_i \) to be trace-free.  Now
  this trace-free condition ensures that the maps \( d_1\colon
  E_1^{p,m} \to E_1^{p+1,m} \) of the Hochschild-Serre spectral
  sequence for \( \g \) with respect to the ideal \( \lk \) are
  identically zero, so \( H^n(\g) = E_2^{2,m} = E_1^{2,m} = H^m(\lk) =
  \bR \), showing that \( \g \) is unimodular.  A concrete non-product
  unimodular \( (3,4) \)-trivial algebra of dimension \( n = m+2
  \geqslant 7 \) with \( b_1 = 2 \) is given by
  \begin{equation*}
    (0,0,13+23,14,15,\dots,(1-m).1n -2n).
  \end{equation*}
\end{example}

\printbibliography

\bigskip

\begin{small}
  \parindent0pt\parskip\baselineskip

  T.B.Madsen \& A.F.Swann

  Department of Mathematical Sciences, University of Aarhus, Ny
  Munkegade 118, Bldg 1530, DK-8000 Aarhus C, Denmark.

  \textit{and}

  CP\textsuperscript3-Origins, Centre of Excellence for Particle
  Physics Phenomenology, University of Southern Denmark, Campusvej 55,
  DK-5230 Odense M, Denmark.

  \textit{E-mail}: \url{tbmadsen@imf.au.dk}, \url{swann@imf.au.dk}
\end{small}

\end{document}